\documentclass[12pt]{article}
\usepackage{amsmath}
\usepackage{graphicx}
\usepackage{enumerate}
\usepackage{natbib}
\usepackage{url} 
\usepackage{xcolor}
\usepackage{enumitem}
\newcommand{\blind}{1}

\addtolength{\oddsidemargin}{-.5in}%
\addtolength{\evensidemargin}{-1in}%
\addtolength{\textwidth}{1in}%
\addtolength{\textheight}{1.7in}%
\addtolength{\topmargin}{-1in}%

\DeclareMathOperator{\Cov}{Cov}
\DeclareMathOperator{\sign}{sign}
\usepackage{amsthm}
\newtheorem{theorem}{Theorem}[section]
\newtheorem{lemma}[theorem]{Lemma}
\newtheorem{corollary}[theorem]{Corollary}

\newtheorem{proposition}[theorem]{Proposition}
\newtheorem{remark}[theorem]{Remark}
\newtheorem{assumption}[theorem]{Assumption}
\newtheorem{example}[theorem]{Example}

\usepackage{float}
\usepackage{mathrsfs}
\usepackage{amsfonts}
\usepackage{bbm}
\usepackage{bm}
\usepackage{amsmath}
\usepackage{amssymb}
\usepackage{graphicx}
\usepackage{tabularx}
\usepackage{float}

\pdfminorversion=4
\begin{document}

\def\spacingset#1{\renewcommand{\baselinestretch}%
{#1}\small\normalsize} \spacingset{1}


\if1\blind
{
  \title{\bf Asymptotic Behavior of Adversarial Training Estimator under $\ell_\infty$-Perturbation}
  \author{Yiling Xie\\
    School of Industrial and Systems Engineering,\\  Georgia Institute of Technology,\\
    and \\
    Xiaoming Huo \\
    School of Industrial and Systems Engineering,\\  Georgia Institute of Technology.}
  \maketitle
} \fi

\if0\blind
{
  \bigskip
  \bigskip
  \bigskip
  \begin{center}
    {\LARGE\bf Asymptotic Behavior of Adversarial Training Estimator under $\ell_\infty$-Perturbation}
\end{center}
  \medskip
} \fi

\bigskip
\sloppy
\spacingset{1.9}
\begin{abstract}
Adversarial training has been proposed to protect machine learning models against adversarial attacks. 
This paper focuses on adversarial training under $\ell_\infty$-perturbation, which has recently attracted much research attention.
The asymptotic behavior of the adversarial training estimator is investigated in the generalized linear model.
The results imply that the asymptotic distribution of the adversarial training estimator under $\ell_\infty$-perturbation could put a positive probability mass at $0$ when the true parameter is $0$, providing a theoretical guarantee of the associated sparsity-recovery ability.
Alternatively, a two-step procedure is proposed---adaptive adversarial training, which could further improve the performance of adversarial training under $\ell_\infty$-perturbation. 
Specifically, the proposed procedure could achieve asymptotic variable-selection consistency and unbiasedness.
Numerical experiments are conducted to show the sparsity-recovery ability of adversarial training under $\ell_\infty$-perturbation and to compare the empirical performance between classic adversarial training and adaptive adversarial training. 
\end{abstract} 
\section{Introduction}\label{introductionsection}
Modern machine-learning models are
susceptible to adversarial attacks on their inputs. The adversarial training procedure has been proposed to remedy this issue by minimizing the empirical worst-case loss under a given magnitude of perturbation for each observation \citep{Goodfellow2015,madry2018towards}.
 In this paper, we consider the adversarial training problem formulated as the following optimization problem:
\begin{equation}\label{intro}\min_{\beta}\mathbb{E}_{\left( \bm{X},Y\right)\sim\mathbb{P}_n}\left[ \max_{\left\Vert\Delta\right\Vert\leq \delta} L\left(\bm{X}+\Delta,Y,\beta\right)\right],\end{equation}
where $\Vert\Delta\Vert$ denotes the norm of the perturbation $\Delta$, $\delta$ denotes the perturbation magnitude, $\bm{X}\in\mathbb{R}^d$ and $Y\in\mathbb{R}$ denote the input variable and response variable, $L$ is the loss function, $\mathbb{P}_n$ is the empirical distribution of $(\bm{X}, Y)$, and  $\mathbb{P}_n$  is constructed by $n$ independent and identically distributed samples drawn from the ground-truth data generating distribution $P_\ast$.

If the norm of the perturbation $\Vert\cdot\Vert$ is chosen as $\ell_p$ norm, we call the problem \eqref{intro} adversarial training under $\ell_p$-perturbation.
 Compared with other types of norms of the perturbation, adversarial training under $\ell_\infty$-perturbation is of particular interest.
 Intuitively, applying $\ell_\infty$-perturbation imposes perturbations uniformly in all components of the input variable.
This perturbation choice is widely used in the literature \citep{madry2018towards,schmidt2018adversarially,yin2019rademacher,chen2020more,guo2020connections,xing2021generalization}. 
For example, 
\citet{yin2019rademacher} studies the generalization ability of the adversarial training under $\ell_\infty$-perturbation, and
\citet{schmidt2018adversarially,chen2020more} investigate the generalization gap between
standard and adversarial robust models under $\ell_\infty$-perturbation. 
In addition, the $\ell_\infty$-perturbed adversarial training has been shown to enjoy desirable properties. 
For instance, \cite{xie2024highdimensional} proves that  $\ell_\infty$-perturbed adversarially-trained estimator can achieve the minimax optimal convergence rate in the high-dimensional linear regression. 
\cite{ribeiro2023regularization} points out that
the adversarial training under $\ell_\infty$-perturbation could produce sparse solutions in the linear regression.
However, the existing literature does not provide a precise statistical analysis to explain this sparsity-recovery phenomenon. Our paper aims to provide a rigorous theoretical foundation for this sparsity-recovery ability within the context of the generalized linear model, where linear regression is a special case.

In this paper, we first study the asymptotic behavior of the adversarial training estimator under the generalized linear model.
We derive the associated asymptotic distribution based on the regularization effect of the adversarial training procedure.
The results have two main implications.
Firstly, $\ell_\infty$-perturbation could help recover sparsity asymptotically while other types of perturbation could not. 
Secondly, the asymptotic sparsity-recovery ability of the $\ell_\infty$-perturbation can only be achieved when the perturbation magnitude is of the order $1/\sqrt{n}$, i.e., $\delta = \eta/n^{\gamma}$, where $\eta > 0$ and $\gamma = 1/2$.
More specifically, if we choose $\gamma=1/2$, the asymptotic distribution of the $\ell_\infty$-perturbed adversarial-trained estimator has a positive probability at $0$  when the true parameter is $0$.

To further improve the performance of adversarial training, we propose a novel two-step procedure---adaptive adversarial training.
In this approach, we first compute the empirical risk minimization estimator and then use the empirical risk minimization estimator as the weights added to the $\ell_\infty$-perturbation.
The estimator obtained from the proposed procedure can achieve variable-selection consistency in the asymptotic sense, while the classic adversarial training estimator can not.
Moreover, the adaptive adversarial training estimator is asymptotically unbiased when $1/2<\gamma<1$.

Our theoretical results are validated by numerical experiments. 
Specifically, we run adversarial training and adaptive adversarial training in linear and logistic regression on both synthetic and real-world datasets. 
The associated estimator path performs different patterns when the order of $\ell_\infty$-perturbation is chosen as $0<\gamma<1/2,\gamma=1/2,\gamma>1/2$, respectively, among which the choice $\gamma=1/2$ has the best performance in general.
We also compare the performance of classic adversarial training with that of adaptive adversarial training. 
It is observed that the proposed adaptive adversarial training has superior practical performance in estimation accuracy and sparsity-recovery.
 
\subsection{Related Work}
In the recent literature of adversarial training, \citet{javanmard2022precise, taheri2023asymptotic,hassani2024curse} investigate the asymptotic accuracy of adversarial training under the high-dimensional regime. 
However, our work focuses on the $\ell_\infty$-perturbed adversarial training estimator, analyzing its asymptotic distribution and relevant properties, such as sparsity recovery and variable-selection consistency, in fixed-dimensional settings.
In addition to the aforementioned literature, several other studies, such as \cite{dobriban2023provable, trillos2023multimarginal}, have explored the statistical properties of adversarially-trained classification problems, including Bayes-optimality and geometric structure, which differ from the focus of this paper.

Some studies have found that the adversarial training procedure is equivalent to $\infty$-Wasserstein distributionally robust optimization \citep{staib2017distributionally,gao2024wasserstein}. While there is extensive research on finite-order Wasserstein distributionally robust optimization \citep{blanchet2022confidence, gao2022distributionally,blanchet2023statistical}, the statistical properties of adversarial training estimator have not been thoroughly examined. 
Our results are consistent with the associated theories in finite-order Wasserstein distributionally robust optimization. However, we emphasize the statistical insights derived from the corresponding asymptotic distribution and propose potential improvements from a statistical perspective.

Adversarial training under $\ell_\infty$-perturbation is related to the well-known statistical procedure LASSO \citep{ribeiro2023regularization,xie2024highdimensional}. 
The regularization effect of adversarial training will be demonstrated to depend on the $\ell_1$-norm of the parameter.
In this regard, it will also be shown that the asymptotic distribution of the adversarial training estimator under $\ell_\infty$-perturbation follows a formulation that aligns with that of the LASSO estimator \citep{fu2000asymptotics}, while incorporating distinct gradient term specific to adversarial robustness.
Additionally, the proposed adaptive adversarial training connects with the adaptive LASSO \citep{zou2006adaptive}.
Specifically, adaptive adversarial training applies weights in the $\ell_\infty$-perturbation, while adaptive LASSO applies weights in the $\ell_1$-norm regularization term. 
Both weighting strategies lead to similar properties in the resulting estimator, achieving variable-selection consistency and reducing bias.
\subsection{Notations and Definitions}
$[\bm{x}]_i$ denotes the $i$th component of vector $\bm{x}$. 
$[\bm{x}]_\mathcal{I}$ denotes the sub-vector of $\bm{x}$ with $\mathcal{I}$ as the index set.
$\bm{1}$ denotes the vector of all ones.
$\bm{I}_d$ denotes the $d$-dimensional identity matrix.
$\sign([\bm{x}]_i)$ denotes the sign of $[\bm{x}]_i$ (where we let $\sign(0)=0$),
 and accordingly we let $\sign(\bm{x})=(\sign([\bm{x}]_1),...,\sign([\bm{x}]_d))^\top$.
If $\bm{x}=(x_1,...,x_d)^\top$ and $\bm{y}=(y_1,...,y_d)^\top$, then we have the following denotations: $\vert\bm{x}\vert=(\vert x_1\vert,...,\vert x_d\vert)^\top$, $\bm{x}^{-1}=(1/x_1,...,1/x_d)^\top$, $\bm{x}\otimes\bm{y}=(x_1y_1,...,x_dy_d)^\top$, $\langle\bm{x},\bm{y}\rangle=\bm{x}^\top\bm{y}=x_1y_1+...+x_dy_d$.
We use $\Vert\bm{x}\Vert_p$ to denote the $\ell_p$-norm of the vector $\bm{z}$, i.e., $\Vert\bm{x}\Vert_p^p=\sum_{j=1}^d\vert x_j \vert^p, 1\leq p<\infty, \Vert\bm{x}\Vert_\infty=\max_{1\leq j\leq d} \vert x_j\vert.$
For a given general  norm $\Vert\cdot\Vert$, we denote the associated dual norm by $\Vert\cdot\Vert_\ast$, which is defined by $\Vert \bm{y}\Vert_\ast=\max_{\Vert \bm{x}\Vert\leq 1} \bm{x}^{\top}\bm{y}$. 
For a matrix $M$, we use $\Vert M\Vert$ to denote the matrix norm of $M$ induced by the vector norm $\Vert \cdot\Vert$ , i.e., $\|M\|
= \max_{\bm{x} \neq 0} \|M\bm{x}\|/\|\bm{x}\|.$
``$\Rightarrow$'' denotes ``converge in distribution'' and ``$\to_p$'' denotes ``converge in probability''. 
``$O_p(1)$'' denotes that the term is ``bounded in probability''.
``$o_p(1)$'' denotes  that the term  ``converges to $0$ in probability''.
If we denote a term $R$ as ``$o(\delta^k)$'' where $k>0$, it means that $\lim_{\delta\to 0}R/\delta^k=0$.
 For the function $g(\bm{x},y,\beta)$, we use $\nabla_{\beta}g(\bm{x},y,\beta^\ast)$ to denote the value of $\nabla_{\beta}g(\bm{x},y,\beta)$ at $\beta=\beta^\ast$, i.e., $\nabla_{\beta}g(\bm{x},y,\beta^\ast)=\nabla_{\beta}g(\bm{x},y,\beta)\big\vert_{\beta=\beta^\ast}$. A similar notation also applies to the second-order gradients.
$\mathbb{I}(\cdot)$ denotes the indicator function.
\subsection{Organization of this Paper}
The remainder of this paper is organized as follows. 
In Section \ref{reg}, we introduce the regularization effect of adversarial training. 
In Section \ref{asymsection}, we investigate the asymptotic behavior of the adversarial training estimator under $\ell_\infty$-perturbation.
In Section \ref{adaptivesection}, we propose adaptive adversarial training and analyze its desirable statistical properties.
Numerical experiments are conducted and analyzed in Section \ref{numsection}.
We discuss some future work in Section \ref{dis}.
The proofs are relegated to the Supplementary Material whenever possible.
\section{Regularization Effect of Adversarial Training}\label{reg}
This section discusses the regularization effect of the adversarial training procedure.

In the rest of the paper, we adopt the convention that a matrix-valued function $g:\mathbb{R}^d\to \mathbb{R}^{d_1\times d_2}$ is uniformly continuous if each entry of $g$, as a function from $\mathbb{R}^d$ to $\mathbb{R}$, is uniformly continuous.  We state the regularization effect of adversarial training in the following proposition.
\begin{proposition}[Regularization Effect]\label{prop1}
If the function $h:\mathbb{R}^d\to\mathbb{R}$ is differentiable, $\nabla h$ is uniformly continuous, and $\mathbb{E}_{\bm{Z}\sim P}\left[ \Vert\nabla h(\bm{Z})\Vert_\ast\right]<\infty$,
then we have that
\begin{equation}\label{regularization}\mathbb{E}_{\bm{Z}\sim P}\left[\max _{\Vert\Delta\Vert\leq \delta}h(\bm{Z}+\Delta)\right]=\mathbb{E}_ {\bm{Z}\sim P}\left[ h(\bm{Z})\right]+ \delta \mathbb{E}_{\bm{Z}\sim P}\left[ \Vert\nabla h(\bm{Z})\Vert_\ast\right]+o(\delta)\end{equation}
as $\delta\to 0$.
\end{proposition}

\begin{remark}
Compared with the adversarial training problem \eqref{intro} introduced in Section \ref{introductionsection}, Proposition \ref{prop1} focuses on a more general loss function $h$ and decision variable $\bm{Z}$.
\end{remark}
Proposition \ref{prop1} demonstrates that the adversarial training worst-case loss, $\mathbb{E}_{\bm{Z} \sim P}\left[\max_{\Vert\Delta\Vert \leq \delta} h(\bm{Z} + \Delta)\right]$, can be reformulated as the sum of the classical non-adversarial loss, $\mathbb{E}_{\bm{Z} \sim P}\left[h(\bm{Z})\right]$, the expectation of the dual norm of the gradient, $\mathbb{E}_{\bm{Z} \sim P}\left[\Vert\nabla h(\bm{Z})\Vert_\ast\right]$, and a higher-order residual, $o(\delta)$.
Regardless of the higher-order residual, the term $\delta \mathbb{E}_{\bm{Z} \sim P}\left[\Vert\nabla h(\bm{Z})\Vert_\ast\right]$ can be interpreted as a regularization imposed on the non-adversarial loss. 
This additional gradient regularization term is caused by the computation of the worst-case loss.
 Since a smaller gradient can lead to solutions that are less affected by input perturbations, the gradient regularization term can be considered as a mechanism that enhances robustness against adversarial input attacks. We refer to this phenomenon as the regularization effect of adversarial training.
The regularization effect has also been identified in the context of the distributionally robust optimization problem \citep{gao2024wasserstein}.
In particular, we will focus on the regularization effect of adversarial training within the generalized linear model,  as illustrated in the next section.
Moreover, the absence of the ``sup" in the reformulation \eqref{regularization} enables the analysis of the asymptotic distribution of the adversarial training estimator.

\section{Asymptotic Behavior in Generalized Linear Model}\label{asymsection}
This section analyzes the asymptotic behavior of the adversarial training estimator under $\ell_\infty$-perturbation in the generalized linear model.

In the generalized linear model, the response variable $Y$ is assumed to follow some distribution in the exponential family.
Common examples include the Bernoulli distribution for 
\(Y \in \{-1,1\}\) in logistic regression, the Poisson distribution for 
\(Y \in \{0,1,2,\ldots\}\) in Poisson regression, and the normal distribution 
for \(Y \in \mathbb{R}\) in linear regression.
The relationship between the response variable \(Y\) and the input variable \(\bm{X}\) is characterized 
through a link function \(G\). Specifically, if we denote the (nonzero) 
ground-truth parameter by \(\beta^* \in \mathbb{R}^d\), then we have that $G(\mathbb{E}[Y|\bm{X}=\bm{x}])=\langle \bm{x},\beta^{\ast}\rangle.$
Examples of \(G\) include the logit function (logistic regression), the log 
function (Poisson regression), and the identity function (linear regression).
In practice, the ground-truth parameter \(\beta^*\) is estimated using maximum likelihood estimation, resulting in a loss function, denoted with slight abuse of notation, as $L(\bm{x},y,\beta)=L(\langle\bm{x},\beta\rangle,y)$.

The following corollary could be immediately obtained by Proposition \ref{prop1} and shows the regularization effect of the adversarial training procedure under $\ell_\infty$-perturbation in the generalized linear model.
\begin{corollary}\label{lemma}
If the function $L(f,y):\mathbb{R}\times \mathbb{R}\to\mathbb{R}$ is differentiable w.r.t. the first argument,  and $L^{\prime}(f,y)$ is uniformly continuous w.r.t. the first argument, where $L^\prime(f,y)$ denotes the first-order derivative of $f$ taken w.r.t. the first argument, then we have that 
\begin{equation}\label{regularizationeffect}\mathbb{E}_{\mathbb{P}_n}\left[\max _{\Vert\Delta\Vert_{\infty}\leq \delta}L(\langle \bm{X}+\Delta,\beta\rangle,Y)\right]=\mathbb{E}_{\mathbb{P}_n}\left[ L(\langle\bm{X},\beta\rangle,Y)\right]+\delta \Vert\beta\Vert_1\mathbb{E}_{\mathbb{P}_n}\left[ \left\vert L^\prime(\langle\bm{X},\beta\rangle,Y)\right\vert\right]+o(\delta)\end{equation}
 holds as $\delta\to 0$ for $\beta\in B$, where $B$ is some compact subset of $\mathbb{R}^d$.
\end{corollary}
Corollary \ref{lemma} implies that the empirical average worst-case loss in adversarial training,  $\mathbb{E}_{\mathbb{P}_n}\left[\max _{\Vert\Delta\Vert_{\infty}\leq \delta}L(\langle \bm{X}+\Delta,\beta\rangle,Y)\right]$, can be reformulated as the sum of the empirical average loss, $\mathbb{E}_{\mathbb{P}_n}\left[ L(\langle\bm{X},\beta\rangle,Y)\right]$, the regularization term,
 $\delta \Vert\beta\Vert_1\mathbb{E}_{\mathbb{P}_n}\left[ \left\vert L^\prime(\langle\bm{X},\beta\rangle,Y)\right\vert\right]$, and a high-order residual, $o(\delta)$. 
 The regularization term is the product of the perturbation magnitude $\delta$, the parameter $\ell_1$-norm $\Vert\beta\Vert_1$ and the empirical average of the absolute value of the loss gradient $\mathbb{E}_{\mathbb{P}_n}\left[ \left\vert L^\prime(\langle\bm{X},\beta\rangle,Y)\right\vert\right]$. 
Notably, the gradient regularization can improve the robustness towards adversarial input perturbations \citep{drucker1992improving} while the $\ell_1$-norm regularization can reduce the model complexity and improve the robustness towards overfitting \citep{hastie2015statistical}.
Hence, the regularization of the $\ell_\infty$-pertured adversarial training brings robustness against both adversarial perturbations and overfitting.

Taking advantage of the regularization effect of adversarial training, we could obtain the asymptotic behavior of the adversarial training estimator in the generalized linear model. In particular, the associated estimator is defined by the following:
\begin{equation}\label{advestimator}\beta^n\in\arg\min_{\beta\in B}\mathbb{E}_{\mathbb{P}_n}\left[\max _{\Vert\Delta\Vert_{\infty}\leq \delta_n}L(\langle \bm{X}+\Delta,\beta\rangle,Y)\right],\end{equation}
where $B$ is a convex compact subset of $\mathbb{R}^d$, and $\delta_n=\eta/n^\gamma$, $\eta,\gamma>0$.

We first clarify several assumptions before introducing our main theorem.
     \begin{assumption}\label{assume1main} In problem \eqref{advestimator}, for the loss function $L$, the ground-truth parameter $\beta^\ast\not=\bm{0}$, and true underlying distribution $P_\ast$, we assume the following conditions hold:
    \begin{itemize}
        \item The ground-truth parameter $\beta^\ast$ is an interior point of $B$.
        \item The loss function $L(f,y)$ is twice differentiable w.r.t. the first argument.
        
        \item The function $L^{\prime\prime}(f,y)$ is uniformly continuous w.r.t. the first argument, where  $L^{\prime\prime}(f,y)$ denotes the second-order derivative of $L$ taken w.r.t. the first argument.
        
        \item The loss function $L(\langle \bm{x},\beta\rangle,y)$ is convex w.r.t. $\beta\in B$.
        \item  The matrix $H=\mathbb{E}_{P_\ast}\left[ \nabla_{\beta}^2 L(\langle \bm{X},\beta^\ast\rangle,Y)\right]$ is nonsingular.
        \item The equation $P_\ast\left(L^\prime(\langle \bm{X},\beta^\ast\rangle,Y)=0\right)=0$ holds.
        \item The first-order condition $\mathbb{E}_{P_\ast}[\nabla_{\beta} L(\langle \bm{X},\beta^\ast\rangle,Y)]=\bm{0}$ holds.
        \item The inequalities $\mathbb{E}_{P_\ast}\left[ \left\vert L^\prime(\langle\bm{X},\beta^\ast\rangle,Y)\right\vert\right]<\infty$, 
$\mathbb{E}_{P_\ast}\left[ \Vert\nabla_\beta L(\langle \bm{X},\beta^\ast\rangle,Y)\Vert^2_2\right]<\infty$,\\
$\mathbb{E}_{P_\ast}\left[\Vert\nabla_{\beta}L^\prime(\langle\bm{X},\beta^\ast\rangle,Y)\Vert_2\right]<\infty$, and $\mathbb{E}_{P_\ast}\left[\Vert \nabla_{\beta}^2L(\langle \bm{X},\beta^\ast\rangle,Y)\Vert_2\right]<\infty$ hold.
    \end{itemize}
\end{assumption}
In particular, we claim that conditions in Assumption \ref{assume1main} could be satisfied by the linear regression and the logistic regression in the following propositions.
\begin{proposition}\label{assumelinear}
    In linear regression,  if there does not exist nonzero vector $\alpha$ such that $P_\ast(\alpha ^\top \bm{X}=0)=1$, the ground-truth parameter $\beta^\ast\not=\bm{0}$ is an interior point of $B$, and inequalities $\mathbb{E}_{P_\ast}[\Vert \bm{X}\Vert^2_2\mathrm{Var}_{P_\ast}(Y\vert \bm{X})]<\infty$, $\mathbb{E}_{P_\ast}[\mathrm{Var}_{P_\ast}(Y\vert \bm{X})]<\infty$,  $\mathbb{E}_{P_\ast}[\Vert \bm{X}\Vert_2^2]<\infty$ hold, then Assumption \ref{assume1main} is satisfied.
\end{proposition}
\begin{proposition}\label{assumelog}
    In logistic regression,  if there does not exist nonzero vector $\alpha$ such that $P_\ast(\alpha ^\top \bm{X}=0)=1$, the ground-truth parameter $\beta^\ast\not=\bm{0}$ is an interior point of $B$, and $\mathbb{E}_{P_\ast}[\Vert \bm{X}\Vert^2_2]<\infty$, then Assumption \ref{assume1main} is satisfied.
\end{proposition}
Equipped with Assumption \ref{assume1main}, we could derive the following theorem, which depicts the asymptotic behavior of adversarial training estimator under different choices of the perturbation magnitude $\delta_n$.
\begin{theorem}[Asymptotic Behavior] \label{asyminfty}
Under Assumption \ref{assume1main}, for the adversarial training estimator $\beta^n$ defined in \eqref{advestimator}, if the perturbation magnitude is chosen as $\delta_n=\eta/n^{\gamma}$, $\eta,\gamma>0$,
we have the following convergence as $n\to\infty$: 
 \[ n^{\min\{1/2, \gamma\}}\left( \beta^n-\beta^\ast\right)\Rightarrow \arg\min_{\bm{u}} V(\bm{u}).\]
\begin{itemize}
\item If $\gamma>1/2$, then
\[V(\bm{u})=-\mathbf{G}^\top \bm{u}+\frac{1}{2}\bm{u}^\top H\bm{u};  \]
    \item If  $\gamma=1/2$, then
\begin{equation}\label{functionV}
\begin{aligned}V(\bm{u})=&(-\mathbf{G}+\eta\Vert\beta^\ast\Vert_1 K)^\top \bm{u}+\frac{1}{2}\bm{u}^\top H\bm{u} \\
&+\eta\left(\sum_{j=1}^d \left([\bm{u}]_j \sign([\beta^\ast]_j) \mathbb{I}([\beta^\ast]_j\not=0)+\vert [\bm{u}]_j\vert \mathbb{I}([\beta^\ast]_j=0)\right)\right) r;
\end{aligned}
\end{equation}
\item If  $0<\gamma<1/2$, then
\[\hspace{-2em}V(\bm{u})=\eta \Vert\beta^\ast\Vert_1  K^\top \bm{u}+\frac{1}{2}\bm{u}^\top H\bm{u} +\eta\left(\sum_{j=1}^d\left([\bm{u}]_j \sign([\beta^\ast]_j) \mathbb{I}([\beta^\ast]_j\not=0)+\vert [\bm{u}]_j\vert \mathbb{I}([\beta^\ast]_j=0)\right)\right) r; \]
\end{itemize}
where  $\mathbf{G}\sim \mathcal{N}(\bm{0},\Sigma)$ with covariance matrix $\Sigma=\Cov_{P_\ast}(\nabla_\beta L(\langle \bm{X},\beta^\ast\rangle,Y))$, $r=\mathbb{E}_{P_\ast}\left[ \left\vert L^\prime(\langle\bm{X},\beta^\ast\rangle,Y)\right\vert\right]$, $K=\mathbb{E}_{P_\ast}\left[\sign(L^\prime (\langle \bm{X},\beta^\ast\rangle,Y)) \nabla_\beta  L^\prime (\langle \bm{X},\beta^\ast\rangle,Y)\right]$, and $H=\mathbb{E}_{P_\ast}\left[ \nabla_{\beta}^2L(\langle \bm{X},\beta^\ast\rangle,Y)\right]$.
\end{theorem}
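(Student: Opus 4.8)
The plan is to follow the now-classical "argmin of a convex-process limit" strategy of Pollard and Knight/Fu that underlies the asymptotic theory of LASSO-type estimators. First I would define the random convex objective whose minimizer is $u_n := n^{\min\{1/2,\gamma\}}(\beta^n-\beta^\ast)$. Using Corollary~\ref{lemma} to replace the worst-case loss by the empirical loss plus the explicit regularizer $\delta_n\Vert\beta\Vert_1\,\mathbb{E}_{\mathbb{P}_n}[|L'(\langle\bm{X},\beta\rangle,Y)|]$ up to $o(\delta_n)$, I would set
\[
V_n(u)=n^{\alpha}\Big(\mathbb{E}_{\mathbb{P}_n}\big[\sup_{\Vert\Delta\Vert_\infty\le\delta_n}L(\langle\bm{X}+\Delta,\beta^\ast+n^{-\alpha/\,?}u\rangle,Y)\big]-\mathbb{E}_{\mathbb{P}_n}\big[\sup_{\Vert\Delta\Vert_\infty\le\delta_n}L(\langle\bm{X}+\Delta,\beta^\ast\rangle,Y)\big]\Big),
\]
where $\alpha=\min\{1/2,\gamma\}$ and the reparametrization is $\beta=\beta^\ast+n^{-\alpha}u$; then $u_n=\arg\min_u V_n(u)$, and $V_n$ is convex in $u$ by the convexity assumption on $L$. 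The goal is to show $V_n(u)\Rightarrow V(u)$ in the sense of finite-dimensional distributions (or uniformly on compacts), and then invoke the convexity lemma (e.g. Geyer, or Knight--Fu, or Kato) which upgrades pointwise convergence of convex functions to convergence of their argmins, giving $u_n\Rightarrow\arg\min_u V(u)$.

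The key steps, in order, are: (1) a Taylor expansion of the empirical-loss increment, $n^{\alpha}(\mathbb{E}_{\mathbb{P}_n}[L(\langle\bm{X},\beta^\ast+n^{-\alpha}u\rangle,Y)-L(\langle\bm{X},\beta^\ast\rangle,Y)])$, which by the first-order condition $\nabla_\beta\mathbb{E}_{P_\ast}[L]=0$, a CLT for the empirical score, and a LLN for the empirical Hessian yields $-\mathbf{G}^\top u+\tfrac12 u^\top Hu+o_p(1)$, with $\mathbf{G}\sim\mathcal N(0,\Sigma)$ and $H$ as defined; this produces the $\gamma>1/2$ case where the regularizer is asymptotically negligible ($n^\alpha\delta_n=\eta n^{1/2-\gamma}\to0$). (2) Expansion of the regularizer term: writing $g(\beta)=\Vert\beta\Vert_1\,\mathbb{E}_{\mathbb{P}_n}[|L'(\langle\bm{X},\beta\rangle,Y)|]$, I would expand $n^\alpha\delta_n(g(\beta^\ast+n^{-\alpha}u)-g(\beta^\ast))$. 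The $\ell_1$-norm contributes, via the standard identity $|\beta^\ast_j+n^{-\alpha}u_j|-|\beta^\ast_j|=n^{-\alpha}u_j\sign(\beta^\ast_j)$ when $\beta^\ast_j\neq0$ and $=n^{-\alpha}|u_j|$ when $\beta^\ast_j=0$, the term $\eta(\sum_j(u_j\sign(\beta^\ast_j)\mathbb I(\beta^\ast_j\neq0)+|u_j|\mathbb I(\beta^\ast_j=0)))r$ with $r=\mathbb{E}_{P_\ast}[|L'(\langle\bm{X},\beta^\ast\rangle,Y)|]$, using $\mathbb{E}_{\mathbb{P}_n}[|L'|]\to_p r$. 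The smooth factor $\mathbb{E}_{\mathbb{P}_n}[|L'(\langle\bm{X},\beta\rangle,Y)|]$, differentiated in $\beta$ at $\beta^\ast$ (here I need the condition $P_\ast(L'(\langle\bm{X},\beta^\ast\rangle,Y)=0)=0$ so that $|L'|$ is a.s. differentiable and the derivative is $\sign(L')\nabla_\beta L'$), contributes the linear term $\eta K^\top u$ with $K=\Vert\beta^\ast\Vert_1\,\mathbb{E}_{P_\ast}[\sign(L')\nabla_\beta L']$. When $\gamma=1/2$, $n^\alpha\delta_n=\eta$ is exactly constant, so both contributions survive and I get \eqref{functionV}; when $\gamma<1/2$, $n^\alpha\delta_n=\eta n^{1/2-\gamma}\to\infty$, so after dividing by this diverging factor the quadratic and noise terms vanish relative to the regularizer — more carefully, one reparametrizes so the dominant balance is between the deterministic linear+quadratic pieces, yielding $V(u)=\eta K^\top u+\tfrac12 u^\top Hu+\eta(\cdots)r$. (3) Handle the $o(\delta_n)$ residual from Corollary~\ref{lemma} uniformly: since $n^\alpha\cdot o(\delta_n)=o(n^\alpha\delta_n)$ and on compact $u$-sets the perturbation of $\beta$ is $O(n^{-\alpha})\to0$, a uniform-in-$u$ version of the $o(\delta)$ bound (which requires a mild equicontinuity/domination argument, e.g. dominated gradients on the compact $B$) shows it is $o_p(1)$ on compacts in the first two cases and lower-order in the third. (4) Assemble and apply the convexity argmin theorem; nonsingularity of $H$ guarantees the limit $V$ has a unique minimizer, so the $\arg\min$ is well-defined and the convergence is genuine.

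The main obstacle I anticipate is Step~(2) combined with Step~(3): rigorously differentiating the non-smooth functional $\beta\mapsto\mathbb{E}_{\mathbb{P}_n}[|L'(\langle\bm{X},\beta\rangle,Y)|]$ and, more delicately, controlling the interchange of the $o(\delta_n)$ residual in Corollary~\ref{lemma} with the $n^\alpha$ scaling and the $\beta\to\beta^\ast$ limit \emph{uniformly} over compact sets of $u$. The pointwise statement of Corollary~\ref{lemma} is not by itself enough; I need that the residual is $o(\delta_n)$ uniformly over $\beta$ in a shrinking neighborhood of $\beta^\ast$, which I would obtain from a second-order Taylor bound on $\sup_{\Vert\Delta\Vert_\infty\le\delta}L(\langle\bm{X}+\Delta,\beta\rangle,Y)$ with a dominating integrable bound on the relevant second derivatives over the compact parameter set $B$ (this is where twice-differentiability and compactness of $B$ are used). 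A secondary subtlety is that when $\gamma<1/2$ the naive scaling $n^\alpha=n^{1/2}$ is not the right normalization for the regularizer; one must verify that the estimator still concentrates at rate $n^{1/2}$ (driven by $H$) and that the regularizer's diverging coefficient does not alter the rate, only the location — this follows because the regularizer is Lipschitz in $u$ with coefficient $O(\eta n^{1/2-\gamma})$ while the quadratic grows like $|u|^2$, so the minimizer stays $O_p(1)$ in $u$; making this precise is the part requiring the most care.
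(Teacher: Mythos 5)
Your overall strategy is the same as the paper's: reparametrize around $\beta^\ast$, decompose the scaled objective into the empirical-risk increment plus the scaled regularizer, use the first-order condition/CLT/LLN for the former and the directional derivative of $\Vert\cdot\Vert_1$ together with $P_\ast(L^\prime=0)=0$ for the latter, separate the three regimes by the size of $n^{1/2-\gamma}$, and finish with the Geyer/Knight--Fu convexity-argmin theorem. Two points, however, do not close as written.

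First, the residual control in your Step (3) fails precisely in the cases where the regularizer matters. With the correct normalization $V_n(u)=n^{2\alpha}\left(\Psi_n(\beta^\ast+n^{-\alpha}u)-\Psi_n(\beta^\ast)\right)$ (your $n^{\alpha}$ prefactor, taken literally, would make the quadratic term vanish), the residual of Corollary \ref{lemma} contributes $n\cdot o(\delta_n)=o(\eta\sqrt{n})$ when $\gamma=1/2$, which is not $o_p(1)$ even if the $o(\delta_n)$ is uniform in $u$ and $\beta$. You anticipate this in your ``obstacle'' paragraph, but a second-order \emph{bound} on the sup is still not enough: $n\cdot O(\delta_n^2)=O(\eta^2)$ is only $O(1)$, so one must identify the $\delta^2$-coefficient explicitly and show that its increment between $\beta^\ast+u/\sqrt{n}$ and $\beta^\ast$ vanishes. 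This is exactly what the paper's Lemma \ref{lemmaforprecisereg} provides (for small $\delta$ the inner supremum of the linear-plus-quadratic expansion over the $\ell_\infty$ ball is attained at the vertex $\sign(\beta L^\prime)$), yielding the exact second-order expansion of Lemma \ref{reglemma1} with an explicit $\delta^2$ term $P(\bm{x},\beta,y)$ and an $o(\delta^2)$ remainder; this lemma is the missing ingredient in your argument.

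Second, your treatment of $\gamma<1/2$ contradicts the statement you are proving: you assert the estimator ``still concentrates at rate $n^{1/2}$'' and that the diverging regularizer coefficient ``does not alter the rate, only the location,'' but the rate in this regime is $n^{\gamma}$, which is what $n^{\min\{1/2,\gamma\}}$ encodes. In the $\sqrt{n}$ parametrization the minimizer of $\frac12 u^\top Hu+\eta n^{1/2-\gamma}(\text{linear in }u)$ is of order $n^{1/2-\gamma}\to\infty$, so it is not $O_p(1)$ and the argmin theorem cannot be applied at that scale. The paper's Case 3 instead takes $V_n(u)=n^{2\gamma}\left(\Psi_n(\beta^\ast+n^{-\gamma}u)-\Psi_n(\beta^\ast)\right)$, under which the stochastic score contribution is $O_p(n^{\gamma-1/2})=o_p(1)$ and the limit is the deterministic objective $\eta K^\top u+\frac12 u^\top Hu+\eta(\cdots)r$. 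You should rework this case with the $n^{-\gamma}$ localization from the start.
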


\begin{remark}
We focus on the generalized linear model and the associated loss resulting from the maximum likelihood estimation. 
However, our results are applicable to any case that satisfies Assumption \ref{assume1main}, as stated in Theorem \ref{asyminfty}.
\end{remark}

\begin{remark}\label{remarklp}
Theorem \ref{asyminfty} analyzes the asymptotic behavior of adversarial training estimator under $\ell_\infty$-perturbation. 
If we consider $\ell_p$-perturbation with $p\in(1,\infty)$, under certain regularity conditions, 
the resulting estimator $\beta^n$ with perturbation magnitude $\delta_n=\eta/n^{\gamma}$, $\eta,\gamma>0$ has the following asymptotic distribution as $n\to\infty$:    
 \begin{itemize}[itemsep=-2ex]
        \item If $\gamma>1/2$, then 
        \begin{equation*} \sqrt{n}(\beta^n-\beta^\ast)\Rightarrow H^{-1}\mathbf{G};\end{equation*}
        \item If $\gamma=1/2$, then 
        \begin{equation*}\begin{aligned}
             \sqrt{n}(\beta^n-\beta^\ast)
             \Rightarrow H^{-1}\left(-\eta \nabla_{\beta}\left( \Vert\beta^\ast\Vert_q\mathbb{E}_{P_\ast}[\vert L^\prime (\langle \bm{X},\beta^\ast\rangle,Y)\vert]\right)+\mathbf{G}\right);\end{aligned}\end{equation*}
        \item If $0<\gamma<1/2$, then 
        \begin{equation*}\begin{aligned} n^{\gamma}(\beta^n-\beta^\ast)
        \Rightarrow&-\eta H^{-1}\nabla_{\beta}\left( \Vert\beta^\ast\Vert_q\mathbb{E}_{P_\ast}[\vert L^\prime (\langle \bm{X},\beta\rangle,Y)\vert]\right),\end{aligned}\end{equation*}
    \end{itemize}
    where $1/p+1/q=1$. We provide the formal statement and its corresponding proof in Appendix \ref{pperturbation}.
\end{remark}

Theorem \ref{asyminfty} classifies the asymptotic behavior of adversarial training into three cases based on the order of the perturbation magnitude. Recall that the perturbation magnitude is defined by $\delta_n=\eta/n^\gamma$. 
If $\gamma>1/2$, the asymptotic behavior of the adversarial training estimator is identical to that of the empirical risk minimization estimator, indicating that the influence of hedging against adversarial attacks vanishes. 
If $\gamma=1/2$, the adversarial training estimator converges to a non-degenerate random variable at the rate of $\sqrt{n}$. The terms $\Vert\beta^\ast\Vert_1 K$, $r$ and $\sum_{j=1}^d( [u]_j \sign([\beta^\ast]_j) \mathbb{I}([\beta^\ast]_j\not=0)+\vert [u]_j\vert \mathbb{I}([\beta^\ast]_j=0))$ resulted by the regularization term $\delta \Vert\beta\Vert_1\mathbb{E}_{\mathbb{P}_n}\left[ \left\vert L^\prime(\langle\bm{X},\beta\rangle,Y)\right\vert\right]$ in the reformulation \eqref{regularizationeffect}. These terms capture the impact of achieving adversarial robustness on the asymptotic behavior of the resulting estimator.
If $0<\gamma<1/2$, the adversarial training estimator converges to a nonrandom quantity at a slower convergence rate $n^{\gamma}$. 

\subsection{Sparsity-recovery Ability}
It has been noted in the literature that the adversarial training under $\ell_\infty$-perturbation could help recover sparsity \citep{ribeiro2023regularization}.
The following proposition, based on the asymptotic behavior of the associated estimator $\beta^n$ in Theorem \ref{asyminfty}, provides a theoretical guarantee for the sparsity-recovery ability of the adversarial training via $\ell_\infty$-perturbation under the $1/\sqrt{n}$-order of perturbation magnitude.
\begin{proposition}[Sparsity-recovery Ability]\label{positiveprob}
   Under Assumption \ref{assume1main} and the setting $\delta_n=\eta/\sqrt{n}$, $\eta>0$, for the adversarial training estimator $\beta^n$ defined in \eqref{advestimator}, if we suppose $[\beta^\ast]_1,...,[\beta^\ast]_s\not=0$ and $[\beta^\ast]_{s+1}=...=[\beta^\ast]_d=0$, then the asymptotic distribution of $(\beta^n)_2$ have a positive probability mass at $\bm{0}$, where $(\beta^n)_2=([\beta^n]_{s+1},...,[\beta^n]_d)^\top$.
\end{proposition}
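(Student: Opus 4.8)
\emph{Proof proposal.} The plan is to specialize Theorem~\ref{asyminfty} to $\gamma=1/2$ and then extract the atom at the origin from the optimality (KKT) conditions of the limiting objective $V$; the only genuine work is a Gaussian anti-concentration estimate. To avoid the notational clash with the sparsity level denoted $r$ in the statement, write $\rho:=\mathbb{E}_{P_\ast}[\,|L^\prime(\langle\bm{X},\beta^\ast\rangle,Y)|\,]$ for the constant called $r$ in Theorem~\ref{asyminfty}; the assumption $P_\ast(L^\prime(\langle\bm{X},\beta^\ast\rangle,Y)=0)=0$ gives $\rho>0$. By Theorem~\ref{asyminfty} with $\delta_n=\eta/\sqrt n$, $\sqrt n(\beta^n-\beta^\ast)\Rightarrow\widehat u:=\arg\min_u V(u)$ with $V$ as in \eqref{functionV}, and since $[\beta^\ast]_j=0$ for $j>r$ the coordinates $j>r$ enter $V$ only through the term $\eta\rho\,|u_j|$. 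Because $[\beta^\ast]_j=0$ for $j>r$, applying the continuous mapping theorem (coordinate projection) yields $\sqrt n(\beta^n)_2\Rightarrow\widehat u_2:=(\widehat u_{r+1},\dots,\widehat u_d)^\top$, so the asserted mass at the origin equals $\mathbb{P}(\widehat u_2=0)$ and it suffices to show this is positive.

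\textbf{KKT characterization.} Convexity of $L(\langle\bm{x},\beta\rangle,y)$ in $\beta$ forces $H\succeq0$, and nonsingularity of $H$ upgrades this to $H\succ0$; hence $V$ is strongly convex with the unique minimizer $\widehat u$, and its subdifferential is the gradient of its smooth part plus the separable term $\eta\rho\prod_{j>r}\partial|u_j|$. Partition $u=(u_1,u_2)\in\mathbb{R}^r\times\mathbb{R}^{d-r}$, let $H_{11},H_{21}$ be the associated blocks of $H$ (with $H_{11}\succ0$ as a principal submatrix of $H$), and set $\mathbf{G}=(\mathbf{G}_1,\mathbf{G}_2)$, $K=(K_1,K_2)$, $s_1:=\sign([\beta^\ast]_1,\dots,[\beta^\ast]_r)$. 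The smooth strongly convex map $u_1\mapsto V(u_1,0)$ is minimized at $u_1^0=H_{11}^{-1}(\mathbf{G}_1-\eta K_1-\eta\rho\,s_1)$, which is affine in $\mathbf{G}$. Since $\nabla_{u_1}V(u_1^0,0)=0$ by construction, $(u_1^0,0)$ minimizes the convex $V$ — equivalently $\widehat u=(u_1^0,0)$ — exactly on the event $0\in\partial_{u_2}V(u_1^0,0)$, i.e.
\[\bigl|\,{-}[\mathbf{G}]_j+\eta[K]_j+[H_{21}u_1^0]_j\,\bigr|\le\eta\rho\qquad(j=r+1,\dots,d).\]
Writing $A:=\bigl(\,H_{21}H_{11}^{-1}\ \big|\ -I_{d-r}\,\bigr)$ and $b:=\eta K_2-\eta H_{21}H_{11}^{-1}(K_1+\rho\,s_1)$, this event is $\{\,g\in[-\eta\rho,\eta\rho]^{d-r}\,\}$ with $g:=A\mathbf{G}+b$.

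\textbf{Gaussian box estimate and conclusion.} Since $\mathbf{G}\sim\mathcal{N}(0,\Sigma)$, we have $g\sim\mathcal{N}(b,A\Sigma A^\top)$, and $A$ has full row rank $d-r$ thanks to its $-I_{d-r}$ block. Hence whenever $\Sigma$ is nonsingular, $A\Sigma A^\top\succ0$, so $g$ admits a strictly positive Lebesgue density on $\mathbb{R}^{d-r}$; as $\eta\rho>0$ the box $(-\eta\rho,\eta\rho)^{d-r}$ is a nonempty open set, giving $\mathbb{P}(\widehat u_2=0)\ge\mathbb{P}\bigl(g\in(-\eta\rho,\eta\rho)^{d-r}\bigr)>0$, which proves the proposition. (When $\Sigma$ is rank-deficient, $g$ is supported on an affine subspace and one must in addition check that this subspace meets the box; this is automatic in the linear- and logistic-regression settings of Proposition~\ref{assumelinearlog}, where the no-collinearity hypothesis makes $\Sigma$ nonsingular.)

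\textbf{Main obstacle.} The delicate points are the KKT reduction — partially minimizing over $u_1$ and translating the vector identity $\widehat u_2=0$ into the coordinatewise subgradient inequalities, using the separability and nonsmoothness of the $\ell_1$ term — and the anti-concentration bound for $g$: straightforward once $\Sigma\succ0$ (a full-support Gaussian lands in any nonempty open box with positive probability), but in the degenerate case requiring the structural conditions on $P_\ast$ to rule out the support of $g$ missing the box.
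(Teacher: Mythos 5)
Your proposal is correct and follows essentially the same route as the paper's proof: reduce via Theorem~\ref{asyminfty} to the event that $V$ is minimized at $u_2=0$, solve the smooth block of the KKT system for $u_1$, and observe that the resulting box condition on a Gaussian vector holds with positive probability. Your extra care about the degenerate-$\Sigma$ case (checking that the support of $g$ actually meets the open box) is a small but legitimate refinement of the paper's final step, which simply asserts positivity from normality of $\mathbf{G}_1,\mathbf{G}_2$.
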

We discuss the implications of Proposition \ref{positiveprob} as follows.

We first discuss the superiority of $\ell_\infty$-perturbed adversarial training compared with other types of perturbation. 
Proposition \ref{positiveprob} indicates that the asymptotic distribution of the adversarial training estimator under $\ell_\infty$-perturbation can have a positive probability mass at $0$ if the underlying parameter equals $0$.
In contrast, as shown in Remark \ref{remarklp}, the adversarial training estimator under $\ell_p$-perturbation with $p\in(1,\infty)$ converges in distribution to the normal distribution when $\gamma\geq1/2$ and converges in distribution to a nonrandom quantity when $\gamma<1/2$.
These results demonstrate that the asymptotic distribution of the adversarial training estimator under $\ell_p$-perturbation with $p\in(1,\infty)$ can not have a positive probability mass at $0$, even if the corresponding true parameter equals to $0$. 
This indicates that the adversarial training under $\ell_\infty$-perturbation has a better theoretical performance for obtaining sparse solutions when the sparsity of the parameter is known.

 We then discuss the choice of the order of the perturbation magnitude. 
 As shown in Theorem \ref{asyminfty}, under $\ell_\infty$-perturbation, the asymptotic distribution of the adversarial training estimator is normal when $\gamma>1/2$, and the adversarial training estimator converges in distribution to a nonrandom quantity when $0<\gamma<1/2$. 
Therefore,  the asymptotic distribution of the adversarial training estimator with the setting $\gamma>1/2$ and $0<\gamma<1/2$ can not have positive probability at $0$ when the underlying parameter equals $0$. In other words, only the choice $\gamma=1/2$, i.e., $\delta_n=\eta/\sqrt{n}$,  could promise the sparsity-recovery ability of $\ell_\infty$-perturbed adversarial training.
Also, although the root-$n$ consistency could be achieved with both $\gamma = 1/2$ and $\gamma > 1/2$, the sparsity-recovery advantage can only be realized when $\gamma=1/2$.
Hence,  $\gamma=1/2$ is the optimal choice for sparsity recovery.

\section{Adaptive Adversarial Training}\label{adaptivesection}
In this section, we propose a novel two-step approach, called adaptive adversarial training, and analyze its superior asymptotic statistical properties compared with classic adversarial training procedure under $\ell_\infty$-perturbation.

We focus on the generalized linear model, and our new approach has the following two steps:
\begin{itemize}
    \item \textbf{Step 1:} Solve the problem 
    \begin{equation*}\widehat{\beta}^n\in\arg\min_{\beta\in B} \mathbb{E}_{\mathbb{P}_n}\left[ L(\langle\bm{X},\beta\rangle,Y)\right],\end{equation*}
    where $B$ is a convex compact subset of $\mathbb{R}^d$.
    \item \textbf{Step 2:} Solve the problem 
    \begin{equation}\label{step2} \widetilde{\beta}^n\in\arg\min_{\beta\in B} \mathbb{E}_{\mathbb{P}_n}\left[\max _{\Vert \widehat{\beta}^n\otimes\Delta\Vert_{\infty}\leq \delta_n}L(\langle\bm{X}+\Delta,\beta\rangle,Y)\right],\end{equation}
     where $\delta_n=\eta/n^\gamma$,  $\eta,\gamma>0$, and output the estimator $\widetilde{\beta}^n$.
     Recall $\widehat{\beta}^n\otimes\Delta=([\widehat{\beta}^n]_1[\Delta]_1,...,[\widehat{\beta}^n]_d[\Delta]_d)$.
\end{itemize}

In other words, the weight added to the perturbation $\Delta$ is chosen as $\widehat{\beta}^n$. 
The $i$th weight component is chosen as the $i$th component of the classic empirical risk minimization estimator, i.e., $[\widehat{\beta}^n]_i$.
In other words, the proposed adaptive technique imposes more perturbation along the components where the underlying ground truth is close to $0$, making the associated adaptive estimator approach $0$.
It will be shown later that this adjustment for $\ell_\infty$-perturbed adversarial training leads to desirable statistical properties.

To better understand our proposed adaptive adversarial training, we investigate the regularization effect of the adaptive adversarial training in the following proposition.

\begin{proposition}[Regularization Effect of Adaptive Technique]\label{prop2}
 If the function $h:\mathbb{R}^d\to\mathbb{R}$ is differentiable, $\nabla h$ is uniformly continuous, and $\mathbb{E}_{\bm{Z}\sim P}\left[ \Vert \bm{w}^{-1}\otimes\nabla h(\bm{Z})\Vert_\ast\right]<\infty$ with $\bm{\omega}\in\mathbb{R}^d$ satisfying $[\bm{\omega}]_i\not = 0, 1\leq i\leq d$, then we have that
\[\mathbb{E}_{\bm{Z}\sim P}\left[\max _{\Vert \bm{w}\otimes\Delta\Vert\leq \delta}h(\bm{Z}+\Delta)\right]=\mathbb{E}_ {\bm{Z}\sim P}\left[ h(\bm{Z})\right]+\delta \mathbb{E}_{\bm{Z}\sim P}\left[ \Vert \bm{w}^{-1}\otimes\nabla h(\bm{Z})\Vert_\ast\right]+o(\delta)\]
as $\delta\to 0$.
\end{proposition}
Proposition \ref{prop2} implies that the influence of adding weight in adaptive adversarial training is reflected in the regularization term $\delta \mathbb{E}_{\bm{Z}\sim P}\left[ \Vert \bm{w}^{-1}\otimes\nabla h(\bm{Z})\Vert_\ast\right]$. 
We can immediately derive the following corollary from Proposition \ref{prop2}, which characterizes the scenario of the generalized linear model.
\begin{corollary}\label{lemmaadaptive}
If the function $L(f,y)$ is differentiable w.r.t. the first argument, $L^{\prime}(f,y)$ is uniformly continuous w.r.t. the first argument, for $\bm{\omega}\in\mathbb{R}^d$ satisfying $[\bm{\omega}]_i\not = 0, 1\leq i\leq d$, we have that
\begin{equation}\label{adaptivereg}\begin{aligned}&\mathbb{E}_{\mathbb{P}_n}\left[\max _{\Vert\bm{w}\otimes\Delta\Vert_{\infty}\leq \delta}L(\langle\bm{X}+\Delta,\beta\rangle,Y)\right]\\
&=\mathbb{E}_{\mathbb{P}_n}\left[ L(\langle\bm{X},\beta\rangle,Y)\right]+\delta \Vert\bm{w}^{-1}\otimes\beta\Vert_1\mathbb{E}_{\mathbb{P}_n}\left[ \left\vert L^\prime(\langle\bm{X},\beta\rangle,Y)\right\vert\right]+o(\delta),\end{aligned}\end{equation}
holds  as $\delta\to 0$ for $\beta\in B$, where $B$ is some compact subset of $\mathbb{R}^d$.
\end{corollary}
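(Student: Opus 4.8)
The plan is to obtain Corollary \ref{lemmaadaptive} as a direct specialization of Proposition \ref{prop2}; essentially nothing beyond bookkeeping is needed once the right substitutions are made. The one point that deserves a moment's care is that Proposition \ref{prop2} is a statement about a function $h$ of the perturbed argument alone, whereas $L(\langle\bm{x}+\Delta,\beta\rangle,y)$ depends jointly on the perturbed feature and the (unperturbed) label $y$. I would dispose of this by using that $\mathbb{P}_n$ is supported on finitely many atoms $(\bm{x}_1,y_1),\dots,(\bm{x}_n,y_n)$.

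First, since the supremum sits inside the empirical expectation, it splits across atoms:
\[\mathbb{E}_{\mathbb{P}_n}\Bigl[\sup_{\Vert\bm{w}\otimes\Delta\Vert_{\infty}\leq\delta}L(\langle\bm{X}+\Delta,\beta\rangle,Y)\Bigr]=\frac1n\sum_{i=1}^n\ \sup_{\Vert\bm{w}\otimes\Delta\Vert_{\infty}\leq\delta}L(\langle\bm{x}_i+\Delta,\beta\rangle,y_i).\]
For each fixed $i$ I would apply Proposition \ref{prop2} with $P$ taken to be the point mass at $\bm{x}_i$ and with $h(\bm{z}):=L(\langle\bm{z},\beta\rangle,y_i)$. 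Since $L(f,y)$ is continuously differentiable in its first argument and $\bm{z}\mapsto\langle\bm{z},\beta\rangle$ is linear, $h$ is continuously differentiable on $\mathbb{R}^d$, so the hypothesis of Proposition \ref{prop2} is met, and the chain rule gives $\nabla h(\bm{z})=L^\prime(\langle\bm{z},\beta\rangle,y_i)\,\beta$. Specializing the norm to $\Vert\cdot\Vert=\Vert\cdot\Vert_\infty$, whose dual is $\Vert\cdot\Vert_\ast=\Vert\cdot\Vert_1$, Proposition \ref{prop2} yields
\[\sup_{\Vert\bm{w}\otimes\Delta\Vert_{\infty}\leq\delta}L(\langle\bm{x}_i+\Delta,\beta\rangle,y_i)=L(\langle\bm{x}_i,\beta\rangle,y_i)+\delta\bigl\Vert\bm{w}^{-1}\otimes\bigl(L^\prime(\langle\bm{x}_i,\beta\rangle,y_i)\beta\bigr)\bigr\Vert_1+o(\delta).\]

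Next I would simplify the regularization term using homogeneity: for a scalar $c$, $\bm{w}^{-1}\otimes(c\,\beta)=c\,(\bm{w}^{-1}\otimes\beta)$ and $\Vert c\,\bm{v}\Vert_1=\vert c\vert\,\Vert\bm{v}\Vert_1$, so the middle term equals $\delta\,\vert L^\prime(\langle\bm{x}_i,\beta\rangle,y_i)\vert\,\Vert\bm{w}^{-1}\otimes\beta\Vert_1$. Averaging the $n$ resulting identities over $i$, pulling the constant $\delta\Vert\bm{w}^{-1}\otimes\beta\Vert_1$ out of the average, and using that a finite average of $o(\delta)$ quantities is again $o(\delta)$, produces exactly \eqref{adaptivereg}.

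I do not anticipate a real obstacle here; the only thing to be mindful of is the reduction to a per-atom statement, which is what licenses invoking the $o(\delta)$ remainder of Proposition \ref{prop2} finitely many times and aggregating, together with the implicit requirement — relevant when $\bm{w}=\vert\widehat{\beta}^n\vert$ is later substituted as in \eqref{step2} — that $\bm{w}$ have strictly positive entries so that $\bm{w}^{-1}$ and the weighted constraint set $\{\Vert\bm{w}\otimes\Delta\Vert_\infty\le\delta\}$ are well defined.
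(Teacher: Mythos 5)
Your proof is correct and is exactly the routine specialization of Proposition \ref{prop2} that the paper intends (the paper gives no explicit proof, declaring the corollary immediate): split the empirical expectation over atoms, apply the proposition to $h(\bm{z})=L(\langle\bm{z},\beta\rangle,y_i)$ with $\Vert\cdot\Vert_\ast=\Vert\cdot\Vert_1$, and use homogeneity to factor $\vert L^\prime\vert$ out of the weighted $\ell_1$ norm. Your added remarks on the per-atom $o(\delta)$ aggregation and the implicit positivity of $\bm{w}$ are both apt.
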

Corollary \ref{lemmaadaptive} indicates that having weight on the $\ell_\infty$-perturbation is asymptotically equivalent to having weight on the $\ell_1$-penalty in the regularization term. Then, we can obtain the desirable properties of the proposed adaptive adversarial training in the following subsection.
\subsection{Statistical Properties of 
Adaptive Adversarial Training}
This subsection discusses the properties of adaptive adversarial training and compares it with classic adversarial training. 

We first clarify some notations.
We let $\mathcal{A}=\{j\vert [\beta^\ast]_j\not=0\}$
denote the index set of non-zero components of the ground-truth parameter $\beta^\ast\in\mathbb{R}^d$. 
Without loss of generality, we also assume that $\mathcal{A}=\{1,...,s\}$. In this regard, for any $\mathbb{R}^{d\times d}$ matrix $M$, we let $M=\left(\begin{array}{cc}
    M_{11}&M_{12}  \\
    M_{21}&M_{22}
\end{array}\right),$
where $M_{11}\in\mathbb{R}^{s\times s}$, $M_{22}\in\mathbb{R}^{(d-s)\times (d-s)}$, $M_{12}\in\mathbb{R}^{r\times (d-s)}$,
and $M_{21}\in\mathbb{R}^{(d-s)\times s}$.

We discuss the asymptotic behavior of the adaptive adversarial training estimator in the following theorem.
\begin{theorem}[Asymptotic Behavior of Adaptive Adversarial Training Estimator]\label{unbiasedness}
Under Assumption \ref{assume1main}, if we choose the magnitude of perturbation as $\delta=\eta/n^{\gamma}$, $\eta>0, 0<\gamma<1$, the estimator defined in \eqref{step2} has the following convergence as $n\to\infty$:
 \begin{itemize}
 \item If $1/2<\gamma<1$, then
 \[ \sqrt{n}\left( [\widetilde{\beta}^n]_{\mathcal{A}}-[\beta^\ast]_{\mathcal{A}}\right)\Rightarrow H_{11}^{-1}[\mathbf{G}]_{\mathcal{A}},\quad \sqrt{n} [\widetilde{\beta}^n]_{\mathcal{A}^c}\to_p \bm{0};\]\item If $\gamma=1/2$, then
\[\sqrt{n}\left( [\widetilde{\beta}^n]_{\mathcal{A}}-[\beta^\ast]_{\mathcal{A}}\right)\Rightarrow H_{11}^{-1}(-\eta r[{\beta^\ast}^{-1}]_{\mathcal{A}}-\eta s[K]_{\mathcal{A}}+[\mathbf{G}]_{\mathcal{A}}),\quad \sqrt{n} [\widetilde{\beta}^n]_{\mathcal{A}^c}\to_p \bm{0};\]
 \item If $0<\gamma<1/2$, then 
  \[n^{\gamma}\left( [\widetilde{\beta}^n]_{\mathcal{A}}-[\beta^\ast]_{\mathcal{A}}\right)\Rightarrow H_{11}^{-1}(-\eta r[{\beta^\ast}^{-1}]_{\mathcal{A}}-\eta s[K]_{\mathcal{A}}),\quad n^{\gamma} [\widetilde{\beta}^n]_{\mathcal{A}^c}\to_p \bm{0},\]
 \end{itemize}
 where $[\mathbf{G}]_{\mathcal{A}}\sim \mathcal{N}(\bm{0},\Sigma_{11})$,  $\Sigma=\Cov_{P_\ast}(\nabla_\beta L(\langle \bm{X},\beta^\ast\rangle,Y))$,
$H=\mathbb{E}_{P_\ast}\left[ \nabla_{\beta}^2 L(\langle \bm{X},\beta^\ast\rangle,Y)\right]$,
$K=\mathbb{E}_{P_\ast}\left[\sign(L^\prime (\langle \bm{X},\beta^\ast\rangle,Y)) \nabla_\beta  L^\prime (\langle \bm{X},\beta^\ast\rangle,Y)\right]$,
$r=\mathbb{E}_{P_\ast}\left[ \left\vert L^\prime(\langle\bm{X},\beta^\ast\rangle,Y)\right\vert\right]$, and $s$ denotes the number of nonzero components in $\beta^\ast$, i.e., the cardinality of $\mathcal{A}$.
\end{theorem}
\begin{remark}
   We require $\gamma<1$ to avoid the indeterminate form of limits in Theorem \ref{unbiasedness}.
\end{remark}
\begin{remark}
In the adaptive adversarial training framework introduced in \eqref{step2}, we set the weight vector to be the empirical risk minimizer. However, as implied in the proof of Theorem \ref{unbiasedness}, any sequence of weight vectors that converges in probability to $\beta^\ast$ is sufficient.
\end{remark}
\begin{remark}
To avoid degenerate cases, we assume that each component of the weight vector $\widehat{\beta}^n$—the empirical risk minimizer in the adaptive adversarial training step \eqref{step2}—is nonzero. If any component happens to be zero, a small offset, such as $1/n$, can be added to ensure non-degeneracy. Since $1/n \to 0$ as $n \to \infty$, this adjustment does not affect the final asymptotics.
\end{remark}
Based on the asymptotic distribution obtained in Theorem \ref{unbiasedness}, it can be observed that the adaptive adversarial training procedure achieves asymptotic variable-selection consistency, while the classic adversarial training estimator does not. The following proposition states the variable-selection inconsistency of the adversarial training estimator.

\begin{proposition}[Variable-selection Inconsistency of Adversarial Training]\label{inconsistency}
Suppose Assumption \ref{assume1main} holds.
For the adversarial training estimator $\beta^n$ defined in \eqref{advestimator}, if the perturbation magnitude is chosen as $\delta_n=\eta/n^{\gamma}$, $\eta, \gamma>0$, we have the following
   \[\lim\inf_n P_\ast(\mathcal{A}_n=\mathcal{A})\leq c<1,\]
   where $c$ is some constant,
 and $\mathcal{A}_n=\{j\vert [\beta^n]_j\not=0\}$.
\end{proposition}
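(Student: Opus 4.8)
The plan is to leverage the asymptotic characterization already established in Theorem \ref{asyminfty} for the case $\gamma = 1/2$. Since $\mathcal{A}_n = \mathcal{A}$ requires two things simultaneously---that $[\beta^n]_j \neq 0$ for all $j \in \mathcal{A}$ and that $[\beta^n]_j = 0$ for all $j \notin \mathcal{A}$---it suffices to show that at least one of these events fails with probability bounded away from zero asymptotically. The cleaner route is to focus on the non-zero coordinates: I would show that for some fixed index $j_0 \in \mathcal{A}$, the event $\{[\beta^n]_{j_0} = 0\}$ (equivalently, a sign-inconsistency event, or more precisely an event forcing $\mathcal{A}_n \neq \mathcal{A}$ through the behavior on $\mathcal{A}$) occurs with asymptotically positive probability. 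Actually, the sharper and more standard approach here---mirroring the adaptive-LASSO inconsistency argument of \citet{zou2006adaptive}---is to argue via the \emph{zero} coordinates: if $\mathcal{A}_n = \mathcal{A}$ held with probability tending to $1$, then $(\beta^n)_2$ would be exactly zero with probability tending to $1$, which combined with root-$n$ consistency on $\mathcal{A}$ would force the limiting distribution restricted to $\mathcal{A}$ to be that of an \emph{oracle} estimator; I will show this oracle conclusion contradicts the bias term present in $V(u)$.

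Concretely, first I would translate the event $\{\mathcal{A}_n = \mathcal{A}\}$ into a statement about $\arg\min_u V(u)$. By Theorem \ref{asyminfty} with $\gamma = 1/2$, $\sqrt{n}(\beta^n - \beta^\ast) \Rightarrow \hat u := \arg\min_u V(u)$, where $V$ is given by \eqref{functionV}; note $V$ is strictly convex (since $H \succ 0$ by Assumption \ref{assume1main}) so $\hat u$ is unique and the argmin map is continuous, giving joint convergence of the relevant functionals. The event $\{[\beta^n]_j = 0 \text{ for } j \notin \mathcal{A}\}$ corresponds to $\{[\hat u]_j = 0 \text{ for } j \notin \mathcal{A}\}$ in the limit (up to boundary-of-probability issues, handled by a standard Portmanteau/Slutsky argument since $\{[\hat u]_{\mathcal{A}^c} = 0\}$ need not be a continuity set---this is where one must be slightly careful and use $\limsup_n P(\mathcal{A}_n = \mathcal{A}) \le P([\hat u]_{\mathcal{A}^c} = 0)$ via upper semicontinuity). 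Then I would argue that conditional on $[\hat u]_{\mathcal{A}^c} = 0$, the restricted vector $[\hat u]_{\mathcal{A}}$ minimizes $V$ with the $\mathcal{A}^c$-block set to zero, yielding $[\hat u]_{\mathcal{A}} = H_{11}^{-1}([\mathbf{G}]_{\mathcal{A}} - \eta [K]_{\mathcal{A}} - \eta r\, \mathrm{sign}([\beta^\ast]_{\mathcal{A}}))$, a Gaussian vector with a \emph{nonzero mean shift}. The key quantitative step is then to show $P([\hat u]_{\mathcal{A}^c} = 0) < 1$: if it equaled $1$, then $[\hat u]_{\mathcal{A}^c} = 0$ almost surely, but the full first-order/KKT conditions for $\arg\min V$ require that for each $j \notin \mathcal{A}$ the subgradient inclusion $0 \in (-[\mathbf{G}]_j + \eta[K]_j) + [H\hat u]_j + \eta r\,\partial|[\hat u]_j|$ hold, i.e. $|-[\mathbf{G}]_j + \eta[K]_j + [H\hat u]_j| \le \eta r$; since $[\mathbf{G}]_j$ is Gaussian with positive variance (from $\Sigma = \Cov(\nabla_\beta L)$ being positive definite under the non-degeneracy in Proposition \ref{assumelinearlog}), this inequality fails with positive probability, giving the strict bound $c < 1$.

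The main obstacle I anticipate is making the passage from the distributional limit $\mathcal{A}_n \leadsto$ (support of $\hat u$) fully rigorous, because the set of zero coordinates is a measure-zero boundary event for the continuous part of the limit and the argmin of a random convex function can be discontinuous there. The right tool is the epi-convergence / argmin-continuous-mapping framework already implicitly used to prove Theorem \ref{asyminfty}: one shows that the finite-$n$ objective $V_n(u) := n\,\mathbb{E}_{\mathbb{P}_n}[\ldots](\beta^\ast + u/\sqrt{n}) - n\,\mathbb{E}_{\mathbb{P}_n}[\ldots](\beta^\ast)$ epi-converges in distribution to $V$, hence $\hat u_n := \sqrt{n}(\beta^n - \beta^\ast) \Rightarrow \hat u$ jointly with the objective; then for the zero-coordinate event one uses that $\{[\beta^n]_j = 0\}$ on $\mathcal{A}^c$ is itself governed by the finite-$n$ KKT conditions, so rather than passing to the limit of an indicator I would pass to the limit of the KKT \emph{inequality} $|[\text{score}_n]_j + [\text{curvature}_n \hat u_n]_j| \le \delta_n \sqrt{n} \cdot r_n / (\text{a.s.\ positive factor})$, which is a continuous functional of $(\hat u_n, \text{score}_n)$, and conclude $P(\mathcal{A}_n = \mathcal{A}) \le P(\text{all } j\notin\mathcal{A}: |-[\mathbf{G}]_j + [H\hat u]_j + \eta[K]_j| \le \eta r) + o(1)$; the latter probability is $\le c < 1$ by the Gaussian anti-concentration remark above. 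A secondary, purely bookkeeping obstacle is verifying that the constant $c$ can be taken uniform (it is: it depends only on $H, \Sigma, K, r, \eta$, all fixed), and confirming $r > 0$ strictly, which follows from $P_\ast(L'(\langle \bm{X},\beta^\ast\rangle, Y) = 0) = 0$ in Assumption \ref{assume1main}.
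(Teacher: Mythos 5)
Your proposal is correct and follows essentially the same route as the paper: bound $P_\ast(\mathcal{A}_n=\mathcal{A})$ by the limiting probability that the $\mathcal{A}^c$-block of $\arg\min_u V(u)$ vanishes (via the closed-set/Portmanteau direction of Theorem \ref{asyminfty}), characterize that event through the KKT subgradient inequality $\vert -[\mathbf{G}]_{\mathcal{A}^c}+\eta[K]_{\mathcal{A}^c}+H_{21}H_{11}^{-1}(\,\cdot\,)\vert\le\eta r\mathbf{1}$, and conclude the probability is strictly below $1$ by non-degeneracy of the Gaussian. Your explicit treatment of the upper-semicontinuity step and of why the Gaussian linear combination is non-degenerate is slightly more careful than the paper's, but the argument is the same.
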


Recall that we have shown in Proposition \ref{positiveprob} that the asymptotic distribution of the adversarial training estimator under $\ell_\infty$-perturbation could put a positive probability to $0$, which we claim as the theoretical evidence for the sparsity-recovery ability.
However, Proposition \ref{inconsistency} demonstrates that the index set of nonzero components identified by the adversarial training procedure under $\ell_\infty$-perturbation is wrong with a positive probability. 
Thus, the classic adversarial training procedure has limitations in asymptotic variable-selection consistency.
In contrast, adaptive adversarial training could achieve asymptotic variable-selection consistency, seeing the following theorem.

\begin{theorem}[Variable-selection Consistency of Adaptive Adversarial Training]\label{consistency}
Suppose Assumption \ref{assume1main} holds.
For the adaptive adversarial training estimator $\widetilde{\beta}^n$ defined in \eqref{step2}, if the perturbation magnitude is chosen as $\delta_n=\eta/n^{\gamma}$ and $ 0<\gamma<1$, we have the following
     \[\lim_n P_\ast(\widetilde{\mathcal{A}}_n=\mathcal{A})=1,\]
     where
     $\widetilde{\mathcal{A}}_n=\{j\vert [\widetilde{\beta}^n]_j\not=0\}$.\end{theorem}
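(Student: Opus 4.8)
The plan is to establish variable-selection consistency by combining the asymptotic distribution of the adaptive estimator (Theorem \ref{unbiasedness}) with a careful analysis of the Karush--Kuhn--Tucker conditions for the penalized objective in \eqref{step2}, using the regularization reformulation \eqref{adaptivereg} from Corollary \ref{lemmaadaptive}. Consistency of the selected set $\mathcal{A}_n^\prime$ means two things must hold with probability tending to $1$: (i) no true-null coordinate is spuriously selected, i.e. $[\widetilde\beta^n]_j = 0$ for all $j \in \mathcal{A}^c$; and (ii) every truly nonzero coordinate is retained, i.e. $[\widetilde\beta^n]_j \neq 0$ for all $j \in \mathcal{A}$. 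Part (ii) is the easy direction: since Theorem \ref{unbiasedness} gives $\sqrt{n}([\widetilde\beta^n]_{\mathcal A} - [\beta^\ast]_{\mathcal A}) \Rightarrow H_{11}^{-1}[\mathbf{G}]_{\mathcal A}$, we have $[\widetilde\beta^n]_{\mathcal A} \to_p [\beta^\ast]_{\mathcal A}$, and since each $[\beta^\ast]_j \neq 0$ for $j \in \mathcal A$ with the set finite, $P_\ast([\widetilde\beta^n]_j \neq 0 \text{ for all } j\in\mathcal A) \to 1$.

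Part (i) is where the real work lies, and I would handle it through the subgradient optimality conditions. Using Corollary \ref{lemmaadaptive}, up to $o(\delta_n)$ terms the estimator $\widetilde\beta^n$ minimizes $\mathbb{E}_{\mathbb{P}_n}[L(\langle\bm X,\beta\rangle,Y)] + \delta_n \Vert \bm w^{-1}\otimes\beta\Vert_1\, \mathbb{E}_{\mathbb{P}_n}[|L^\prime(\langle\bm X,\beta\rangle,Y)|]$ with $\bm w = |\widehat\beta^n|$. Fix $j\in\mathcal A^c$. On the event $\{[\widetilde\beta^n]_j \neq 0\}$, the first-order stationarity in coordinate $j$ forces the $j$th component of the gradient of the empirical loss to balance the penalty derivative, which has magnitude of order $\delta_n (1/|[\widehat\beta^n]_j|)\,\mathbb{E}_{\mathbb{P}_n}[|L^\prime|]$. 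Now $\widehat\beta^n$ is the empirical risk minimizer, so by standard $M$-estimation theory (the same root-$n$ consistency used implicitly throughout the paper) $[\widehat\beta^n]_j = O_p(1/\sqrt n)$ since $[\beta^\ast]_j = 0$; hence $1/|[\widehat\beta^n]_j| = O_p(\sqrt n)$, and the penalty-derivative magnitude is of order $\delta_n \sqrt n = \eta\, n^{1/2-\gamma}\to\infty$ because $\gamma < 1$ — wait, more precisely it is of order $n^{1/2-\gamma}$ which diverges since $\gamma<1$ gives $1/2-\gamma > -1/2$; in fact what matters is comparing it to the $O_p(1/\sqrt n)$ scale of the loss-gradient fluctuations, so after multiplying through by $\sqrt n$ the penalty term is of order $n^{1-\gamma}\to\infty$ while the (centered) loss-gradient term is $O_p(1)$. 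This contradiction forces $P_\ast([\widetilde\beta^n]_j \neq 0) \to 0$. Taking a union over the finitely many $j\in\mathcal A^c$ completes part (i), and combining with part (ii) yields $P_\ast(\mathcal A_n^\prime = \mathcal A)\to 1$.

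The main obstacle is making the subgradient/contradiction argument in part (i) rigorous in the presence of the $o(\delta_n)$ remainder from Corollary \ref{lemmaadaptive} and the fact that the penalty weight $\bm w = |\widehat\beta^n|$ is itself random and can be arbitrarily close to zero on $\mathcal A^c$. One must argue on the event where $\widehat\beta^n$ lies in a $1/\sqrt n$-neighborhood of $\beta^\ast$ (which has probability $\to 1$), control the loss-gradient term $\nabla_\beta \mathbb{E}_{\mathbb{P}_n}[L(\langle\bm X,\widetilde\beta^n\rangle,Y)]$ uniformly using the consistency of $\widetilde\beta^n$ and a local expansion of the smooth loss (Assumption \ref{assume1main}), and verify that differentiating the $o(\delta_n)$ term does not destroy the rate separation — this is handled by noting the reformulation is an exact identity for the worst-case loss and the remainder's derivative can be absorbed since the sup is over a shrinking set. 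A clean way to sidestep the weight-degeneracy worry is to observe that $1/|[\widehat\beta^n]_j|$ being large only makes the penalty \emph{more} effective at killing coordinate $j$, so the bound goes the right way; the only care needed is a lower bound ruling out $[\widehat\beta^n]_j$ being so close to $0$ that the product $\delta_n/|[\widehat\beta^n]_j|$ fails to dominate, but since $|[\widehat\beta^n]_j|=O_p(n^{-1/2})$ the product is $\gtrsim_p \delta_n\sqrt n = \eta n^{1/2-\gamma}$, and this is exactly the quantity that, scaled by $\sqrt n$, diverges. I would also remark that this mirrors the adaptive-LASSO argument of \citet{zou2006adaptive}, with the $\ell_1$-penalty there playing the role of the regularization term induced by $\ell_\infty$-perturbation here.
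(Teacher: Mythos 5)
Your proposal is correct and follows essentially the same route as the paper: the inclusion $\mathcal{A}\subseteq\mathcal{A}_n^\prime$ (w.p. $\to 1$) via the consistency implied by Theorem \ref{unbiasedness}, and the exclusion of null coordinates via the KKT stationarity condition, where the adaptive penalty derivative at $j\in\mathcal{A}^c$ scales as $n^{1-\gamma}/\vert\sqrt{n}[\widehat{\beta}^n]_j\vert\to_p\infty$ while the (scaled) empirical loss gradient and the second-order remainder stay bounded in probability. The paper handles the $o(\delta_n)$ remainder you flag by using the exact second-order expansion of Lemma \ref{regadaptivelemma} rather than Corollary \ref{lemmaadaptive}, but this is a refinement of the same argument, not a different one.
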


In addition to achieving variable-selection consistency, the proposed adaptive adjustment technique also ensures unbiasedness. Theorem \ref{asyminfty} shows that the asymptotic distribution of the adversarial training estimator with the preferred setting $\gamma=1/2$ has a bias, indicating that this estimator may not be reliable for parameter estimation. In contrast, Theorem \ref{unbiasedness} demonstrates that the proposed adaptive adversarial training estimator can be asymptotically unbiased when $1/2<\gamma<1$, without compromising asymptotic variable-selection consistency.

To explore more properties of the adaptive adversarial training, we take the linear regression as an example in the following corollary.

\begin{corollary}[Asymptotic Behavior of Adaptive Adversarial Training in Linear Regression]\label{asymcorolaarylinear}
In the linear regression where $L=(\langle\bm{x},\beta\rangle-y)^2/2$, we assume that  the distribution of response variable $Y$ conditional on input variable $\bm{X}$ is normal, i.e., $Y|\bm{X}=\bm{x}\sim\mathcal{N}(\langle \bm{x},\beta^\ast\rangle,\sigma^2)$, $0<\sigma<\infty$, and $\bm{X}\sim\mathcal{N}(\bm{0},\bm{I}_d)$.
    If the perturbation magnitude is chosen as $\delta_n=\eta/n^{\gamma}$, $\eta,\gamma>0$,
    for the adversarial training estimator $\beta^n$ defined in \eqref{step2},
    we have the following convergence as $n\to\infty$:
    \begin{itemize}
 \item If $1/2<\gamma<1$, then
 \[ \sqrt{n}\left( [\widetilde{\beta}^n]_{\mathcal{A}}-[\beta^\ast]_{\mathcal{A}}\right)\Rightarrow [\mathbf{G}]_{\mathcal{A}},\quad \sqrt{n} [\widetilde{\beta}^n]_{\mathcal{A}^c}\to_p \bm{0};\]
 \item If $\gamma=1/2$, then
\[\sqrt{n}\left( [\widetilde{\beta}^n]_{\mathcal{A}}-[\beta^\ast]_{\mathcal{A}}\right)\Rightarrow -\eta \sigma\sqrt{\frac{2}{\pi}}[{\beta^\ast}^{-1}]_{\mathcal{A}}+[\mathbf{G}]_{\mathcal{A}},\quad \sqrt{n} [\widetilde{\beta}^n]_{\mathcal{A}^c}\to_p \bm{0};\]
 \item If $0<\gamma<1/2$, then 
  \[n^{\gamma}\left( [\widetilde{\beta}^n]_{\mathcal{A}}-[\beta^\ast]_{\mathcal{A}}\right)\Rightarrow -\eta \sigma\sqrt{\frac{2}{\pi}}[{\beta^\ast}^{-1}]_{\mathcal{A}},\quad n^{\gamma} [\widetilde{\beta}^n]_{\mathcal{A}^c}\to_p \bm{0},\]
 \end{itemize}
where $\mathbf{G}\sim \mathcal{N}(\bm{0}, \sigma^2\bm{I}_d)$.
\end{corollary}
Under the settings in Corollary \ref{asymcorolaarylinear} and with the perturbation magnitude order $1/2<\gamma<1$, the variance of the asymptotic distribution of the adaptive adversarial training is $\sigma^2 s$, where $s$ is the number of nonzero components in $\beta^\ast$. In contrast, for the empirical risk minimization estimator, the variance of the associated  asymptotic distribution is $\sigma^2 d$, where $d$ is the dimension of the variable. Thus, when there are zero components in $\beta^\ast$, the adaptive adversarial training can achieve variance reduction compared to empirical risk minimization.
This indicates a reduced asymptotic mean squared error, as both the empirical risk minimization estimator and adaptive adversarial training under the aforementioned settings are asymptotically unbiased.

In summary, the proposed two-step procedure improves the statistical properties of adversarial training in the following ways: it achieves asymptotic variable-selection consistency, as discussed in Theorem \ref{consistency}, realizes asymptotic unbiasedness, as shown in Theorem \ref{unbiasedness}, and reduces asymptotic variance compared to empirical risk minimization under certain settings, as demonstrated in Corollary \ref{asymcorolaarylinear}.

\section{Numerical Experiments}\label{numsection}
In this section, we investigate the numerical performance of $\ell_\infty$-perturbed adversarial training and the proposed adaptive adversarial training. 
We will first give details of calculating the estimators obtained by the aforementioned two procedures in Section \ref{tractablereformulate}, and then introduce our experimental settings and results in Section \ref{experiments} and Section \ref{experimentsreal}.
\subsection{Tractable Reformulation}\label{tractablereformulate}
A major challenge of implementing adversarial training is to solve the inner maximization problem inside the empirical expectation \eqref{intro}.
To overcome this challenge, \citet{ribeiro2023regularization} provides the tractable reformulation when the loss function is based on the linear prediction. We restate the result in the following theorem. 
\begin{theorem}[Theorem 4 in \citet{ribeiro2023regularization}]\label{tractable}
   If $L(f,y)$ is convex and lower-semicontinuous w.r.t the first argument, we have that
    \[\max_{\Vert\Delta\Vert\leq \delta}L(\langle \bm{x}+\Delta,\beta\rangle,y)=\max_{s\in\{-1,1\}}L\left(\langle \bm{x},\beta\rangle+\delta s\Vert \beta\Vert_{\ast},y\right).\]
\end{theorem}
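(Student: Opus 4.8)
The plan is to use the linearity of the hypothesis function to collapse the $d$-dimensional inner maximization to a one-dimensional one, and then exploit convexity of $L$ in its first argument. First I would write $\langle \bm{x}+\Delta,\beta\rangle=\langle\bm{x},\beta\rangle+\langle\Delta,\beta\rangle$, so that $\Delta$ influences the loss only through the scalar $t:=\langle\Delta,\beta\rangle$. The next step is to identify the set of achievable values $S:=\{\langle\Delta,\beta\rangle:\Vert\Delta\Vert\leq\delta\}$. By the definition of the dual norm, $\sup_{\Vert\Delta\Vert\leq\delta}\langle\Delta,\beta\rangle=\delta\Vert\beta\Vert_\ast$, and since the $\delta$-ball in $\mathbb{R}^d$ is compact this supremum is attained, so $\delta\Vert\beta\Vert_\ast\in S$; replacing $\Delta$ by $-\Delta$ shows $-\delta\Vert\beta\Vert_\ast\in S$ as well. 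Because $S$ is the image of a convex (hence connected) set under a linear map, $S$ is an interval, so $S=[-\delta\Vert\beta\Vert_\ast,\,\delta\Vert\beta\Vert_\ast]$. (If $\beta=0$ both sides of the claimed identity equal $L(0,y)$ and there is nothing to prove.) Hence
\[\max_{\Vert\Delta\Vert\leq\delta}L(\langle\bm{x}+\Delta,\beta\rangle,y)=\max_{-\delta\Vert\beta\Vert_\ast\leq t\leq\delta\Vert\beta\Vert_\ast}L\bigl(\langle\bm{x},\beta\rangle+t,\,y\bigr).\]

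Then I would invoke convexity: the map $t\mapsto L(\langle\bm{x},\beta\rangle+t,y)$ is convex on $\mathbb{R}$ as the composition of the convex function $L(\cdot,y)$ with an affine map. For any $t=\lambda(-\delta\Vert\beta\Vert_\ast)+(1-\lambda)(\delta\Vert\beta\Vert_\ast)$ with $\lambda\in[0,1]$ we therefore get $L(\langle\bm{x},\beta\rangle+t,y)\leq\lambda L(\langle\bm{x},\beta\rangle-\delta\Vert\beta\Vert_\ast,y)+(1-\lambda)L(\langle\bm{x},\beta\rangle+\delta\Vert\beta\Vert_\ast,y)\leq\max_{s\in\{-1,1\}}L(\langle\bm{x},\beta\rangle+\delta s\Vert\beta\Vert_\ast,y)$. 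The reverse inequality is immediate since $\pm\delta\Vert\beta\Vert_\ast$ lie in the feasible interval, which yields the claimed identity. Lower semicontinuity of $L(\cdot,y)$ is used only to guarantee that the one-dimensional maximum over the compact interval is attained, so that writing ``$\max$'' on the left-hand side is justified.

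I do not expect a substantive obstacle here. The only point requiring mild care is verifying that $S$ is exactly the closed interval $[-\delta\Vert\beta\Vert_\ast,\delta\Vert\beta\Vert_\ast]$ and not merely contained in it — that is, that the dual-norm supremum is attained and that the linear image of the ball has no gaps — which follows from compactness of the $\delta$-ball together with connectedness (convexity) of its image. Everything else reduces to the elementary fact that a convex function on a line segment is maximized at one of the two endpoints.
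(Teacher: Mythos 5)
Your proof is correct, but it takes a genuinely different route from the one the paper follows. The paper restates this as Theorem 4 of \citet{ribeiro2023regularization} and, in proving the weighted analogue (Corollary \ref{tractable2}), mirrors that source's argument: write $L(z,y)=\sup_u\{uz-L^\ast(u,y)\}$ via the Fenchel conjugate and the Fenchel--Moreau theorem, push the maximization over $\Delta$ inside so that it acts on the linear term $u\langle\Delta,\beta\rangle$, evaluate it as $\max_{s\in\{-1,1\}}su\delta\Vert\beta\Vert_\ast$, and then interchange the two outer maximizations to reassemble $L$. You instead collapse the perturbation to the scalar $t=\langle\Delta,\beta\rangle$, identify the achievable set as the closed interval $[-\delta\Vert\beta\Vert_\ast,\delta\Vert\beta\Vert_\ast]$ (attainment by compactness, no gaps by connectedness of the linear image of a convex set), and finish with the elementary fact that a convex function on a segment is maximized at an endpoint. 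Your route is more elementary and more transparent about \emph{why} only the two extreme perturbations matter; it also reveals that lower semicontinuity is essentially superfluous when $L(\cdot,y)$ is finite-valued (a finite convex function on $\mathbb{R}$ is automatically continuous), whereas in the conjugate-duality proof lsc is what licenses the biconjugation $L^{\ast\ast}=L$ and lets the argument extend to extended-real-valued losses. Both arguments are valid; the duality version is the one that generalizes most directly to the weighted perturbation in Corollary \ref{tractable2}, but your one-dimensional reduction adapts just as easily there by replacing $\Vert\beta\Vert_\ast$ with $\Vert\bm{w}^{-1}\otimes\beta\Vert_\ast$.
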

Based on Theorem \ref{tractable}, we develop the tractable reformulation for the adaptive adversarial training in the following corollary.
\begin{corollary}\label{tractable2}
   If $L(f,y)$ is convex and lower-semicontinuous w.r.t the first argument, for $\bm{\omega}\in\mathbb{R}^d$ satisfying $[\bm{\omega}]_i\not = 0, 1\leq i\leq d$, we have that
    \[\max_{\Vert\bm{w}\otimes\Delta\Vert\leq \delta}L(\langle \bm{x}+\Delta,\beta\rangle,y)=\max_{s\in\{-1,1\}}L\left(\langle \bm{x},\beta\rangle+\delta s\Vert \bm{w}^{-1}\otimes\beta\Vert_{\ast},y\right).\]
\end{corollary}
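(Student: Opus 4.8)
The plan is to deduce Corollary~\ref{tractable2} from Theorem~\ref{tractable} by an invertible rescaling of the perturbation variable that turns the weighted constraint $\Vert\bm{w}\otimes\Delta\Vert\leq\delta$ back into an ordinary one. Assume first that every component of $\bm{w}$ is strictly positive, so that $\bm{w}^{-1}$ is well defined and $\Delta\mapsto\bm{w}\otimes\Delta$ is a bijection of $\mathbb{R}^d$. Substituting $\Delta'=\bm{w}\otimes\Delta$ (i.e.\ $\Delta=\bm{w}^{-1}\otimes\Delta'$), the feasible set $\{\Delta:\Vert\bm{w}\otimes\Delta\Vert\leq\delta\}$ is mapped onto $\{\Delta':\Vert\Delta'\Vert\leq\delta\}$, and the argument of $L$ transforms as
\[
\langle\bm{x}+\Delta,\beta\rangle=\langle\bm{x},\beta\rangle+\langle\bm{w}^{-1}\otimes\Delta',\beta\rangle=\langle\bm{x},\beta\rangle+\langle\Delta',\bm{w}^{-1}\otimes\beta\rangle .
\]
Hence the left-hand side of the claimed identity equals $\max_{\Vert\Delta'\Vert\leq\delta}L\big(\langle\bm{x},\beta\rangle+\langle\Delta',\bm{w}^{-1}\otimes\beta\rangle,\,y\big)$.

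To recognize this as an instance of Theorem~\ref{tractable}, introduce the rescaled feature vector $\tilde{\bm{x}}=\bm{w}\otimes\bm{x}$ and the rescaled parameter $\tilde\beta=\bm{w}^{-1}\otimes\beta$; then $\langle\tilde{\bm{x}},\tilde\beta\rangle=\langle\bm{x},\beta\rangle$ and $\langle\tilde{\bm{x}}+\Delta',\tilde\beta\rangle=\langle\bm{x},\beta\rangle+\langle\Delta',\bm{w}^{-1}\otimes\beta\rangle$, so the displayed maximum is exactly $\max_{\Vert\Delta'\Vert\leq\delta}L(\langle\tilde{\bm{x}}+\Delta',\tilde\beta\rangle,y)$. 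Since $L(f,y)$ is convex and lower-semicontinuous in $f$ by hypothesis, Theorem~\ref{tractable} applies with $(\bm{x},\beta)$ replaced by $(\tilde{\bm{x}},\tilde\beta)$ and gives
\[
\max_{\Vert\Delta'\Vert\leq\delta}L(\langle\tilde{\bm{x}}+\Delta',\tilde\beta\rangle,y)=\max_{s\in\{-1,1\}}L\big(\langle\tilde{\bm{x}},\tilde\beta\rangle+\delta s\Vert\tilde\beta\Vert_{\ast},\,y\big)=\max_{s\in\{-1,1\}}L\big(\langle\bm{x},\beta\rangle+\delta s\Vert\bm{w}^{-1}\otimes\beta\Vert_{\ast},\,y\big),
\]
which is the right-hand side of the corollary. (Equivalently, one may bypass Theorem~\ref{tractable}: as $\Delta'$ ranges over $\{\Vert\Delta'\Vert\leq\delta\}$ the scalar $\langle\Delta',\bm{w}^{-1}\otimes\beta\rangle$ sweeps the whole interval $[-\delta\Vert\bm{w}^{-1}\otimes\beta\Vert_{\ast},\ \delta\Vert\bm{w}^{-1}\otimes\beta\Vert_{\ast}]$ by the definition of the dual norm together with convexity and symmetry of the ball, and a convex function on an interval attains its maximum at an endpoint.)

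The only genuine subtlety is the degenerate case in which some component $w_i$ vanishes, which is precisely the situation in Step~2 of adaptive adversarial training when $[\widehat{\beta}^n]_i=0$: then $\Delta_i$ is unconstrained, so the left-hand side is $+\infty$ unless $\beta_i=0$, and with the conventions $1/0=+\infty$ and $\infty\cdot 0=0$ this is matched on the right, since $\Vert\bm{w}^{-1}\otimes\beta\Vert_{\ast}=+\infty$ exactly when $\beta_i\neq 0$ for some $i$ with $w_i=0$. I would therefore state the corollary for strictly positive $\bm{w}$ (which suffices for the downstream analysis, since $\{[\widehat{\beta}^n]_i\neq 0\ \forall i\}$ has probability tending to one under Assumption~\ref{assume1main}) and dispatch the degenerate case with a one-line remark using the above convention; I expect this bookkeeping, rather than any real mathematical difficulty, to be the main point to handle carefully.
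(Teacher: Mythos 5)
Your proof is correct, but it takes a genuinely different route from the paper's. The paper does not invoke Theorem~\ref{tractable} as a black box; instead it re-runs the Fenchel--Moreau biconjugation argument from scratch: writing $L(\langle\bm{x}+\Delta,\beta\rangle,y)=\sup_u\{u\langle\bm{x}+\Delta,\beta\rangle-L^\ast(u,y)\}$, pushing the maximization over $\Delta$ inside (where the only $\Delta$-dependent term is linear), evaluating $\max_{\Vert\bm{w}\otimes\Delta\Vert\leq\delta}u\langle\Delta,\beta\rangle=\max_{s=\pm1}su\delta\Vert\bm{w}^{-1}\otimes\beta\Vert_\ast$, swapping the two outer extrema, and applying Fenchel--Moreau again. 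You instead perform the substitution $\Delta'=\bm{w}\otimes\Delta$ up front, observe that $(\tilde{\bm{x}},\tilde\beta)=(\bm{w}\otimes\bm{x},\bm{w}^{-1}\otimes\beta)$ reproduces the unweighted problem exactly, and then cite Theorem~\ref{tractable} once. Your reduction is shorter and makes the corollary literally a corollary, which is arguably the cleaner way to present it; the paper's version is self-contained in reproducing the duality argument but duplicates work (note that its step converting the weighted constraint to the unweighted one is the same change of variables you use, so both arguments equally require every $[\bm{w}]_i\neq 0$). Your parenthetical elementary argument---that $\langle\Delta',\bm{w}^{-1}\otimes\beta\rangle$ sweeps the interval $[-\delta\Vert\bm{w}^{-1}\otimes\beta\Vert_\ast,\ \delta\Vert\bm{w}^{-1}\otimes\beta\Vert_\ast]$ and a convex function on an interval is maximized at an endpoint---is also valid and avoids conjugacy altogether. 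Your handling of the degenerate case $[\bm{w}]_i=0$ (which can occur in Step~2 when $[\widehat{\beta}^n]_i=0$) is a point of rigor the paper silently omits; flagging it, or restricting the statement to strictly nonzero weights, is a genuine improvement.
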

We mainly focus on linear regression and logistic regression in our experiments. Accordingly, we could immediately obtain the reformulations of the adversarial training and the adaptive adversarial training under $\ell_\infty$-perturbation for the linear regression and logistic regression, shown in the following example.
\begin{example}\label{example}
    Consider the adversarial training worst-case loss in the linear regression and logistic regression, we have that
    \[\max_{\Vert\Delta\Vert_\infty\leq \delta}(\langle \bm{x}+\Delta,\beta\rangle-y)^2=\left(\vert\langle \bm{x},\beta\rangle-y\vert+\delta \Vert \beta\Vert_1\right)^2.\]
    \[\max_{\Vert\Delta\Vert_\infty\leq \delta}\log \left(1+\exp(-y\langle \bm{x}+\Delta,\beta\rangle)\right)=\log \left(1+\exp(-y\langle \bm{x},\beta\rangle+\delta\Vert\beta\Vert_1)\right).\]
Consider the adaptive adversarial training worst-case loss in the linear regression and logistic regression, for $\bm{\omega}\in\mathbb{R}^d$ satisfying $[\bm{\omega}]_i\not = 0, 1\leq i\leq d$, we have that
        \[\max_{\Vert\bm{w}\otimes\Delta\Vert_\infty\leq \delta}(\langle \bm{x}+\Delta,\beta\rangle-y)^2=\left(\vert\langle \bm{x},\beta\rangle-y\vert+\delta \Vert \bm{w}^{-1}\otimes\beta\Vert_1\right)^2.\]
    \[\max_{\Vert\bm{w}\otimes\Delta\Vert_\infty\leq \delta}\log \left(1+\exp(-y\langle \bm{x}+\Delta,\beta\rangle)\right)=\log \left(1+\exp(-y\langle \bm{x},\beta\rangle+\delta\Vert\bm{w}^{-1}\otimes\beta\Vert_1)\right).\]
\end{example}
We implement the adversarial training and the adaptive adversarial training using the reformulations in Example \ref{example}.
More implementation details are elaborated in the next subsections.
\subsection{Synthetic-data Numerical Experiments}\label{experiments}
In this subsection, we describe how to carry out our numerical experiments on synthetic data and show our numerical results.
\subsubsection{Experimental Setting}\label{expsetting}
We consider the linear regression and the logistic regression. 

For the linear regression, we assume the input variable $\bm{X}$ follows the 10-dimensional standard normal distribution, and the response variable $Y$ follows the normal distribution, where we set $Y|\bm{X}=\bm{x}\sim \mathcal{N}(\langle\bm{x},\beta^\ast\rangle,0.01)$ and  $\beta^\ast=(1,0.8,-0.4,0,0,0,0,0,0,0)$. The first three components of the ground-truth parameter $\beta^\ast$ are nonzero while others are zero.

For the logistic regression, we assume the input variable $\bm{X}$ follows the 10-dimensional standard normal distribution, and the response variable $Y$ follows the Bernoulli distribution. More specifically, we have  $P_\ast(Y=1|\bm{X}=\bm{x})=1/(1+e^{-\langle \bm{x},\beta^{\ast}\rangle})$ and $\beta^\ast=(0.3,0.1,0,0,$ $0,0,0,0,0,0)$, where the first two components are nonzero where others are zero.

We conduct adversarial training and adaptive adversarial training for both linear regression and logistic regression using different sample sizes. Given a sample size of \( n \), we select the first \( n \) samples from the synthetic data generated by the respective regression models.
The asymptotic behavior of the adversarial training estimator varies depending on the order of the perturbation magnitude, which is parameterized as \( \delta = \eta / n^\gamma \). To explore these effects, we set \( \gamma = 1/3, 1/2, 1 \) and use \( \eta = 1 \) as the default value. 
For adaptive adversarial training, which achieves asymptotic variable-selection consistency and unbiasedness when \( 1/2 < \gamma < 1 \), we specifically choose \( \gamma = 2/3 \).

\subsubsection{Experimental Results}
\begin{figure}[htbp]
    \centering
    \begin{minipage}{0.5\textwidth}
        \centering
        \includegraphics[width=\textwidth]{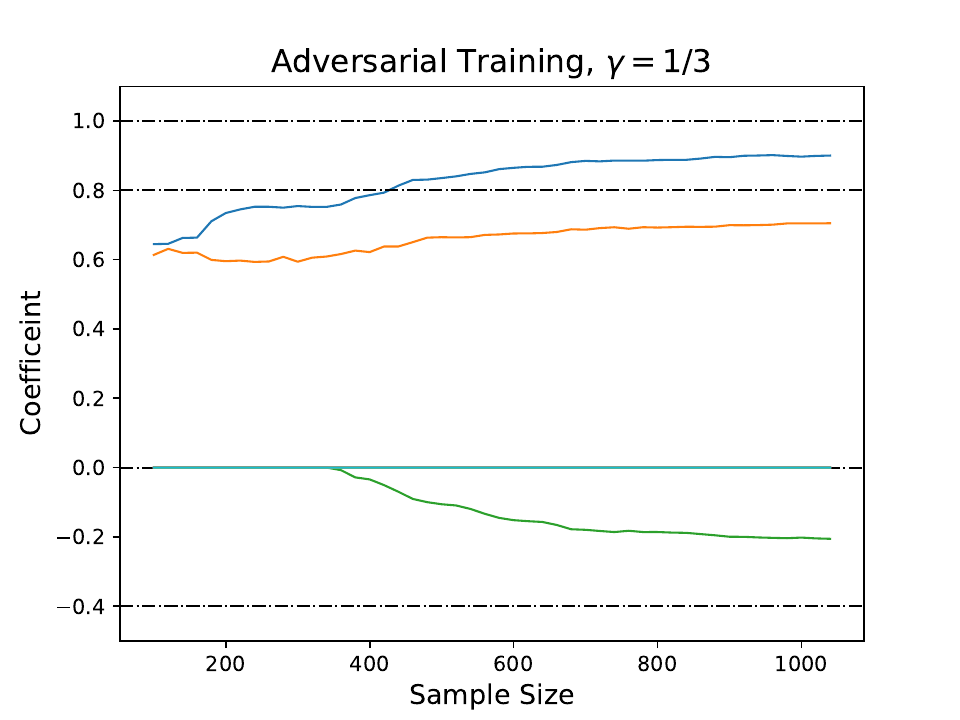} 
    \end{minipage}\hfill
    \begin{minipage}{0.5\textwidth}
        \centering
        \includegraphics[width=\textwidth]{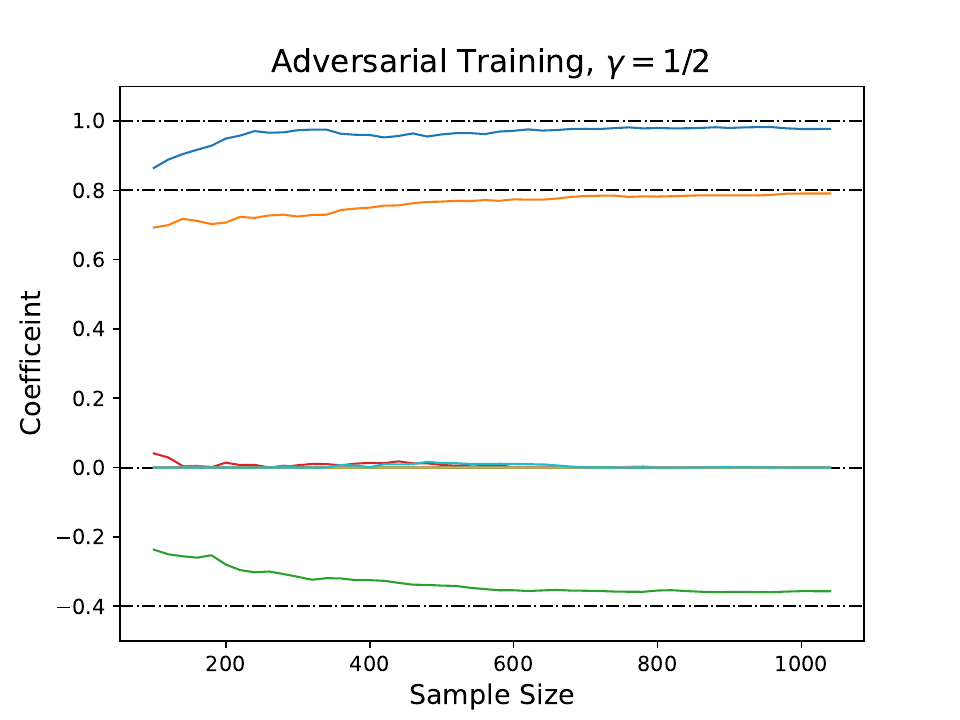} 
    \end{minipage}
        \begin{minipage}{0.5\textwidth}
        \centering
        \includegraphics[width=\textwidth]{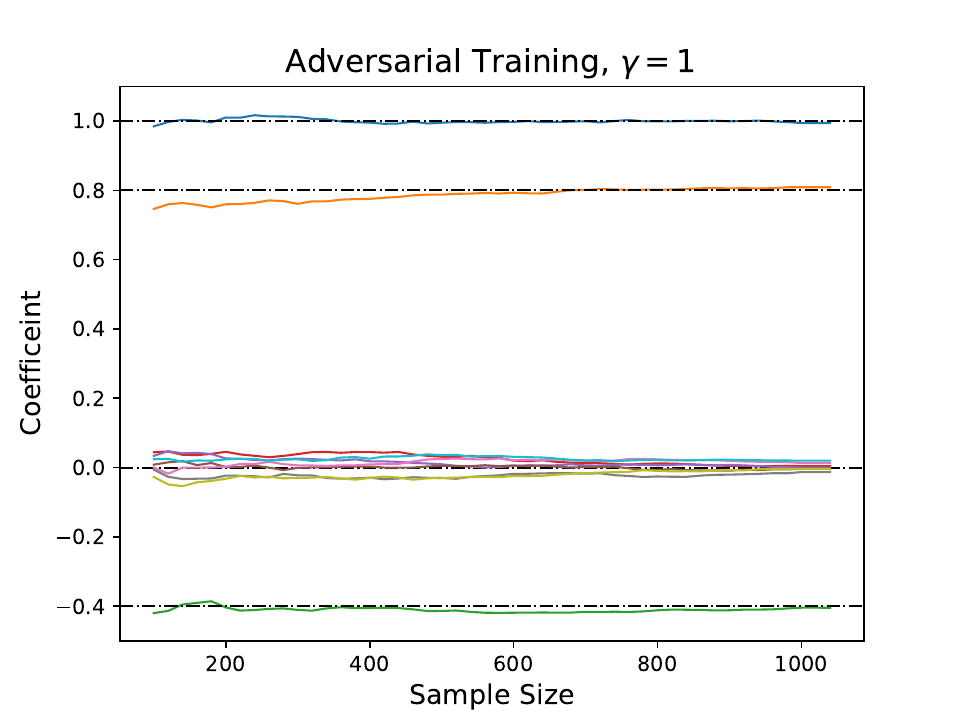} 
    \end{minipage}\hfill
            \begin{minipage}{0.5\textwidth}
        \centering
        \includegraphics[width=\textwidth]{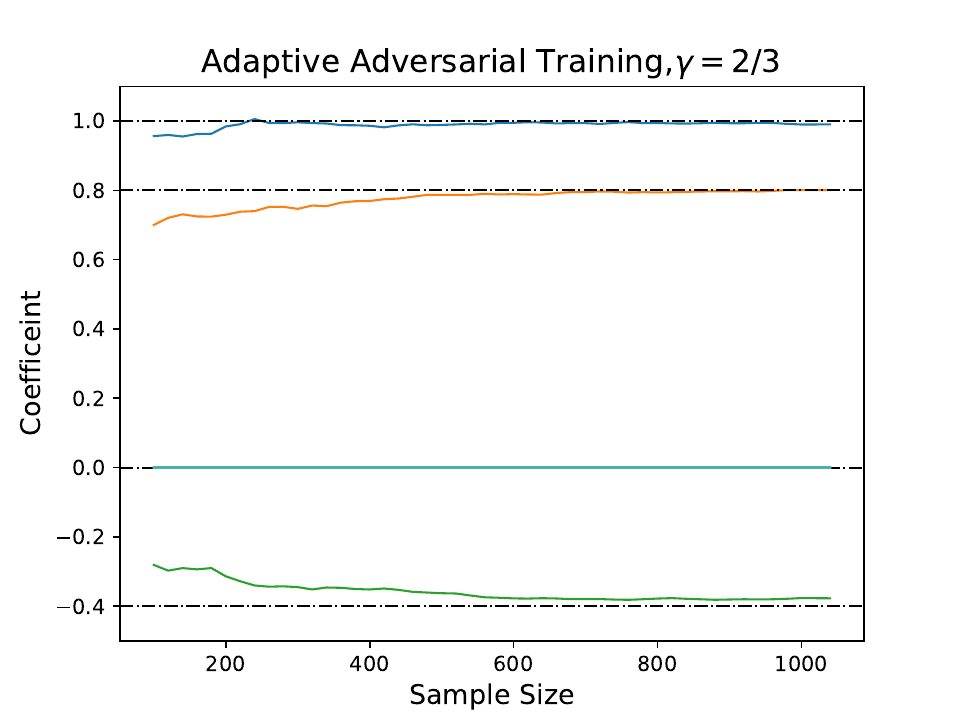} 
    \end{minipage}
    \caption{Coefficient Path in the Linear Regression}
    \label{linearfig}
\end{figure}

\begin{table}[htbp]
\begin{center}
\begin{tabular}{c|c c c c c c} 
 \hline
  & $n=100$ & $n=200$& $n=400$ & $n=600$ & $n=800$ & $n=1000$\\ [0.5ex] 
\hline\hline
 AT, $\gamma=1/3$ & 0.5665& 0.5218& 0.4597& 0.3087& 0.2645& 0.2424\\ 

 AT, $\gamma=1/2$ &0.2411 & 0.1607& 0.0996& 0.0606& 0.0525& 0.0497\\

 AT, $\gamma=1$ & $\bm{0.0895}$ & $\bm{0.0816}$& 0.0739& 0.0564& 0.0466& 0.0304\\

 Adaptive AT, $\gamma=2/3$ &0.1612& 0.1123& $\bm{0.0587}$& $\bm{0.0254}$& $\bm{0.0234}$& $\bm{0.0254}$\\ [1ex]
 \hline
\end{tabular}
\end{center}
\caption{Estimation Error $\Vert\widetilde{\beta}_n-\beta^\ast\Vert_2$ in the Linear Regression}
\label{lineartable}
\end{table}

\begin{figure}[ht]
    \centering
    \begin{minipage}{0.5\textwidth}
        \centering
        \includegraphics[width=\textwidth]{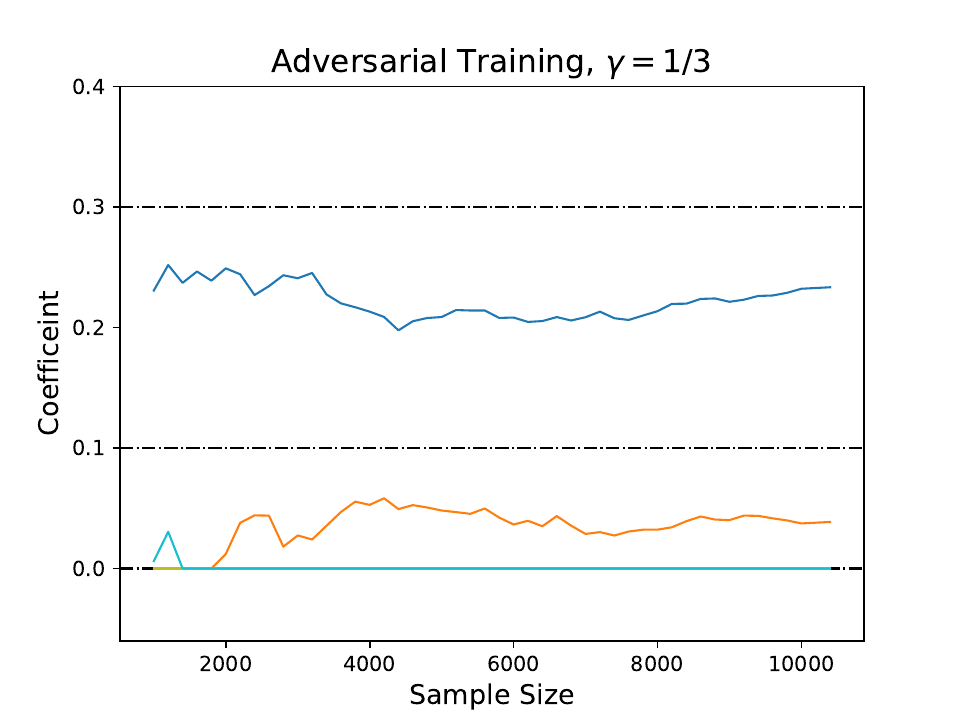} 
    \end{minipage}\hfill
    \begin{minipage}{0.5\textwidth}
        \centering
        \includegraphics[width=\textwidth]{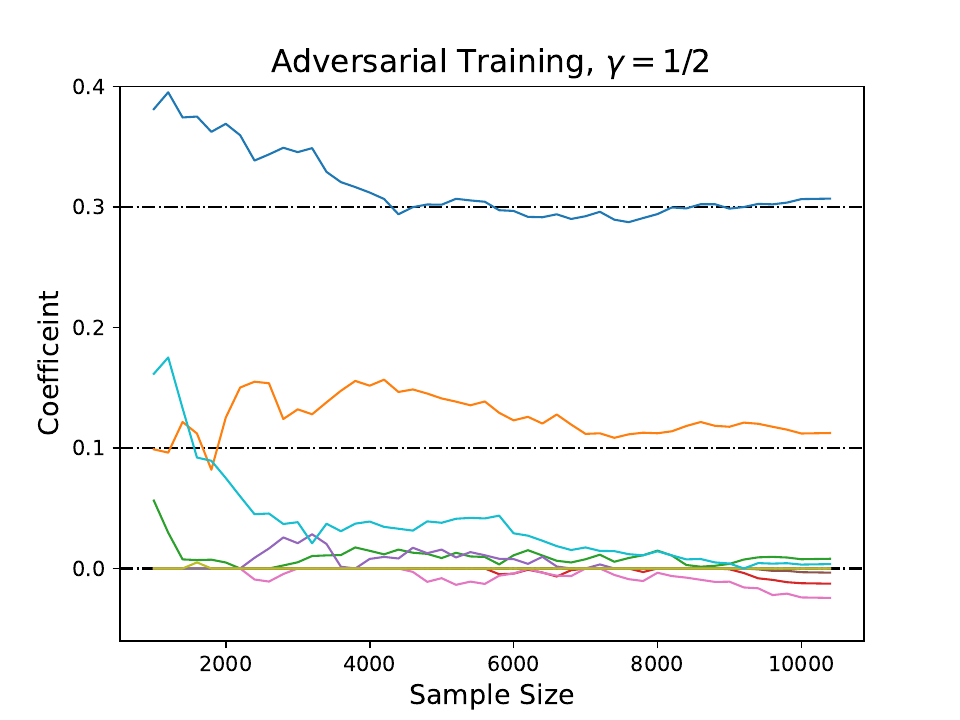} 
    \end{minipage}
        \begin{minipage}{0.5\textwidth}
        \centering
        \includegraphics[width=\textwidth]{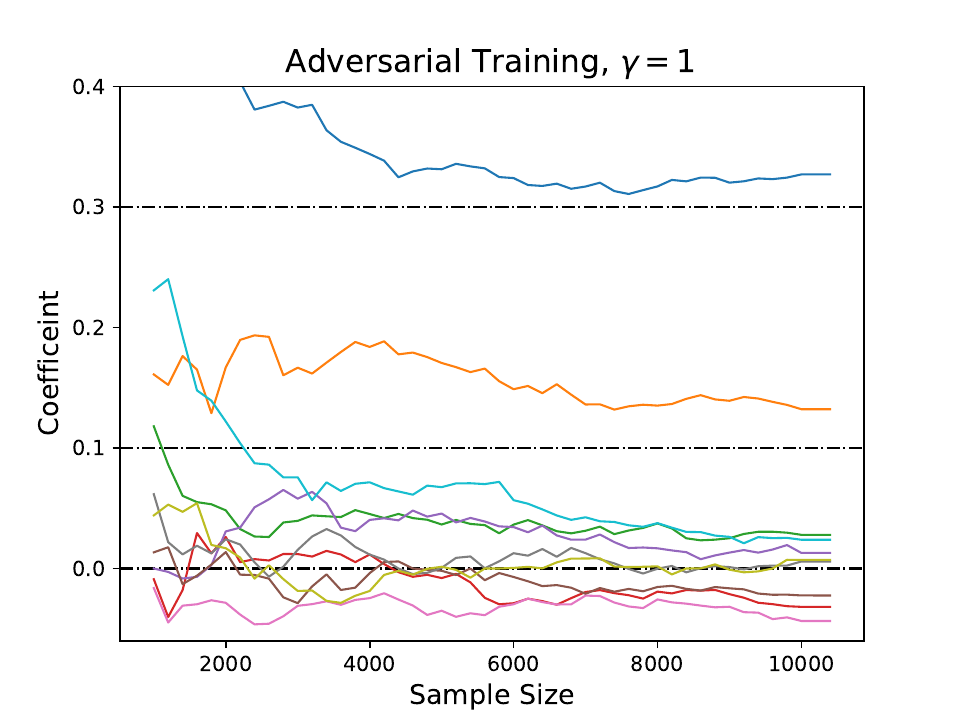} 
    \end{minipage}\hfill
            \begin{minipage}{0.5\textwidth}
        \centering
        \includegraphics[width=\textwidth]{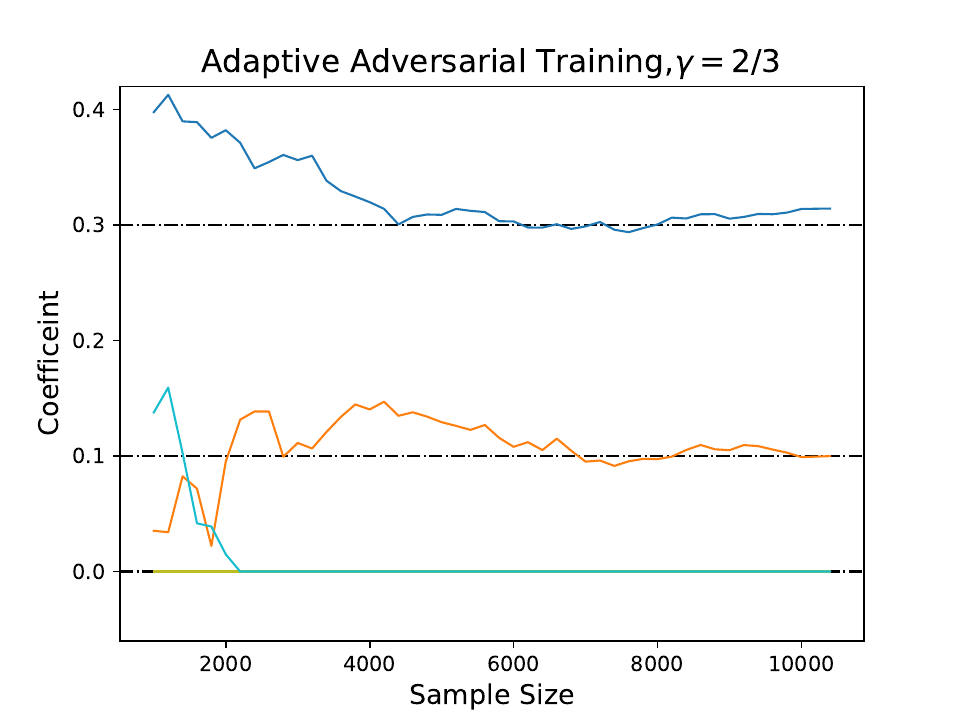} 
    \end{minipage}
    \caption{Coefficient Path in the Logistic Regression}
    \label{logfig}
\end{figure}

\begin{table}[htbp]
\begin{center}
\begin{tabular}{c|c c c c c c} 
 \hline
  & $n=1000$ & $n=2000$& $n=4000$ & $n=6000$ & $n=8000$ & $n=10000$\\ [0.5ex] 
\hline\hline
 AT, $\gamma=1/3$ & $\bm{0.1219}$&0.1017& 0.0989& 0.1116& 0.1099& 0.0924\\ 

 AT, $\gamma=1/2$ & 0.1893& 0.1051 &0.0679& 0.0401& 0.0249& 0.0315\\

 AT, $\gamma=1$ & 0.3156& 0.1978& 0.1377& 0.1029&0.0767&0.0823 \\

 Adaptive AT, $\gamma=2/3$&0.1809& $\bm{0.0836} $& $\bm{0.0449}$& $\bm{0.0085}$& $\bm{0.0028}$& $\bm{0.0139}$\\ [1ex]
 \hline
\end{tabular}

\end{center}
\caption{Estimation Error $\Vert\widetilde{\beta}_n-\beta^\ast\Vert_2$ in the Logistic Regression}
\label{logtable}
\end{table}

The experimental results are shown in Figure \ref{linearfig}, Figure \ref{logfig}, Table   \ref{lineartable}, and Table   \ref{logtable}.
The coefficient paths are presented in Figure \ref{linearfig} and Figure \ref{logfig}, showing the ability of sparsity-recovery and variable-selection under different settings.
The estimation error is recorded in Table   \ref{lineartable} and Table  \ref{logtable}, helping us compare the overall estimation accuracy for different procedures and settings. In the tables, we use ``AT'' to denote ``adversarial training'' to save space.

We analyze the numerical results as follows.
In Figures \ref{linearfig} and \ref{logfig}, the coefficient paths have different patterns depending on the choice of \( \gamma \). In adversarial training, when \( \gamma = 1/3 \), the coefficient paths indicate an estimation bias, suggesting that the estimations in this setting are not very accurate. When \( \gamma = 1 \), adversarial training provides desirable estimations for the nonzero components but fails to shrink the estimations for the zero components. When \( \gamma = 1/2 \), adversarial training not only shrinks the zero components effectively but also yields accurate estimations for the nonzero components. These observations align with the results in Theorem \ref{unbiasedness}. Thus, \( \gamma = 1/2 \) is the optimal choice, particularly for sparsity recovery in adversarial training.
In adaptive adversarial training, we achieve accurate estimations for both zero and nonzero components. Compared to the optimal scenario (\( \gamma = 1/2 \)) in standard adversarial training, Figures \ref{linearfig} and \ref{logfig} show that the estimation performances for nonzero components are similar, while adaptive adversarial training exhibits superior estimation performance for the zero components. Moreover, Tables \ref{lineartable} and \ref{logtable} indicate that the estimation error in adaptive adversarial training is the smallest in most cases, which are highlighted in bold.
In conclusion, adaptive adversarial training demonstrates a better ability for sparsity recovery and produces more accurate estimations. These observations are consistent with the results in Section \ref{adaptivesection}.

\subsection{Real-data Numerical Experiments}\label{experimentsreal}
In this subsection, we describe how to conduct empirical experiments on real-world datasets and show our numerical results.
\subsubsection{Experiment Setting}
We conduct adversarial training and adaptive adversarial training on two datasets: linear regression on the California housing dataset \citep{pedregosa2011scikit} and logistic regression on the Pulsar candidates dataset \citep{misc_htru2_372}.
California Housing dataset is loaded from scikit-learn datasets. Pulsar candidates dataset is loaded from UCI Machine Learning Repository. The California housing dataset contains 20640 samples with 8 numerical features, such as median income, house age, and average number of rooms. The response variable is the median house value in units of 100 thousand dollars. The Pulsar candidates dataset consists of 17898 samples with 8 numerical features extracted from radio signal observations, such as the mean, standard deviation, skewness, and kurtosis. The response variable is a binary label, where 0 represents non-pulsars and 1 represents pulsars.
We preprocess both datasets.
First, we standardize the input variables. We then split the data into a training set and a testing set, with the training set comprising 80\% of the instances. The adversarial training procedures are applied to the training set, and the estimation performance is evaluated on the testing set.
Per the settings of the adversarial training, we follow the same steps outlined in Section \ref{expsetting}.
\subsubsection{Experiment Results}
The numerical results of the California housing dataset and the Pulsar candidates dataset are reported in Figure \ref{realdatfigure}, Table \ref{realtable}, and Figure \ref{realdatfigure2}, Table \ref{realtable2}, respectively. Figure \ref{realdatfigure} and Figure  \ref{realdatfigure2} show the coefficient paths. Table  \ref{realtable} and Table \ref{realtable2} record the average of the squared prediction error, 
where the prediction is obtained by applying the coefficients estimated from the training samples to the testing samples.

Notice that both the California housing dataset and the Pulsar candidates dataset have 8 input variables.  
In Figures \ref{realdatfigure} and \ref{realdatfigure2}, the number of coefficient paths is fewer than 8, suggesting that the (adaptive) adversarial training procedures exhibit empirical sparsity-recovery abilities in real-world datasets.  
Additionally, Tables \ref{realtable} and \ref{realtable2} show that adaptive adversarial training generally achieves lower prediction errors on the testing samples compared to classic adversarial training.  
These observations highlight the sparsity-recovery ability of adversarial training and underscore the superiority of adaptive adversarial training, aligning with our theoretical results.  

\begin{figure}[htbp]
    \centering
    \begin{minipage}{0.5\textwidth}
        \centering
        \includegraphics[width=\textwidth]{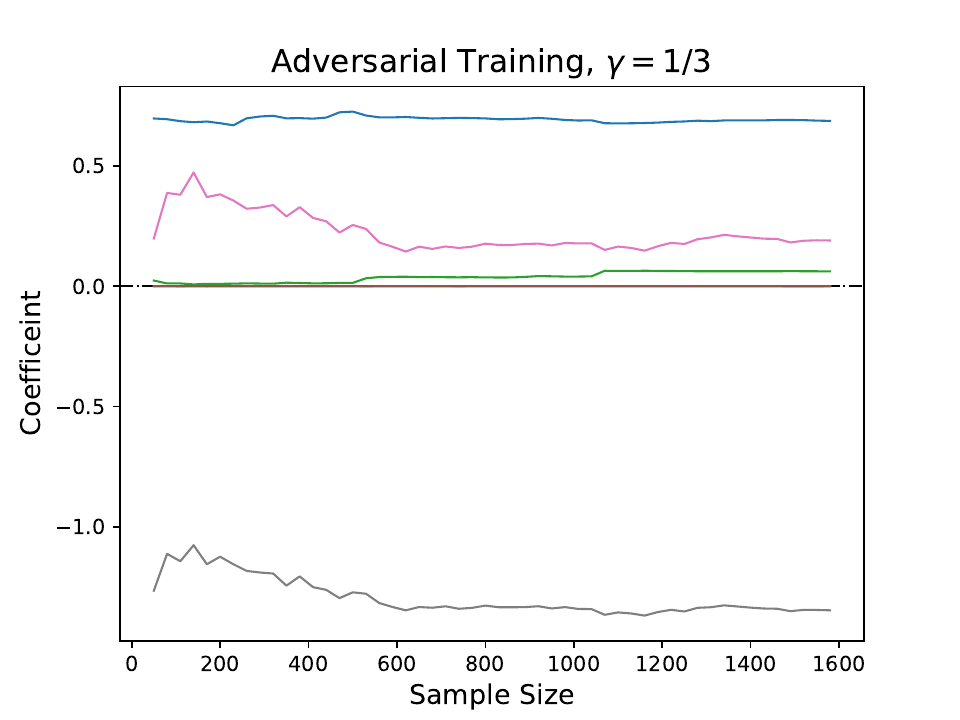} 
    \end{minipage}\hfill
    \begin{minipage}{0.5\textwidth}
        \centering
        \includegraphics[width=\textwidth]{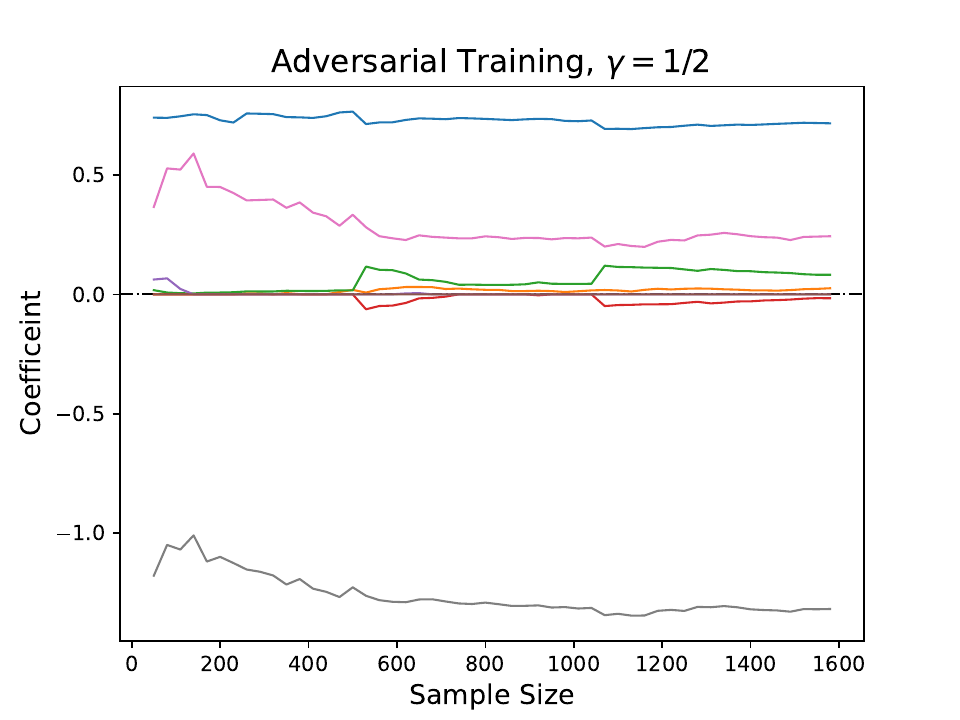} 
    \end{minipage}
        \begin{minipage}{0.5\textwidth}
        \centering
        \includegraphics[width=\textwidth]{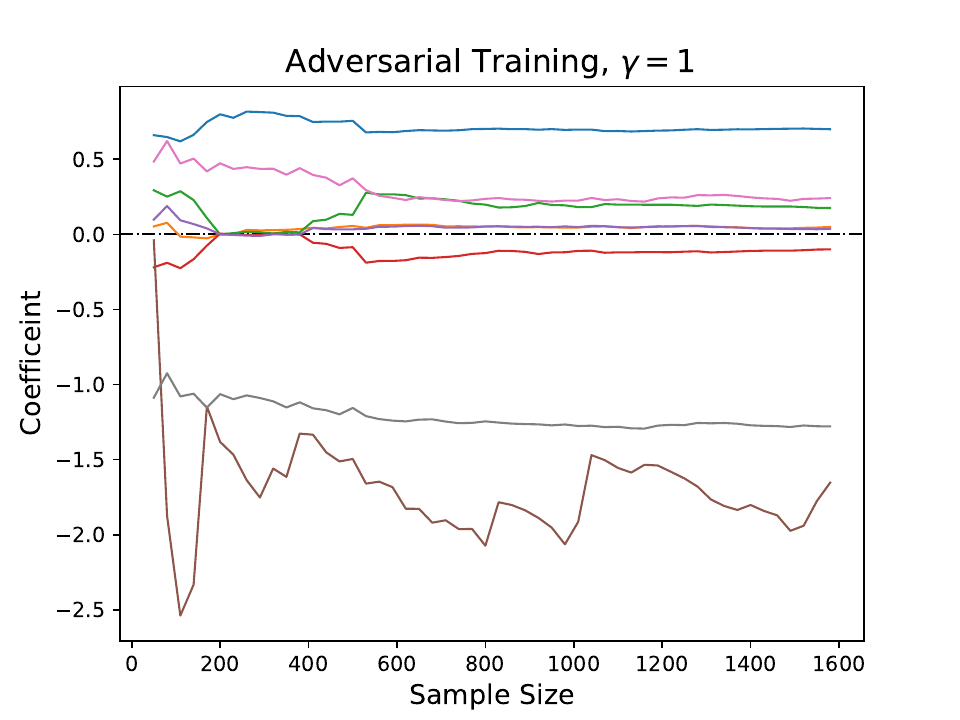} 
    \end{minipage}\hfill
            \begin{minipage}{0.5\textwidth}
        \centering
        \includegraphics[width=\textwidth]{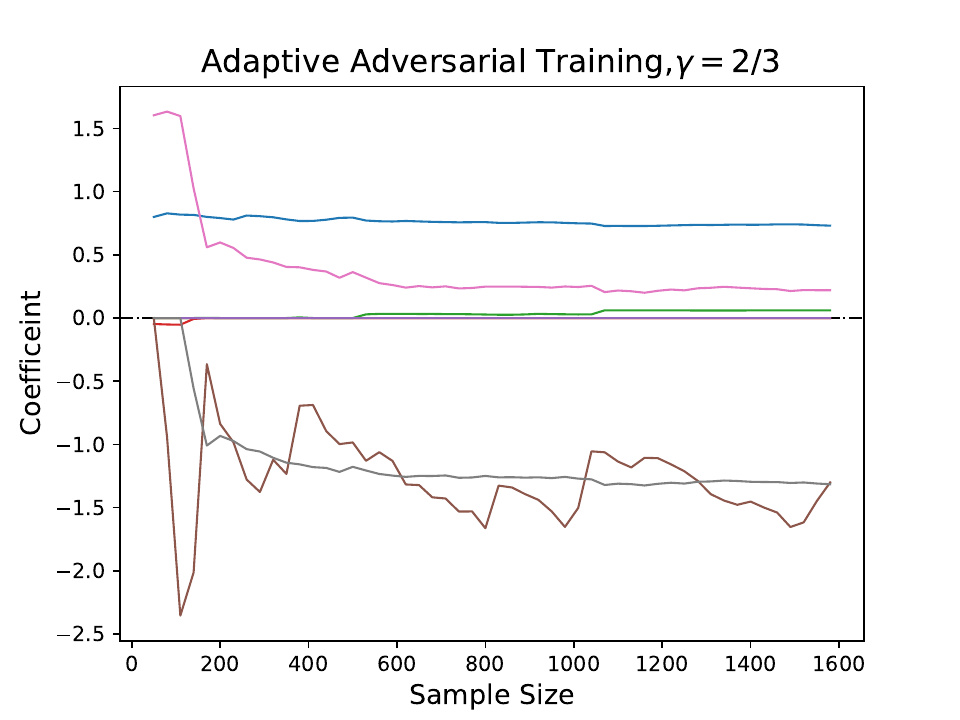} 
    \end{minipage}
    \caption{Coefficient Path in the Linear Regression}
    \label{realdatfigure}
\end{figure}

\begin{table}[htbp]
\begin{center}
\begin{tabular}{c|c c c c c c} 
 \hline
  & $n=100$ & $n=300$& $n=600$ & $n=900$ & $n=1200$ & $n=1500$\\ [0.5ex] 
\hline\hline
 AT, $\gamma=1/3$ & 0.3556& 0.3512& 0.3505& 0.3471& 0.3453& 0.3438\\ 

 AT, $\gamma=1/2$ & $\bm{0.3552}$& 0.3444& 0.3436&0.3403& 0.3426& 0.3407\\

 AT, $\gamma=1$ &0.3798& $\bm{0.3234}$& 0.3312& 0.3247&0.3254&0.3233 \\

 Adaptive AT, $\gamma=2/3$ & 0.6381& 0.3295& $\bm{0.3257}$& $\bm{0.3225}$& $\bm{0.3247}$& $\bm{0.3212}$\\ [1ex]
 \hline
\end{tabular}
\end{center}
\caption{Average of Squared Prediction Error in the Linear Regression}
\label{realtable}
\end{table}

\begin{figure}[htbp]
    \centering
    \begin{minipage}{0.5\textwidth}
        \centering
        \includegraphics[width=\textwidth]{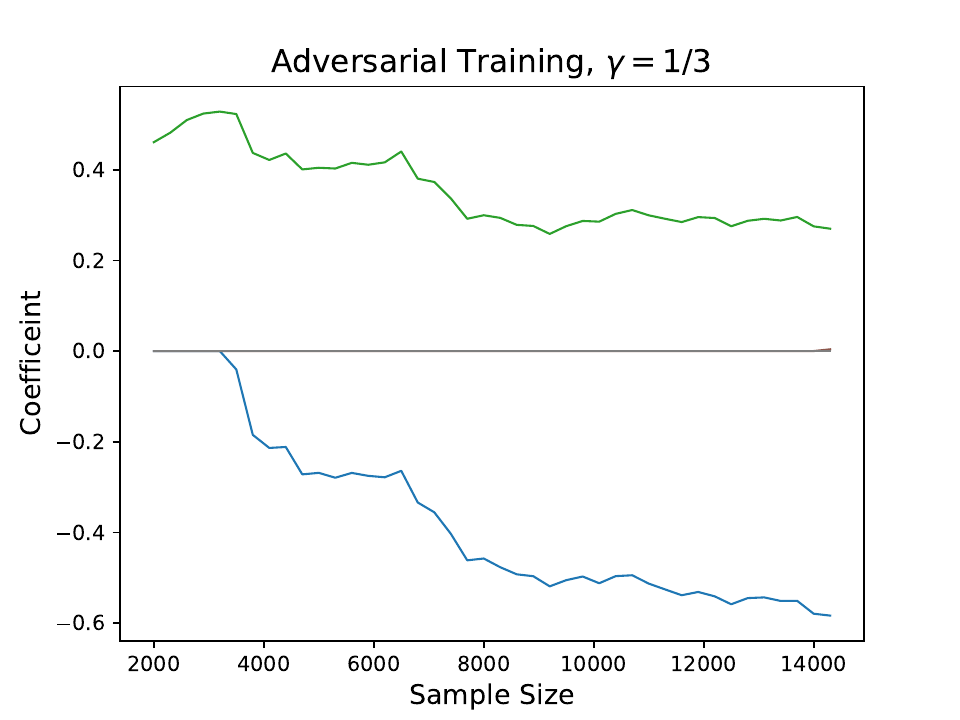} 
    \end{minipage}\hfill
    \begin{minipage}{0.5\textwidth}
        \centering
        \includegraphics[width=\textwidth]{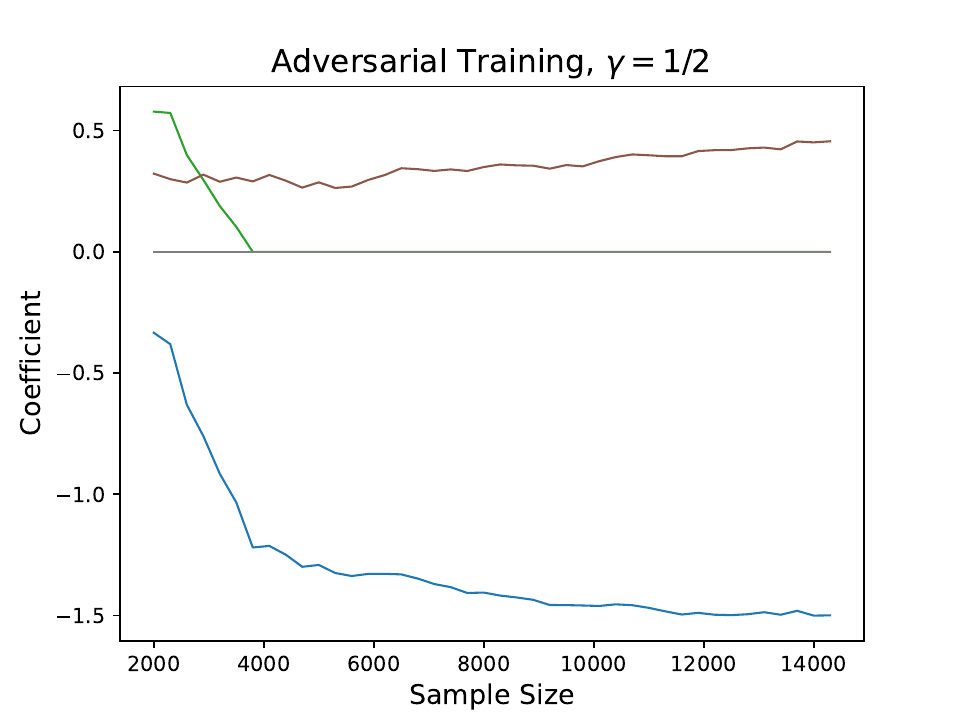} 
    \end{minipage}
        \begin{minipage}{0.5\textwidth}
        \centering
        \includegraphics[width=\textwidth]{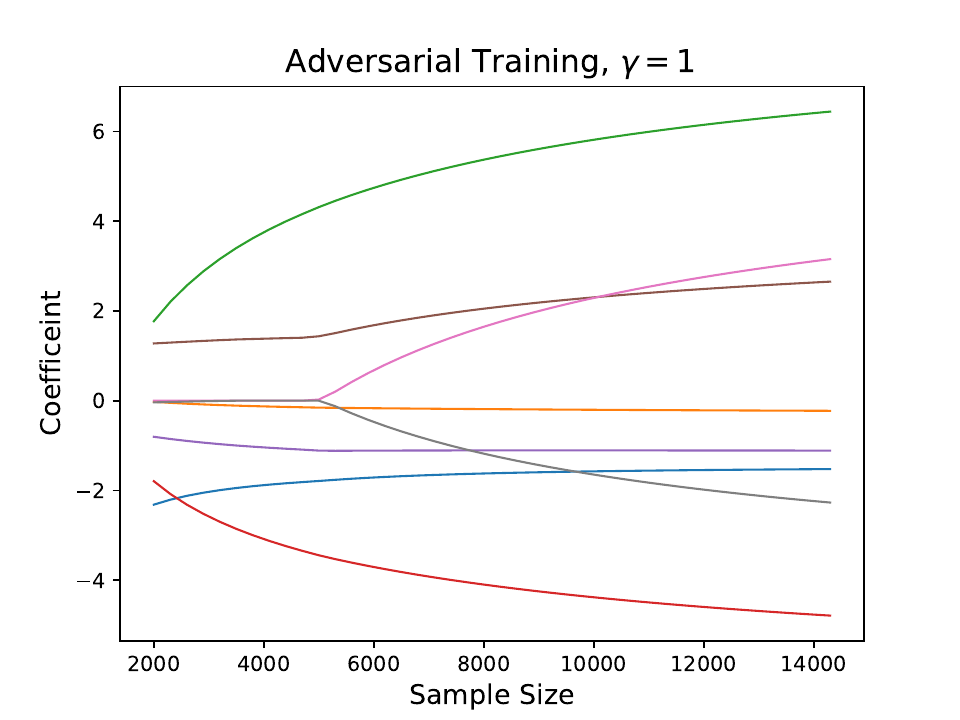} 
    \end{minipage}\hfill
            \begin{minipage}{0.5\textwidth}
        \centering
        \includegraphics[width=\textwidth]{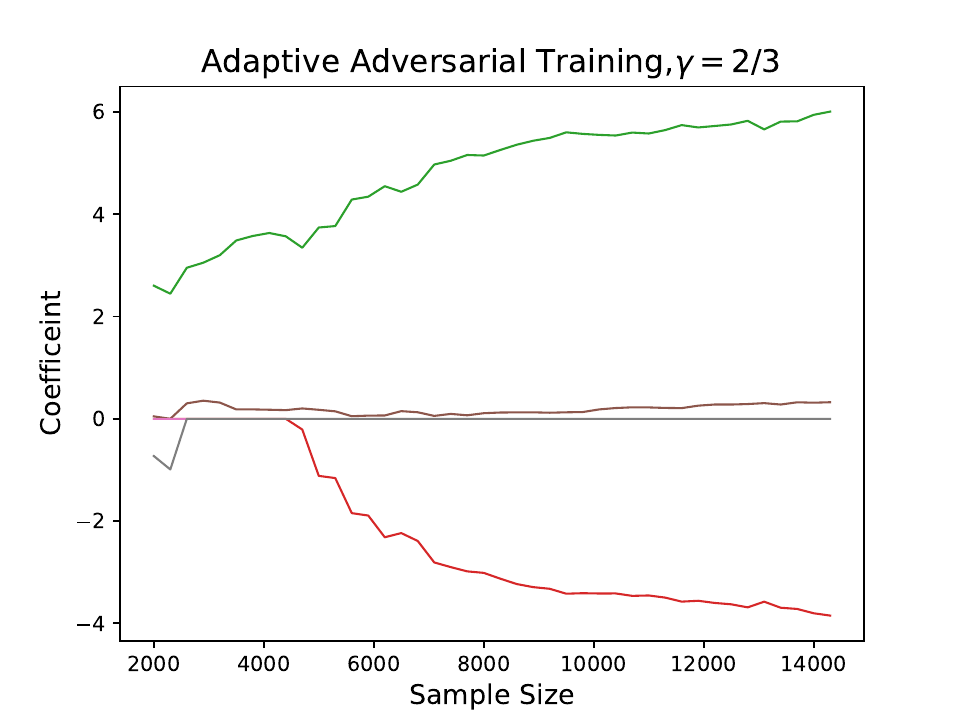} 
    \end{minipage}
    \caption{Coefficient Path in the Logistic Regression}
    \label{realdatfigure2}
\end{figure}

\begin{table}[htbp]
\begin{center}
\begin{tabular}{c|c c c c c c} 
 \hline
  & $n=4000$ & $n=6000$& $n=8000$ & $n=10000$ & $n=12000$ & $n=15000$\\ [0.5ex] 
\hline\hline
 AT, $\gamma=1/3$ & 0.2263& $\bm{0.2550}$& 0.2922& 0.2969&0.2978& 0.3078\\ 

 AT, $\gamma=1/2$ & 0.3137& 0.3204& 0.3187& 0.3170& 0.3151& 0.3134\\

 AT, $\gamma=1$ & 0.3254& 0.3436&0.3494&0.3439&0.3436& 0.3383 \\

 Adaptive AT, $\gamma=2/3$ & $\bm{0.1609}$ & 0.2693& $\bm{0.2785}$& $\bm{0.2821}$& $\bm{0.2838}$& $\bm{0.2849}$\\ [1ex]
 \hline
\end{tabular}
\end{center}
\caption{Average of Squared Prediction Error in the Logistic Regression}
\label{realtable2}
\end{table}

\section{Discussions}\label{dis}
In this paper, we leverage tools from statistics (in particular, regularized statistical learning) and optimization (in particular, distributionally robust optimization) to characterize the asymptotic behavior of the powerful machine learning model---adversarial training. 
We provide a theoretical explanation for the sparsity-recovery ability of the adversarial training procedure through the lens of large-sample asymptotics.  
Furthermore, we propose adaptive adversarial training, which incorporates the empirical risk minimization estimator as weights for the adversarial perturbation.  
In this way, adaptive adversarial training yields an asymptotically unbiased estimator that could produce sparse solutions.

There are several potential future directions. 
One may notice that there is intense literature delivering the non-asymptotic analysis for $\ell_1$-norm regularized estimator using the primal-dual witness approach \citep{wainwright2009sharp,honorio2014unified}. A promising future direction is to apply these techniques to provide a more precise non-asymptotic characterization of adversarial training under $\ell_\infty$-perturbation.

Moreover, there are two primary frameworks for analyzing statistical asymptotics. The first is the fixed-dimension framework, assuming that the sample size becomes large while keeping the variable dimension fixed. 
The fixed-dimension setting is applied in works such as \cite{fu2000asymptotics, blanchet2022confidence} and in this paper.
The second is the high-dimension framework, assuming that the sample size and the variable dimension become increasingly large in a fixed ratio. The high-dimension framework is utilized in works such as \cite{wang2017asymptotics, sur2019modern,zhao2022asymptotic}.
While the asymptotic accuracy of adversarial training in high-dimensional regimes has been well studied in the literature \citep{javanmard2022precise, taheri2023asymptotic, hassani2024curse}, how to reduce the bias in the adversarial-trained estimator under the high-dimension regime has not been investigated. Therefore, a promising future direction is to explore the bias-reduction techniques for the adversarial-trained estimator in high-dimensional settings, for instance, investigating whether the proposed adaptive technique in this paper remains effective under the high-dimension framework.

Finally, we discuss the robustness property of adversarial training. 
As mentioned in Section \ref{asymsection}, the adversarial training under $\ell_\infty$-perturbation achieves robustness against both adversarial input perturbations and overfitting due to its regularization reformulation.
In addition, our investigation reveals that adversarial training under $\ell_\infty$-perturbation enjoys sparsity-recovery ability when $\gamma=1/2$.
Recovering sparsity can be considered as pushing the estimated coefficients toward zero for components that are truly zero-valued, which in turn strengthens the robustness of the adversarial training estimator.
Therefore, under the criterion of robustness, particularly when the ground-truth model contains zero-valued components, $\gamma=1/2$ remains the optimal choice.
However, the $\ell_\infty$-perturbed adversarial training might be sensitive to outliers since adversarial training involves a min-max robust optimization problem \citep {jiang2024distributionally}. In this way, statistical outlier-robustness improvement for adversarial training can be considered as one of our future work. 
\bibliographystyle{apalike}
\bibliography{JASA.bib}

\newpage
\appendix
\section{Proofs}\label{proofsection}
\subsection{Proof of Proposition \ref{prop1}}
\begin{proof}
Since the function $h$ is differentiable, Taylor expansion implies that
\[h(\bm{z}+\delta\Delta)= h(\bm{z})+\delta \nabla h(\bm{\xi})^\top\Delta,\]
\begin{equation}\begin{aligned}\label{expansion}
h(\bm{z}+\delta\Delta)= h(\bm{z})+\delta \nabla h(\bm{z})^\top\Delta+ R(\bm{z},\delta\Delta),\end{aligned}\end{equation}
where $\xi$ lies on the line segment between $\bm{z}$ and $\bm{z}+\delta\Delta$.
Thus, the remainder $R(\bm{z},\delta\Delta)$ can be expressed as 
\[R(\bm{z},\delta\Delta)=\delta \left( \nabla h(\bm{\xi})-\nabla h(\bm{z})\right)^\top \Delta.\]

It follows from     \eqref{expansion} and the H\"older's inequality that 
\begin{equation}\begin{aligned}\label{split}
    \max _{\Vert\Delta\Vert\leq 1}\{ h(\bm{z}+\delta\Delta)- h(\bm{z})\}&= \max _{\Vert\Delta\Vert\leq 1}\{\delta\nabla h(\bm{z})^\top\Delta+R(\bm{z},\delta\Delta)\}\\
    &\leq \delta \max _{\Vert\Delta\Vert\leq 1}\nabla h(\bm{z})^\top\Delta +\max_{\Vert\Delta\Vert\leq 1} R(\bm{z},\delta\Delta)\\
    &\leq \delta \Vert\nabla h(\bm{z})\Vert_\ast +\max_{\Vert\Delta\Vert\leq 1} R(\bm{z},\delta\Delta) \end{aligned}\end{equation}

Notice we have that
\[ \vert R(\bm{z},\delta\Delta)\vert = \delta \vert\left( \nabla h(\bm{\xi})-\nabla h(\bm{z})\right)^\top \Delta\vert\leq \delta\Vert\nabla h(\bm{\xi})-\nabla h(\bm{z})\Vert_\ast \Vert\Delta\Vert,\]
resulting in 
\[ \frac{1}{\delta}\mathbb{E}_{\bm{Z}\sim P}\left[\max_{\Vert\Delta\Vert\leq 1} R(\bm{Z},\delta\Delta)\right] \leq \int_{\mathbb{R}^d}  \Vert\nabla h(\bm{\xi})-\nabla h(\bm{z})\Vert_\ast~dP(\bm{z}),\]
where $\xi$ lies on the line segment between $\bm{z}$ and $\bm{z}+\delta\Delta$.

Since $\nabla h$ is uniformly continuous, we can conclude that 
\begin{equation}\label{converge}\lim_{\delta\to 0}\frac{1}{\delta}\mathbb{E}_{\bm{Z}\sim P}\left[\max_{\Vert\Delta\Vert\leq 1} R(\bm{Z},\delta\Delta)\right] = 0.\end{equation}

In this way, it follows from \eqref{split} and \eqref{converge} that 
    \begin{equation*}
    \begin{aligned}&\mathbb{E}_{\bm{Z}\sim P}\left[\max _{\Vert\Delta\Vert\leq \delta}h(\bm{Z}+\Delta)\right]-\mathbb{E}_ {\bm{Z}\sim P}\left[ h(\bm{Z})\right]- \delta \mathbb{E}_{\bm{Z}\sim P}\left[ \Vert\nabla h(\bm{Z})\Vert_\ast\right]\\
=&\mathbb{E}_{\bm{Z}\sim P}\left[\max _{\Vert\Delta\Vert\leq \delta}h(\bm{Z}+\Delta)- h(\bm{Z})\right]-\delta\mathbb{E}_{\bm{Z}\sim P} \left[ \Vert \nabla h(\bm{Z})\Vert_\ast \right]\\
=& \mathbb{E}_{\bm{Z}\sim P}\left[\max _{\Vert\Delta\Vert\leq 1}h(\bm{Z}+\delta\Delta)- h(\bm{Z})\right]-\delta\mathbb{E}_{\bm{Z}\sim P} \left[ \Vert \nabla h(\bm{Z})\Vert_\ast \right]\\
\leq & \mathbb{E}_{\bm{Z}\sim P}\left[\max_{\Vert\Delta\Vert\leq 1} R(\bm{Z},\delta\Delta)\right]\\
= & o(\delta),
\end{aligned}\end{equation*}
which is equivalent to 
 \[\mathbb{E}_{\bm{Z}\sim P}\left[\max _{\Vert\Delta\Vert\leq \delta}h(\bm{Z}+\Delta)\right]=\mathbb{E}_ {\bm{Z}\sim P}\left[ h(\bm{Z})\right]+ \delta \mathbb{E}_{\bm{Z}\sim P}\left[ \Vert\nabla h(\bm{Z})\Vert_\ast\right]+o(\delta)\]
 as $\delta\to 0$.

\end{proof}
\subsection{Proof of Proposition \ref{assumelinear}}
\begin{proof}
      It follows from Lemma 20 and Lemma 23 in \citet{xie2024adjusted} that it suffices to show that
    $P_\ast\left(L^\prime(\langle \bm{X},\beta^\ast\rangle,Y)=0\right)=0,$
    \( L^{\prime\prime}(f,y) \) is uniformly continuous with respect to the first argument, and the following inequalities hold:
$\mathbb{E}_{P_\ast}\left[ \left\vert L^\prime(\langle\bm{X},\beta^\ast\rangle,Y)\right\vert\right]<\infty,$ 
    $\mathbb{E}_{P_\ast}\left[\Vert\nabla_{\beta}L^\prime(\langle\bm{X},\beta^\ast\rangle,Y)\Vert_2\right]<\infty,$
    $\mathbb{E}_{P_\ast}\left[\Vert \nabla_{\beta}^2L(\langle \bm{X},\beta^\ast\rangle,Y)\Vert_2\right]<\infty.$

    In linear regression, we have 
    $L^\prime(\langle \bm{x},\beta^\ast\rangle,y) = \langle \bm{x},\beta^\ast\rangle - y$, indicating
    $P_\ast\left(L^\prime(\langle \bm{X},\beta^\ast\rangle,Y)=0\right) = 0.$
    We also have \( L^{\prime\prime}(f,y) = 1 \), which is trivially uniformly continuous.

    Additionally, we establish the required expectation bounds:
    \[\mathbb{E}_{P_\ast}\left[\Vert\nabla_{\beta}L^\prime(\langle\bm{X},\beta^\ast\rangle,Y)\Vert_2\right] = \mathbb{E}_{P_\ast}\left[\Vert \bm{X}\Vert_2\right] \leq \sqrt{\mathbb{E}_{P_\ast}\left[\Vert \bm{X}\Vert_2^2 \right]} < \infty.
    \]
    \begin{equation*}
    \begin{aligned}
        \mathbb{E}_{P_\ast}\left[\left\vert L^\prime(\langle\bm{X},\beta^\ast\rangle,Y)\right\vert\right] &= \mathbb{E}_{P_\ast}\left[\vert\langle \bm{X},\beta^\ast\rangle-Y\vert\right] \\
        &= \mathbb{E}_{P_\ast}\left[\mathbb{E}_{P_\ast}\left[\left\vert\langle \bm{X},\beta^\ast\rangle-Y\right\vert \mid \bm{X} \right]\right] \\
        &= \sqrt{\frac{2}{\pi}} \mathbb{E}_{P_\ast}\left[\operatorname{Var}_{P_\ast}(Y\vert \bm{X})\right] < \infty.
    \end{aligned}
    \end{equation*}
    \[
    \mathbb{E}_{P_\ast}\left[\Vert \nabla_{\beta}^2L(\langle \bm{X},\beta^\ast\rangle,Y)\Vert_2\right] = \mathbb{E}_{P_\ast}\left[\Vert\bm{X}\bm{X}^\top\Vert_2\right] = \mathbb{E}_{P_\ast}\left[\Vert\bm{X}\Vert_2^2\right] < \infty.
    \]\end{proof}
\subsection{Proof of Proposition \ref{assumelog}}
\begin{proof}
    It follows from Lemma 29 and Lemma 22 in \citet{xie2024adjusted} that it suffices to show that
$P_\ast\left(L^\prime(\langle \bm{X},\beta^\ast\rangle,Y)=0\right)=0,$
    \( L^{\prime\prime}(f,y) \) is uniformly continuous with respect to the first argument, and the following inequalities hold:
    $\mathbb{E}_{P_\ast}\left[ \left\vert L^\prime(\langle\bm{X},\beta^\ast\rangle,Y)\right\vert\right]<\infty$,
    $
    \mathbb{E}_{P_\ast}\left[\Vert\nabla_{\beta}L^\prime(\langle\bm{X},\beta^\ast\rangle,Y)\Vert_2\right]<\infty,$
    $
    \mathbb{E}_{P_\ast}\left[\Vert \nabla_{\beta}^2L(\langle \bm{X},\beta^\ast\rangle,Y)\Vert_2\right]<\infty.$
    
    In logistic regression, we have 
    \[
    L^\prime(\langle \bm{x},\beta^\ast\rangle,y)=\frac{-y}{1+e^{y\langle\bm{x},\beta^\ast\rangle}},
    \]
    which implies that 
    \[
    P_\ast\left(L^\prime(\langle \bm{X},\beta^\ast\rangle,Y)=0\right)=0.
    \]

    Furthermore, we observe that 
    \[
    L^{\prime\prime}(f, y) = \frac{e^{yf}}{(1 + e^{yf})^2}.
    \]
    It can be shown that the derivative of \( L^{\prime\prime}(f, y) \) with respect to \( f \) is bounded, ensuring uniform continuity.

    Additionally, we establish the required expectations:
    \begin{equation*}
    \begin{aligned}
        \mathbb{E}_{P_\ast}\left[\left\vert L^\prime(\langle\bm{X},\beta^\ast\rangle,Y)\right\vert\right] &= \mathbb{E}_{P_\ast}\left[\left\vert\frac{-Y}{1+e^{Y\langle\bm{X},\beta^\ast\rangle}}\right\vert \right] < \infty.
    \end{aligned}
    \end{equation*}
    \[
    \mathbb{E}_{P_\ast}\left[\Vert\nabla_{\beta}L^\prime(\langle\bm{X},\beta^\ast\rangle,Y)\Vert_2\right] = \mathbb{E}_{P_\ast}\left[\frac{e^{Y\langle \bm{X},\beta^\ast\rangle}}{(1+e^{Y\langle \bm{X},\beta^\ast\rangle})^2} \Vert \bm{X}\Vert_2\right] < \mathbb{E}_{P_\ast}\left[\Vert \bm{X}\Vert_2 \right] \leq \sqrt{\mathbb{E}_{P_\ast}\left[\Vert \bm{X}\Vert_2^2 \right]} < \infty.
    \]
    \[
    \mathbb{E}_{P_\ast}\left[\Vert \nabla_{\beta}^2L(\langle \bm{X},\beta^\ast\rangle,Y)\Vert_2\right] < \mathbb{E}_{P_\ast}\left[\Vert\bm{X}\bm{X}^\top\Vert_2\right] = \mathbb{E}_{P_\ast}\left[\Vert\bm{X}\Vert_2^2\right] < \infty.
    \]
\end{proof}
\subsection{Proof of Theorem \ref{asyminfty}}
\subsubsection{Lemmas}
To prove Theorem \ref{asyminfty}, we need a more precise description of the regularization effect for adversarial training.
The more precise description is based on the following lemma.

\begin{lemma}\label{uniform}
    If $h$ is twice differentiable, $\nabla^2 h$ is uniformly continuous, $\mathbb{E}_{\bm{Z}\sim P}\left[ \Vert\nabla h(\bm{Z})\Vert_\ast\right]<\infty$, and $\mathbb{E}_{\bm{Z}\sim P}\left[ \Vert\nabla^2 h(\bm{Z})\Vert_\ast\right]<\infty$,  then we have that 
    \[ \mathbb{E}_{\bm{Z}\sim P}\left[\max_{\Vert \Delta\Vert\leq 1} h(\bm{Z}+\delta \Delta )- h(\bm{Z})\right]=\delta  \mathbb{E}_{\bm{Z}\sim P}\left[ \max_{\Vert\Delta\Vert\leq 1}\left\{\nabla h(\bm{Z})^\top \Delta+\frac{1}{2}\delta \Delta^\top \nabla^2h(\bm{Z}) \Delta\right\}\right] +o(\delta^2),\]    as $\delta\to 0$.
\end{lemma}

\begin{proof}
    It follows from the Taylor expansion that 
        \[ h(\bm{z}+\delta \Delta )- h(\bm{z})=\delta   \nabla h(\bm{z})^\top \Delta+\frac{1}{2}\delta^2 \Delta^\top \nabla^2h(\bm{\xi}) \Delta ,\]
        \begin{equation}\label{expansion2} h(\bm{z}+\delta \Delta )- h(\bm{z})=\delta   \nabla h(\bm{z})^\top \Delta+\frac{1}{2}\delta^2 \Delta^\top \nabla^2h(\bm{z}) \Delta +R^1 (\bm{z},\delta\Delta),\end{equation}
        where $\xi$ lies on the line segment between $\bm{z}$ and $\bm{z}+\delta\Delta$. Thus, the remainder $R^1(\bm{z},\delta\Delta)$ can be expressed as 
\[R^1(\bm{z},\delta\Delta)=\frac{1}{2}\delta^2 \Delta^\top \left( \nabla^2 h(\bm{\xi})-\nabla^2 h(\bm{z})\right) \Delta.\]

       Then, we have that
   \[\frac { \vert R^1(\bm{z},\delta\Delta)\vert}{\delta^2} =\frac{1}{2} \left\vert\Delta ^{\top} \left( \nabla ^2 h(\bm{z})- \nabla ^2h(\bm{\xi})\right)\Delta\right\vert\leq \frac{C}{2}\Vert  \nabla ^2 h(\bm{\xi})-\nabla^2 h(\bm{z})\Vert_{\ast} \Vert\Delta\Vert^2, \] 
   where $C>0$ is some constant depending on the dimension of $\Delta$, 
resulting in 
\[\frac{1}{\delta^2}\mathbb{E}_{\bm{Z}\sim P}\left[\max_{\Vert\Delta\Vert\leq 1} R^1(\bm{Z},\delta\Delta)\right] \leq \frac{C}{2}\int_{\mathbb{R}^d}  \Vert\nabla^2 h(\bm{\xi})-\nabla^2 h(\bm{z})\Vert_\ast ~dP(\bm{z}),\]
where $\xi$ lies on the line segment between $\bm{z}$ and $\bm{z}+\delta\Delta$.
   
   Due to the uniform continuity of $\nabla ^2h$, we have that 
\[\lim_{\delta\to 0} \frac{1}{\delta^2}\mathbb{E}_{\bm{Z}\sim P}\left[\max_{\Vert\Delta\Vert\leq 1} R^1(\bm{Z},\delta\Delta)\right] =0.\]

It follows from  \eqref{expansion2} that 
   \begin{equation*}\begin{aligned}
       &\mathbb{E}_{\bm{Z}\sim P}\left[\max_{\Vert \Delta\Vert\leq 1} h(\bm{Z}+\delta \Delta )- h(\bm{Z})\right]-\delta  \mathbb{E}_{\bm{Z}\sim P}\left[ \max_{\Vert\Delta\Vert\leq 1}\left\{\nabla h(\bm{Z})^\top \Delta+\frac{1}{2}\delta \Delta^\top \nabla^2h(\bm{Z}) \Delta\right\}\right]\\
       \leq&\mathbb{E}_{\bm{Z}\sim P}\left[\max_{\Vert\Delta\Vert\leq 1} R^1(\bm{Z},\delta\Delta)\right]\\
       =&o(\delta^2),\end{aligned}\end{equation*} 
       as $\delta\to 0$.
\end{proof}
Based on Lemma \ref{uniform}, we could obtain the second-order regularization effect formula.

\begin{lemma}\label{reglemma1} If the function $L(f,y)$ is twice differentiable w.r.t. the first argument, and $L^{\prime\prime}(f,y)$ is uniformly continuous w.r.t. the first argument, then we have that 
\begin{equation}
\begin{aligned}\label{regeffectglmprecise}
&\mathbb{E}_{\mathbb{P}_n}\left[\max _{\Vert\Delta\Vert_{p}\leq \delta}L(\langle \bm{X}+\Delta,\beta\rangle,Y)\right]\\
=&\mathbb{E}_{\mathbb{P}_n}\left[ L(\langle\bm{X},\beta\rangle,Y)\right]+\delta \Vert\beta\Vert_q\mathbb{E}_{\mathbb{P}_n}\left[ \vert L^{\prime}(\langle \bm{X},\beta\rangle,Y)\vert\right]+\frac{1}{2}\delta^2\Vert\beta\Vert_q^2 \mathbb{E}_{\mathbb{P}_n}[L^{\prime\prime}(\langle \bm{X},\beta\rangle,Y)]+
    o(\delta^2),
\end{aligned}\end{equation}
as $\delta\to 0$, where $p\in(1,\infty]$, and $1/p+1/q=1$.
\end{lemma}
\begin{proof}

Since the function $L(f,y)$ is twice differentiable w.r.t. the first argument and $L^{\prime\prime}(f,y)$ is uniformly continuous w.r.t. the first argument, it follows from Lemma \ref{uniform} that
    \begin{equation}
    \begin{aligned}\label{firstfirst}
    &\mathbb{E}_{\mathbb{P}_n}\left[\max _{\Vert\Delta\Vert_p\leq \delta}L(\langle \bm{X}+\Delta,\beta\rangle,Y)- L(\langle \bm{X} ,\beta\rangle,Y)\right]\\
    =&\mathbb{E}_{\mathbb{P}_n}\left[\max_{\Vert\Delta\Vert_p\leq 1} L(\langle \bm{X}+\delta\Delta,\beta\rangle,Y)- L(\langle \bm{X} ,\beta\rangle,Y)\right]\\
    =&\delta\mathbb{E}_{\mathbb{P}_n}\left[\max_{\Vert\Delta\Vert_p\leq 1}\left\{L^\prime(\langle \bm{X},\beta\rangle,Y)\Delta^\top\beta +\frac{1}{2}\delta  L^{\prime\prime}(\langle \bm{X},\beta\rangle,Y) (\Delta^\top \beta)^2\right\}\right] +o(\delta^2),
    \end{aligned}
    \end{equation}
    as $\delta\to 0$.

Then, we focus on the term $\max_{\Vert\Delta\Vert_p\leq 1}\left\{L^\prime(\langle \bm{X},\beta\rangle,Y)(\Delta^\top \beta)+\frac{1}{2}\delta L^{\prime\prime}(\langle \bm{X},\beta\rangle,Y)(\Delta^\top\beta)^2\right\}.$
If we let $t=\Delta^\top\beta$, the resulting optimization problem takes a quadratic form. Since $\delta\to0$,  the maximum can only be taken at the boundaries of the interval $\vert t\vert\leq \Vert \beta\Vert_q$. Then, we have that 
 \[\max_{\vert t\vert\leq \Vert \beta\Vert_q}\left\{\frac{1}{2}\delta L^{\prime\prime}(\langle \bm{X},\beta\rangle,Y)t^2+L^\prime(\langle \bm{X},\beta\rangle,Y)t\right\}=\frac{1}{2}\delta  L^{\prime\prime}(\langle \bm{X},\beta\rangle,Y)\Vert\beta\Vert_q^2+\vert L^\prime(\langle \bm{X},\beta\rangle,Y)\vert\Vert\beta\Vert_q.\]

In this way, problem \eqref{firstfirst} will have the following reformulation
\begin{equation}
\begin{aligned}\label{aa1}
&\mathbb{E}_{\mathbb{P}_n}\left[\max _{\Vert\Delta\Vert_p\leq \delta}L(\langle \bm{X}+\Delta,\beta\rangle,Y)- L(\langle \bm{X} ,\beta\rangle,Y)\right]
\\
=&\delta \mathbb{E}_{\mathbb{P}_n}\left[\max_{\Vert\Delta\Vert_p\leq 1}\left\{L^\prime(\langle \bm{X},\beta\rangle,Y)(\Delta^\top \beta)+\frac{1}{2}\delta L^{\prime\prime}(\langle \bm{X},\beta\rangle,Y)(\Delta^\top\beta)^2\right\} \right]+o(\delta^2)\\
=&\delta \mathbb{E}_{\mathbb{P}_n}\left[\vert L^\prime(\langle \bm{X},\beta\rangle,Y)\vert\Vert\beta\Vert_q+\frac{1}{2}\delta  L^{\prime\prime}(\langle \bm{X},\beta\rangle,Y)\Vert\beta\Vert_q^2\right]+o(\delta^2)\\
=&\delta\Vert\beta\Vert_q \mathbb{E}_{\mathbb{P}_n}\left[\vert L^\prime(\langle \bm{X},\beta\rangle,Y)\vert\right]+\frac{1}{2}\delta^2\Vert\beta\Vert_q^2\mathbb{E}_{\mathbb{P}_n}\left[L^{\prime\prime}(\langle \bm{X},\beta\rangle,Y)\right]+o(\delta^2),
\end{aligned}
\end{equation}
as $\delta\to 0$.
 
In conclusion,
    \eqref{regeffectglmprecise} holds.
\end{proof}

The following lemma is also needed to prove Theorem  \ref{asyminfty}.
\begin{lemma}\label{convexity}
If we denote
\[ \Psi_n(\beta)=\mathbb{E}_{\mathbb{P}_n}\left[\max _{\Vert\Delta\Vert_p\leq \delta}L(\langle\bm{X}+\Delta,\beta\rangle,Y)\right],\]
where $p\in(1,\infty]$, and $L(\langle\bm{x},\beta\rangle,y)$ is convex w.r.t $\beta$,
then $\Psi_n(\beta)$ is convex w.r.t. $\beta$.
\end{lemma}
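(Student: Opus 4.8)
\textbf{Proof proposal for Lemma \ref{convexity}.}

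The plan is to establish convexity of $\Psi_n(\beta)$ by showing that the map $\beta \mapsto \sup_{\Vert\Delta\Vert_\infty\leq \delta_n} L(\langle \bm{x}+\Delta,\beta\rangle, y)$ is convex for each fixed $(\bm{x},y)$, and then to conclude by noting that $\Psi_n$ is a (finite) average of such maps over the empirical distribution $\mathbb{P}_n$, and that averages (nonnegative combinations) of convex functions are convex. So the entire content reduces to the pointwise claim.

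For the pointwise claim, the first step is to observe that for each \emph{fixed} perturbation $\Delta$, the function $\beta \mapsto L(\langle \bm{x}+\Delta,\beta\rangle, y)$ is convex in $\beta$: this is exactly the hypothesis of the lemma (the loss $L(\langle \bm{x}',\beta\rangle,y)$ is convex in $\beta$ for any feature vector $\bm{x}'$, applied with $\bm{x}' = \bm{x}+\Delta$). The second step is the standard fact that the pointwise supremum of an arbitrary family of convex functions is convex: writing $g_\Delta(\beta) := L(\langle \bm{x}+\Delta,\beta\rangle,y)$, each $g_\Delta$ is convex, hence $\sup_{\Vert\Delta\Vert_\infty\leq\delta_n} g_\Delta(\beta)$ is convex in $\beta$. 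One should note this requires the supremum to be finite (proper) in a neighborhood, which holds here since $B$ is compact and $L$ is continuous, so no pathology arises. Then I would take the expectation $\mathbb{E}_{\mathbb{P}_n}$, a convex combination over the $n$ sample points, which preserves convexity.

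There is essentially no obstacle: this is a routine application of two textbook facts (supremum of convex functions is convex; nonnegative linear combinations of convex functions are convex). The only point deserving a line of care is confirming that the inner supremum is attained / finite so that we are genuinely taking a supremum of a family of real-valued convex functions rather than an extended-real-valued object — but continuity of $L$ together with compactness of the constraint set $\{\Delta : \Vert\Delta\Vert_\infty\leq\delta_n\}$ settles this immediately. Hence the proof is short and the statement follows.
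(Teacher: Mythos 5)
Your proposal is correct and is essentially the same argument as the paper's: the paper simply verifies the convexity inequality directly (using $\sup(f+g)\leq\sup f+\sup g$ and convexity of $L$ in $\beta$ for each fixed perturbed input), which is exactly the proof of the ``supremum of convex functions is convex'' fact you invoke, followed by the same averaging step. Your added remark about finiteness of the inner supremum is a harmless extra precaution; no gap either way.
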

\begin{proof}
    If we choose any $\beta_1, \beta_2\in B$ and $a_1\in(0,1)$, we have that
\begin{equation*}
\begin{aligned}
&a_1\Psi_n(\beta_1)+(1-a_1)\Psi_n(\beta_2)\\
=&\mathbb{E}_{\mathbb{P}_n}\left[a_1\max _{\Vert\Delta\Vert_p\leq \delta}L(\langle\bm{X}+\Delta,\beta_1\rangle,Y)+(1-a_1)\max _{\Vert\Delta\Vert_p\leq \delta}L(\langle\bm{X}+\Delta,\beta_2\rangle,Y)\right]\\
=&\mathbb{E}_{\mathbb{P}_n}\left[\max _{\Vert\Delta\Vert_p\leq \delta}a_1L(\langle\bm{X}+\Delta,\beta_1\rangle,Y)+\max _{\Vert\Delta\Vert_p\leq \delta}(1-a_1)L(\langle\bm{X}+\Delta,\beta_2\rangle,Y)\right]\\
\geq&\mathbb{E}_{\mathbb{P}_n}\left[\max _{\Vert\Delta\Vert_p\leq \delta}a_1 L(\langle\bm{X}+\Delta,\beta_1\rangle,Y)+(1-a_1)L(\langle\bm{X}+\Delta,\beta_2\rangle,Y)\right]\\
\overset{(a)}{\geq}&\mathbb{E}_{\mathbb{P}_n}\left[\max _{\Vert\Delta\Vert_p\leq \delta}L(\langle\bm{X}+\Delta,a_1\beta_1+(1-a_1)\beta_2\rangle,Y),\right]\\
=&\Psi_n(a_1\beta_1+(1-a_1)\beta_2),
\end{aligned}\end{equation*}
where (a) comes from the convexity of  $L(\langle\bm{x},\beta\rangle,y)$. 
Then, $\Psi_n(\beta)$ is convex due to the definition of convexity.
\end{proof}
\subsubsection{Proof}
We begin to prove Theorem \ref{asyminfty}.
\begin{proof}
\textbf{CASE 1: $\bm{\gamma=1/2}$}

We obtain from Lemma \ref{reglemma1} that 
\begin{equation*}
    \begin{aligned}
&\mathbb{E}_{\mathbb{P}_n}\left[\max _{\Vert\Delta\Vert_{\infty}\leq \delta_n}L(\langle \bm{X}+\Delta,\beta\rangle,Y)\right]\\
=&\mathbb{E}_{\mathbb{P}_n}\left[ L(\langle\bm{X},\beta\rangle,Y)\right]+\delta_n \Vert\beta\Vert_1\mathbb{E}_{\mathbb{P}_n}\left[ \left\vert L^\prime(\langle\bm{X},\beta\rangle,Y)\right\vert\right]+\frac{1}{2}\delta^2_n\Vert\beta\Vert_1^2 \mathbb{E}_{\mathbb{P}_n}[L^{\prime\prime}(\langle \bm{X},\beta\rangle,Y)]+
    o(\delta^2_n),
    \end{aligned}
\end{equation*}
based on which we further have that 
\begin{equation}
\begin{aligned}\label{defofv}
&V_n(\bm{u})\\
\overset{\vartriangle}{=}&n\left(\Psi_n(\beta^\ast+\frac{1}{\sqrt{n}}\bm{u})-\Psi_n(\beta^\ast)\right)\\
=&n\left( \mathbb{E}_{\mathbb{P}_n}\left[ L(\langle\bm{X},\beta^\ast+\frac{1}{\sqrt{n}}\bm{u}\rangle,Y)\right]-\mathbb{E}_{\mathbb{P}_n}\left[ L(\langle\bm{X},\beta^\ast\rangle,Y)\right]\right)\\
&+\eta\sqrt{n}\left(\left\Vert\beta^\ast+\frac{1}{\sqrt{n}}\bm{u}\right\Vert_1\mathbb{E}_{\mathbb{P}_n}\left[ \left\vert L^\prime(\langle\bm{X},\beta^\ast+\frac{1}{\sqrt{n}}\bm{u}\rangle,Y)\right\vert\right]-\left\Vert\beta^\ast\right\Vert_1\mathbb{E}_{\mathbb{P}_n}\left[ \left\vert L^\prime(\langle\bm{X},\beta^\ast\rangle,Y)\right\vert\right]\right)\\
&+\eta^2 \left(\frac{1}{2}\left\Vert\beta^\ast+\frac{1}{\sqrt{n}}\bm{u}\right\Vert_1^2 \mathbb{E}_{\mathbb{P}_n}\left[L^{\prime\prime}(\langle \bm{X},\beta^\ast+\frac{1}{\sqrt{n}}\bm{u}\rangle,Y)\right]-\frac{1}{2}\left\Vert\beta^\ast\right\Vert_1^2 \mathbb{E}_{\mathbb{P}_n}\left[L^{\prime\prime}(\langle \bm{X},\beta^\ast\rangle,Y)\right]\right)+o(1),
\end{aligned}
\end{equation}
as $n\to\infty$.

\textbf{We analyze the terms in \eqref{defofv} as follows.}

\textbf{For the first term in \eqref{defofv}}, due to the asymptotic theory of empirical risk minimization, we have that
\begin{equation}\label{firstequation}n\left( \mathbb{E}_{\mathbb{P}_n}\left[ L(\langle\bm{X},\beta^\ast+\frac{1}{\sqrt{n}}\bm{u}\rangle,Y)\right]-\mathbb{E}_{\mathbb{P}_n}\left[ L(\langle\bm{X},\beta^\ast\rangle,Y)\right]\right)\Rightarrow -\mathbf{G}^\top \bm{u}+\frac{1}{2}\bm{u}^\top H\bm{u}, \end{equation}
where $\mathbf{G}\sim \mathcal{N}(\bm{0},\Sigma)$ with covariance matrix $\Sigma=\Cov_{P_\ast}\left(\nabla_\beta L(\langle \bm{X},\beta^\ast\rangle,Y)\right)$ and 
$H=$\\ $\mathbb{E}_{P_\ast}\left[ \nabla_{\beta}^2 L(\langle \bm{X},\beta^\ast\rangle,Y)\right]$. We refer to Theorem 3, Proposition 6, and the associated proofs in \citet{blanchet2022confidence} for more detailed descriptions.

\textbf{For the second term in    \eqref{defofv}}, we have the following decomposition
\begin{equation}\label{decompose}
\begin{aligned}
&\sqrt{n}\left(\left\Vert\beta^\ast+\frac{1}{\sqrt{n}}\bm{u}\right\Vert_1\mathbb{E}_{\mathbb{P}_n}\left[ \left\vert L^\prime(\langle\bm{X},\beta^\ast+\frac{1}{\sqrt{n}}\bm{u}\rangle,Y)\right\vert\right]-\left\Vert\beta^\ast\right\Vert_1\mathbb{E}_{\mathbb{P}_n}\left[ \left\vert L^\prime(\langle\bm{X},\beta^\ast\rangle,Y)\right\vert\right]\right)\\
=&\sqrt{n}\left(\left\Vert\beta^\ast+\frac{1}{\sqrt{n}}\bm{u}\right\Vert_1-\left\Vert\beta^\ast\right\Vert_1\right)\mathbb{E}_{\mathbb{P}_n}\left[ \left\vert L^\prime(\langle\bm{X},\beta^\ast+\frac{1}{\sqrt{n}}\bm{u}\rangle,Y)\right\vert\right]\\
&+\sqrt{n}\left\Vert\beta^\ast\right\Vert_1\left(\mathbb{E}_{\mathbb{P}_n}\left[ \left\vert L^\prime(\langle\bm{X},\beta^\ast+\frac{1}{\sqrt{n}}\bm{u}\rangle,Y)\right\vert\right]-\mathbb{E}_{\mathbb{P}_n}\left[ \left\vert L^\prime(\langle\bm{X},\beta^\ast\rangle,Y)\right\vert\right]\right).
\end{aligned}
\end{equation}

For the first term in    \eqref{decompose}, we have that 
\begin{equation}\label{furesult}\sqrt{n}\left(\left\Vert\beta^\ast+\frac{1}{\sqrt{n}}\bm{u}\right\Vert_1-\left\Vert\beta^\ast\right\Vert_1\right)\to\sum_{j=1}^d\left([\bm{u}]_j \sign([\beta^\ast]_j) I([\beta^\ast]_j\not=0)+\vert [\bm{u}]_j\vert I([\beta^\ast]_j=0)\right),\end{equation}
and 
\[ \mathbb{E}_{\mathbb{P}_n}\left[ \left\vert L^\prime(\langle\bm{X},\beta^\ast+\frac{1}{\sqrt{n}}\bm{u}\rangle,Y)\right\vert\right]\to_p\mathbb{E}_{P_\ast}\left[ \left\vert L^\prime(\langle\bm{X},\beta^\ast\rangle,Y)\right\vert\right],\]
as $n\to\infty$.

For the second term in    \eqref{decompose}, it follows from Taylor expansion that,\\ 
if $L^\prime (\langle\bm{x},\beta^\ast\rangle,y)\not=0$,
\begin{equation}\label{taylorexpansion}
\begin{aligned}
&\sqrt{n}\left( \left\vert L^\prime(\langle\bm{x},\beta^\ast+\frac{1}{\sqrt{n}}\bm{u}\rangle,y)\right\vert- \left\vert L^\prime(\langle\bm{x},\beta^\ast\rangle,y)\right\vert\right)\\
= & \sign(L^\prime (\langle \bm{x},\beta^\ast\rangle,y)) \nabla_\beta  L^\prime (\langle \bm{x},\beta^\ast\rangle,y)^\top \bm{u} +o(1),
\end{aligned}
\end{equation}
and if $L^\prime (\langle\bm{x},\beta^\ast\rangle,y)=0$,
\begin{equation}\label{taylorexpansion2}
\begin{aligned}
&\sqrt{n}\left( \left\vert L^\prime(\langle\bm{x},\beta^\ast+\frac{1}{\sqrt{n}}\bm{u}\rangle,y)\right\vert- \left\vert L^\prime(\langle\bm{x},\beta^\ast\rangle,y)\right\vert\right)\\
= & \sqrt{n}\left\vert L^\prime(\langle\bm{x},\beta^\ast+\frac{1}{\sqrt{n}}\bm{u}\rangle,y)\right\vert\\
=& \left\vert \nabla_\beta  L^\prime (\langle \bm{x},\beta^\ast\rangle,y)^\top \bm{u}\right\vert+o(1),
\end{aligned}
\end{equation}
as $n\to\infty$.

In this way, we have that
\begin{equation}\label{usedinadaptive}
\begin{aligned}
&\sqrt{n}\left(\mathbb{E}_{\mathbb{P}_n}\left[ \left\vert L^\prime(\langle\bm{X},\beta^\ast+\frac{1}{\sqrt{n}}\bm{u}\rangle,Y)\right\vert\right]-\mathbb{E}_{\mathbb{P}_n}\left[ \left\vert L^\prime(\langle\bm{X},\beta^\ast\rangle,Y)\right\vert\right]\right)\\
=&\mathbb{E}_{\mathbb{P}_n} \left[ \mathbb{I}(L^\prime (\langle\bm{X},\beta^\ast\rangle,Y)=0) \left\vert \nabla_\beta  L^\prime (\langle \bm{X},\beta^\ast\rangle,Y)^\top \bm{u}\right\vert\right.\\
&\left. +  \mathbb{I}(L^\prime (\langle\bm{X},\beta^\ast\rangle,Y)\not=0 )\sign(L^\prime (\langle \bm{X},\beta^\ast\rangle,Y)) \nabla_\beta  L^\prime (\langle \bm{X},\beta^\ast\rangle,Y)^\top \bm{u}\right]+o(1), 
\end{aligned}\end{equation}
which indicates that 
\begin{equation}
\begin{aligned}\label{taylorexpansion3}
&\sqrt{n}\left(\mathbb{E}_{\mathbb{P}_n}\left[ \left\vert L^\prime(\langle\bm{X},\beta^\ast+\frac{1}{\sqrt{n}}\bm{u}\rangle,Y)\right\vert\right]-\mathbb{E}_{\mathbb{P}_n}\left[ \left\vert L^\prime(\langle\bm{X},\beta^\ast\rangle,Y)\right\vert\right]\right)\\
\to_p&\mathbb{E}_{P_\ast} \left[ \mathbb{I}(L^\prime (\langle\bm{X},\beta^\ast\rangle,Y)=0) \left\vert \nabla_\beta  L^\prime (\langle \bm{X},\beta^\ast\rangle,Y)^\top \bm{u}\right\vert\right.\\
&\left. +  \mathbb{I}(L^\prime (\langle\bm{X},\beta^\ast\rangle,Y)\not=0 )\sign(L^\prime (\langle \bm{X},\beta^\ast\rangle,Y)) \nabla_\beta  L^\prime (\langle \bm{X},\beta^\ast\rangle,Y)^\top \bm{u}\right]\\
\overset{(a)}{=}&\mathbb{E}_{P_\ast}\left[\sign(L^\prime (\langle \bm{X},\beta^\ast\rangle,Y)) \nabla_\beta  L^\prime (\langle \bm{X},\beta^\ast\rangle,Y)\right]^\top \bm{u},
\end{aligned}\end{equation}
as $n\to\infty$, where (a) comes from our assumption $P_\ast\left( L^\prime (\langle \bm{X},\beta^\ast\rangle,Y))=0\right)=0$.

Combining \eqref{decompose}-\eqref{taylorexpansion3}, we could obtain that, for the second term in \eqref{defofv}, the following holds
\begin{equation}\label{secondterm}
\begin{aligned}
&\sqrt{n}\left(\left\Vert\beta^\ast+\frac{1}{\sqrt{n}}\bm{u}\right\Vert_1\mathbb{E}_{\mathbb{P}_n}\left[ \left\vert L^\prime(\langle\bm{X},\beta^\ast+\frac{1}{\sqrt{n}}\bm{u}\rangle,Y)\right\vert\right]-\left\Vert\beta^\ast\right\Vert_1\mathbb{E}_{\mathbb{P}_n}\left[ \left\vert L^\prime(\langle\bm{X},\beta^\ast\rangle,Y)\right\vert\right]\right)\\
\to_p& \left(\sum_{j=1}^d\left([\bm{u}]_j \sign([\beta^\ast]_j) I([\beta^\ast]_j\not=0)+\vert [\bm{u}]_j\vert I([\beta^\ast]_j=0)\right)\right)\mathbb{E}_{P_\ast}\left[ \left\vert L^\prime(\langle\bm{X},\beta^\ast\rangle,Y)\right\vert\right]\\
&+\Vert\beta^\ast\Vert_1\mathbb{E}_{P_\ast}\left[\sign(L^\prime (\langle \bm{X},\beta^\ast\rangle,Y)) \nabla_\beta  L^\prime (\langle \bm{X},\beta^\ast\rangle,Y)\right]^\top \bm{u},
\end{aligned}
\end{equation}
as $n\to\infty$.

\textbf{For the third term in \eqref{defofv}}, we have that
\begin{equation}\label{third}
   \eta^2 \left(\frac{1}{2}\left\Vert\beta^\ast+\frac{1}{\sqrt{n}}\bm{u}\right\Vert_1^2 \mathbb{E}_{\mathbb{P}_n}\left[L^{\prime\prime}(\langle \bm{X},\beta^\ast+\frac{1}{\sqrt{n}}\bm{u}\rangle,Y)\right]-\frac{1}{2}\left\Vert\beta^\ast\right\Vert_1^2 \mathbb{E}_{\mathbb{P}_n}\left[L^{\prime\prime}(\langle \bm{X},\beta^\ast\rangle,Y)\right]\right)\to_p0,
\end{equation}
as $n\to\infty$.

 If we denote the term $\mathbb{E}_{P_\ast}\left[\sign(L^\prime (\langle \bm{X},\beta^\ast\rangle,Y)) \nabla_\beta  L^\prime (\langle \bm{X},\beta^\ast\rangle,Y)\right]$ by $K$ and the term $\mathbb{E}_{P_\ast}\left[ \left\vert L^\prime(\langle\bm{X},\beta^\ast\rangle,Y)\right\vert\right]$ by $r$, it follows from \eqref{firstequation}, \eqref{secondterm} and \eqref{third} that 
\[ V_n(\bm{u})\Rightarrow V(\bm{u}),\]
as $n\to\infty$, where 
\begin{equation*}
\begin{aligned}
V(\bm{u})=&(-\mathbf{G}+\eta\Vert\beta^\ast\Vert_1 K)^\top \bm{u}+\frac{1}{2}\bm{u}^\top H\bm{u} +\\
&\eta\left(\sum_{j=1}^d\left( [\bm{u}]_j \sign([\beta^\ast]_j) I([\beta^\ast]_j\not=0)+\vert [\bm{u}]_j\vert I([\beta^\ast]_j=0)\right)\right)r. 
\end{aligned}
\end{equation*}

Lemma \ref{convexity} shows $\Psi_n(\beta)$ is convex, demonstrating $V_n(\bm{u})$ is convex. Since $H$ is positive definite, $V(\bm{u})$ has a unique minimizer. It follows from \citet{geyer1996asymptotics,fu2000asymptotics,kato2009asymptotics} that \[ \sqrt{n}\left( \beta^n-\beta^\ast\right)\Rightarrow \arg\min_{\bm{u}} V(\bm{u}),\]
as $n\to\infty$.

\textbf{CASE 2: $\bm{\gamma>1/2}$}

In this case, we have that 
\begin{equation*}
\begin{aligned}
&V_n(\bm{u})\\
\overset{\vartriangle}{=}&n\left(\Psi_n(\beta^\ast+\frac{1}{\sqrt{n}}\bm{u})-\Psi_n(\beta^\ast)\right)\\
=&n\left( \mathbb{E}_{\mathbb{P}_n}\left[ L(\langle\bm{X},\beta^\ast+\frac{1}{\sqrt{n}}\bm{u}\rangle,Y)\right]-\mathbb{E}_{\mathbb{P}_n}\left[ L(\langle\bm{X},\beta^\ast\rangle,Y)\right]\right)\\
&+\eta n^{1/2-\gamma}\sqrt{n}\left(\left\Vert\beta^\ast+\frac{1}{\sqrt{n}}\bm{u}\right\Vert_1\mathbb{E}_{\mathbb{P}_n}\left[ \left\vert L^\prime(\langle\bm{X},\beta^\ast+\frac{1}{\sqrt{n}}\bm{u}\rangle,Y)\right\vert\right]-\left\Vert\beta^\ast\right\Vert_1\mathbb{E}_{\mathbb{P}_n}\left[ \left\vert L^\prime(\langle\bm{X},\beta^\ast\rangle,Y)\right\vert\right]\right)\\
&+\eta^2 \left(\frac{1}{2}\left\Vert\beta^\ast+\frac{1}{\sqrt{n}}\bm{u}\right\Vert_1^2 \mathbb{E}_{\mathbb{P}_n}\left[L^{\prime\prime}(\langle \bm{X},\beta^\ast+\frac{1}{\sqrt{n}}\bm{u}\rangle,Y)\right]-\frac{1}{2}\left\Vert\beta^\ast\right\Vert_1^2 \mathbb{E}_{\mathbb{P}_n}\left[L^{\prime\prime}(\langle \bm{X},\beta^\ast\rangle,Y)\right]\right)+o(1),
\end{aligned}
\end{equation*}
as $n\to\infty$.

In this way, due to $\gamma>1/2$, we have that 
\begin{equation*}\begin{aligned}
    \eta n^{1/2-\gamma}\sqrt{n}\left(\left\Vert\beta^\ast+\frac{1}{\sqrt{n}}\bm{u}\right\Vert_1\mathbb{E}_{\mathbb{P}_n}\left[ \left\vert L^\prime(\langle\bm{X},\beta^\ast+\frac{1}{\sqrt{n}}\bm{u}\rangle,Y)\right\vert\right]-\left\Vert\beta^\ast\right\Vert_1\mathbb{E}_{\mathbb{P}_n}\left[ \left\vert L^\prime(\langle\bm{X},\beta^\ast\rangle,Y)\right\vert\right]\right)\\
\to_p 0,
\end{aligned}
\end{equation*}
as $n\to\infty$.

Similar to the proof of \textbf{CASE 1}, it follows from \eqref{firstequation} that
\[ \sqrt{n}\left( \beta^n-\beta^\ast\right)\Rightarrow \arg\min_{\bm{u}} V(\bm{u}),\]
as $n\to\infty$, where \[V(\bm{u})=-\mathbf{G}^\top \bm{u}+\frac{1}{2}\bm{u}^\top H\bm{u}. \]

\textbf{CASE 3: $\bm{\gamma<1/2}$}

In this case, we have that 
\begin{equation}\label{defofv2}
\begin{aligned}
&V_n(\bm{u})\\
\overset{\vartriangle}{=}&n^{2\gamma}\left(\Psi_n(\beta^\ast+\frac{1}{n^\gamma}\bm{u})-\Psi_n(\beta^\ast)\right)\\
=&n^{2\gamma}\left( \mathbb{E}_{\mathbb{P}_n}\left[ L(\langle\bm{X},\beta^\ast+\frac{1}{n^\gamma}\bm{u}\rangle,Y)\right]-\mathbb{E}_{\mathbb{P}_n}\left[ L(\langle\bm{X},\beta^\ast\rangle,Y)\right]\right)\\
&+\eta n^\gamma\left(\left\Vert\beta^\ast+\frac{1}{n^\gamma}\bm{u}\right\Vert_1\mathbb{E}_{\mathbb{P}_n}\left[ \left\vert L^\prime(\langle\bm{X},\beta^\ast+\frac{1}{n^\gamma}\bm{u}\rangle,Y)\right\vert\right]-\left\Vert\beta^\ast\right\Vert_1\mathbb{E}_{\mathbb{P}_n}\left[ \left\vert L^\prime(\langle\bm{X},\beta^\ast\rangle,Y)\right\vert\right]\right)\\
&+\eta^2 \left(\frac{1}{2}\left\Vert\beta^\ast+\frac{1}{\sqrt{n}}\bm{u}\right\Vert_1^2 \mathbb{E}_{\mathbb{P}_n}\left[L^{\prime\prime}(\langle \bm{X},\beta^\ast+\frac{1}{\sqrt{n}}\bm{u}\rangle,Y)\right]-\frac{1}{2}\left\Vert\beta^\ast\right\Vert_1^2 \mathbb{E}_{\mathbb{P}_n}\left[L^{\prime\prime}(\langle \bm{X},\beta^\ast\rangle,Y)\right]\right)+o(1),
\end{aligned}
\end{equation}
as $n\to\infty$.

It follows from Proposition 6 in \citet{blanchet2022confidence}  that
\begin{equation*}\begin{aligned}
    n^{2\gamma}\left( \mathbb{E}_{\mathbb{P}_n}\left[ L(\langle\bm{X},\beta^\ast+\frac{1}{n^\gamma}\bm{u}\rangle,Y)\right]-\mathbb{E}_{\mathbb{P}_n}\left[ L(\langle\bm{X},\beta^\ast\rangle,Y)\right]\right)
\Rightarrow \frac{1}{2}\bm{u}^\top H\bm{u},
\end{aligned}\end{equation*}
as $n\to\infty$.

The second term and the third term in    \eqref{defofv2} have the same limits as in \textbf{CASE 1}, so we could conclude that 
\[ \sqrt{n}\left( \beta^n-\beta^\ast\right)\Rightarrow \arg\min_{\bm{u}} V(\bm{u}),\]
as $n\to\infty$,
where \[V(\bm{u})= \eta\Vert\beta^\ast\Vert_1 K^\top \bm{u}+\frac{1}{2}\bm{u}^\top H\bm{u} +\eta\left(\sum_{j=1}^d\left( [\bm{u}]_j \sign([\beta^\ast]_j) I([\beta^\ast]_j\not=0)+\vert [\bm{u}]_j\vert I([\beta^\ast]_j=0)\right)\right) r. \]
\end{proof}
\subsection{Proof of Proposition \ref{positiveprob}}
\begin{proof}
 Since $[\beta^\ast]_1,...,[\beta^\ast]_s$ are nonzero components and $[\beta^\ast]_{s+1},...,[\beta^\ast]_d$ are zero components, we have that 
\[V(\bm{u})=(-\mathbf{G}+\eta\Vert\beta^\ast\Vert_1 K)^\top u+\frac{1}{2}\bm{u}^\top H\bm{u} +\eta\left(\sum_{j=1}^s [\bm{u}]_j \sign([\beta^\ast]_j) +\sum_{j=s+1}^d \vert [\bm{u}]_j\vert\right)  r.\]

Next, we rewrite 
\begin{equation*} 
\mathbf{G}=\left(\begin{array}{c}
    \mathbf{G}_1  \\
     \mathbf{G}_2 
\end{array}\right),\quad K=\left(\begin{array}{c}
    K_1  \\
    K_2 
\end{array}\right), \quad H=\left(\begin{array}{cc}
    H_{11}&H_{12}  \\
    H_{21}&H_{22}
\end{array}\right),\quad \bm{u}=\left(\begin{array}{c}
    \bm{u}_1  \\
    \bm{u}_2 
\end{array}\right),
\end{equation*}
where $\mathbf{G}_1,K_1,\bm{u}_1\in\mathbb{R}^s$, $\mathbf{G}_2,K_2,\bm{u}_2\in\mathbb{R}^{d-s}$, $H_{11}\in\mathbb{R}^{s\times s}$, $H_{22}\in\mathbb{R}^{(d-s)\times (d-s)}$, $H_{12}\in\mathbb{R}^{r\times (d-s)}$,
and $H_{21}\in\mathbb{R}^{(d-s)\times s}$.

Notice that the probability of the asymptotic distribution $(\beta^n)_2$ lies at $\bm{0}$ equals to the probability of $V(\bm{u})$ being minimized at $\bm{u}_2=\bm{0}$. It suffices to show that $V(\bm{u})$ is minimized at $\bm{u}_2=\bm{0}$ with a positive probability.

If $V(\bm{u})$ is minimized at $\bm{u}_2=\bm{0}$, it follow from the first-order condition that
\[ \partial V(\bm{u})=\bm{0}.\]

That is to say,
\[ -\mathbf{G}_1+\eta\Vert\beta^\ast\Vert_1 K_1+H_{11}\bm{u}_1=-\eta r \mathrm{sign}(\beta^\ast),\]
and 
\[-\eta r\bm{1} \leq -\mathbf{G}_2+\eta\Vert\beta^\ast\Vert_1 K_2+H_{21}\bm{u}_1\leq \eta r\bm{1}.\]

In this sense, we have that 
\[ \bm{u}_1 = H_{11}^{-1}\left(\mathbf{G}_1-\eta\Vert\beta^\ast\Vert_1 K_1-\eta r  \mathrm{sign}(\beta^\ast)\right),\]
\[ -\eta r\bm{1} \leq H_{21}H_{11}^{-1}\left( \mathbf{G}_1-\eta\Vert\beta^\ast\Vert_1 K_1-\eta r \mathrm{sign}(\beta^\ast)\right)-\mathbf{G}_2+\eta\Vert\beta^\ast\Vert_1 K_2\leq \eta r\bm{1}.\]

In conclusion, the probability of $V(\bm{u})$ being minimized at $\bm{u}_2=\bm{0}$ is equal to the probability of the inequality
\[ -\eta r\bm{1} \leq H_{21}H_{11}^{-1}\left( \mathbf{G}_1-\eta \Vert\beta^\ast\Vert_1K_1-\eta r \mathrm{sign}(\beta^\ast)\right)-\mathbf{G}_2+\eta \Vert\beta^\ast\Vert_1K_2\leq \eta r\bm{1}\]
holds. Since $\mathbf{G}_1$ and $\mathbf{G}_2$ are normal random variables, $V(\bm{u})$ is minimized at $\bm{u}_2=\bm{0}$ with a positive probability.
\end{proof}
\subsection{Proof of Proposition \ref{prop2}}
\begin{proof}
Notably, we have that     \begin{equation*}
    \begin{aligned}&\max_{\Vert \bm{w}\otimes\Delta\Vert\leq 1}\nabla h(\bm{z})^\top \Delta\\ 
=&\max_{\Vert\Delta^\prime\Vert\leq 1} \nabla h(\bm{z}) ^\top(\bm{w}^{-1}\otimes\Delta^\prime)\\
=&\max_{\Vert\Delta^\prime\Vert\leq 1} (\bm{w}^{-1}\otimes\nabla h(\bm{z})) ^\top{\Delta^\prime}\\
=&\Vert  \bm{w}^{-1}\otimes\nabla h(\bm{z})\Vert_\ast.\end{aligned}\end{equation*}



In this way, it follows from the Taylor expansion shown in Proposition \ref{prop1} and the uniform continuity of $\nabla h$ that
\[\mathbb{E}_{\bm{Z}\sim P}\left[\max _{\Vert \bm{w}\otimes\Delta\Vert\leq \delta}h(\bm{Z}+\Delta)\right]=\mathbb{E}_ {\bm{Z}\sim P}\left[ h(\bm{Z})\right]+\delta \mathbb{E}_{\bm{Z}\sim P}\left[ \Vert \bm{w}^{-1}\otimes\nabla h(\bm{Z})\Vert_\ast\right]+o(\delta)\]as $\delta\to 0$.\end{proof}

\subsection{Proof of Theorem \ref{unbiasedness}}
\subsubsection{A Lemma}
A precise regularization effect for adaptive adversarial training could be obtained immediately from Lemma \ref{reglemma1} and Proposition \ref{prop2}, and we state the result in the following lemma.
\begin{lemma}\label{regadaptivelemma}If the function $L(f,y)$ is twice differentiable w.r.t. the first argument, and $L^{\prime\prime}(f,y)$ is uniformly continuous w.r.t. the first argument, for $\bm{\omega}\in\mathbb{R}^d$ satisfying $[\bm{\omega}]_i\not = 0, 1\leq i\leq d$, we have that 
\begin{equation*}
\begin{aligned}
&\mathbb{E}_{\mathbb{P}_n}\left[\max _{\Vert\bm{w}\otimes\Delta\Vert_{\infty}\leq \delta}L(\langle \bm{X}+\Delta,\beta\rangle,Y)\right]\\
=&\mathbb{E}_{\mathbb{P}_n}\left[ L(\langle\bm{X},\beta\rangle,Y)\right]+\delta \Vert\bm{w}^{-1}\otimes\beta\Vert_1\mathbb{E}_{\mathbb{P}_n}\left[ \left\vert L^\prime(\langle\bm{X},\beta\rangle,Y)\right\vert\right]\\
&+\frac{1}{2}\delta^2 \Vert\bm{w}^{-1}\otimes\beta\Vert_1^2 \mathbb{E}_{\mathbb{P}_n}[L^{\prime\prime}(\langle \bm{X},\beta\rangle,Y)]+
    o(\delta^2),
\end{aligned}\end{equation*}
as $\delta\to 0$.

\end{lemma}
\subsubsection{Proof}
We begin to prove Theorem \ref{unbiasedness}.
\begin{proof}

\textbf{CASE 1: $\bm{1/2<\gamma<1}$}

Let \[\Phi_n(\beta)=\mathbb{E}_{\mathbb{P}_n}\left[\max _{\Vert \widehat{\beta}^n\otimes\Delta\Vert_\infty\leq \delta_n}L(\langle\bm{X}+\Delta,\beta\rangle,Y)\right].\]

We have that
\begin{equation}
\begin{aligned}\label{defofv3}
V_n(\bm{u})\overset{\vartriangle}{=}& n\left(\Phi_n(\beta^\ast+\frac{1}{\sqrt{n}}\bm{u})-\Phi_n(\beta^\ast)\right)\\
\overset{(a)}{=}&n\left( \mathbb{E}_{\mathbb{P}_n}\left[ L(\langle\bm{X},\beta^\ast+\frac{1}{\sqrt{n}}\bm{u}\rangle,Y)\right]-\mathbb{E}_{\mathbb{P}_n}\left[ L(\langle\bm{X},\beta^\ast\rangle,Y)\right]\right)\\
&+\eta n^{1-\gamma}\left(\left\Vert (\widehat{\beta}^n)^{-1}\otimes\left(\beta^\ast+\frac{1}{\sqrt{n}}\bm{u}\right)\right\Vert_1\mathbb{E}_{\mathbb{P}_n}\left[ \left\vert L^\prime(\langle\bm{X},\beta^\ast+\frac{1}{\sqrt{n}}\bm{u}\rangle,Y)\right\vert\right]\right.\\
&\left.-\left\Vert (\widehat{\beta}^n)^{-1}\otimes\beta^\ast\right\Vert_1\mathbb{E}_{\mathbb{P}_n}\left[ \left\vert L^\prime(\langle\bm{X},\beta^\ast\rangle,Y)\right\vert\right]\right)\\
&+\eta^2 \left(\frac{1}{2}\left\Vert(\widehat{\beta}^n)^{-1}\otimes\left(\beta^\ast+\frac{1}{\sqrt{n}}\bm{u}\right)\right\Vert_1^2 \mathbb{E}_{\mathbb{P}_n}\left[L^{\prime\prime}(\langle \bm{X},\beta^\ast+\frac{1}{\sqrt{n}}\bm{u}\rangle,Y)\right]\right.\\
&\left.-\frac{1}{2}\left\Vert(\widehat{\beta}^n)^{-1}\otimes\beta^\ast\right\Vert_1^2 \mathbb{E}_{\mathbb{P}_n}\left[L^{\prime\prime}(\langle \bm{X},\beta^\ast\rangle,Y)\right]\right)+o(1),
\end{aligned}
\end{equation}
as $n\to\infty$, where (a) comes from Lemma \ref{regadaptivelemma}.

\textbf{We analyze the terms in \eqref{defofv3} as follows.}

\textbf{For the first term in  \eqref{defofv3}}, due to the asymptotic theory of empirical risk minimization, we have that
\begin{equation}\label{firsterm3}n\left( \mathbb{E}_{\mathbb{P}_n}\left[ L(\langle\bm{X},\beta^\ast+\frac{1}{\sqrt{n}}\bm{u}\rangle,Y)\right]-\mathbb{E}_{\mathbb{P}_n}\left[ L(\langle\bm{X},\beta^\ast\rangle,Y)\right]\right)\Rightarrow -\mathbf{G}^\top \bm{u}+\frac{1}{2}\bm{u}^\top H\bm{u}, \end{equation}
as $n\to\infty$, where $\mathbf{G}\sim \mathcal{N}(\bm{0},\Sigma)$ with covariance matrix $\Sigma=\Cov_{P_\ast}\left(\nabla_\beta L(\langle \bm{X},\beta^\ast\rangle,Y)\right)$ and $H=
\mathbb{E}_{P_\ast}\left[ \nabla_{\beta}^2 L(\langle \bm{X},\beta^\ast\rangle,Y)\right]$.

\textbf{For the second term in \eqref{defofv3}}, we have the following decomposition
\begin{equation}\label{decompose3}
\begin{aligned}
& n^{1-\gamma}\left(\left\Vert(\widehat{\beta}^n)^{-1}\otimes\left(\beta^\ast+\frac{1}{\sqrt{n}}\bm{u}\right)\right\Vert_1\mathbb{E}_{\mathbb{P}_n}\left[ \left\vert L^\prime(\langle\bm{X},\beta^\ast+\frac{1}{\sqrt{n}}\bm{u}\rangle,Y)\right\vert\right]\right.\\
&\left.-\left\Vert(\widehat{\beta}^n)^{-1}\otimes\beta^\ast\right\Vert_1\mathbb{E}_{\mathbb{P}_n}\left[ \left\vert L^\prime(\langle\bm{X},\beta^\ast\rangle,Y)\right\vert\right]\right)\\
=&n^{1-\gamma}\left(\left\Vert(\widehat{\beta}^n)^{-1}\otimes\left(\beta^\ast+\frac{1}{\sqrt{n}}\bm{u}\right)\right\Vert_1-\left\Vert(\widehat{\beta}^n)^{-1}\otimes\beta^\ast\right\Vert_1\right)\mathbb{E}_{\mathbb{P}_n}\left[ \left\vert L^\prime(\langle\bm{X},\beta^\ast+\frac{1}{\sqrt{n}}\bm{u}\rangle,Y)\right\vert\right]\\
&+n^{1-\gamma}\left\Vert(\widehat{\beta}^n)^{-1}\otimes\beta^\ast\right\Vert_1\left(\mathbb{E}_{\mathbb{P}_n}\left[ \left\vert L^\prime(\langle\bm{X},\beta^\ast+\frac{1}{\sqrt{n}}\bm{u}\rangle,Y)\right\vert\right]-\mathbb{E}_{\mathbb{P}_n}\left[ \left\vert L^\prime(\langle\bm{X},\beta^\ast\rangle,Y)\right\vert\right]\right).
\end{aligned}
\end{equation}

Now we analyze three terms in \eqref{decompose3}.
For the second term in    \eqref{decompose3}, since $1/2-\gamma<0$, we could obtain that 
\begin{equation*}
\begin{aligned}
    n^{1/2-\gamma}\sqrt{n}\left\Vert(\widehat{\beta}^n)^{-1}\otimes\beta^\ast\right\Vert_1\left(\mathbb{E}_{\mathbb{P}_n}\left[ \left\vert L^\prime(\langle\bm{X},\beta^\ast+\frac{1}{\sqrt{n}}\bm{u}\rangle,Y)\right\vert\right]-\mathbb{E}_{\mathbb{P}_n}\left[ \left\vert L^\prime(\langle\bm{X},\beta^\ast\rangle,Y)\right\vert\right]\right)\\\to_p 0,
    \end{aligned}\end{equation*}
    as $n\to\infty$.

For the first term in    \eqref{decompose3}, we have that \begin{equation}\begin{aligned}
    &n^{1-\gamma}\left(\left\Vert(\widehat{\beta}^n)^{-1}\otimes\left(\beta^\ast+\frac{1}{\sqrt{n}}\bm{u}\right)\right\Vert_1-\left\Vert(\widehat{\beta}^n)^{-1}\otimes\beta^\ast\right\Vert_1\right)\\
    &=\sum_{j=1}^{d}n^{1-\gamma}\vert[\widehat{\beta}^n]_j\vert^{-1}\left(\left\vert[\beta^\ast]_j+\frac{1}{\sqrt{n}}[\bm{u}]_j\right\vert-\vert [\beta^\ast]_j\vert\right).\end{aligned}\end{equation}
    For $[\beta^\ast]_j\not=0$, since we have that $\vert [\widehat{\beta}^n]_j\vert^{-1}\to_p \vert[\beta^\ast]_j\vert^{-1},$
and
\[\sqrt{n}\left(\left\vert[\beta^\ast]_j+\frac{1}{\sqrt{n}}[\bm{u}]_j\right\vert-\vert [\beta^\ast]_j\vert\right)\to [\bm{u}]_j \sign([\beta^\ast]_j),\]
then it follows from $1/2-\gamma<0$ that 
\begin{equation*}\begin{aligned}
     &n^{1-\gamma}\vert[\widehat{\beta}^n]_j\vert^{-1}\left(\left\vert[\beta^\ast]_j+\frac{1}{\sqrt{n}}[\bm{u}]_j\right\vert-\vert [\beta^\ast]_j\vert\right)\\
     &=n^{1/2-\gamma}  \vert [\widehat{\beta}^n]_j\vert^{-1}\sqrt{n}\left(\left\vert[\beta^\ast]_j+\frac{1}{\sqrt{n}}[\bm{u}]_j\right\vert-\vert [\beta^\ast]_j\vert\right)\to_p 0,\end{aligned}\end{equation*}
as $n\to\infty$. For $[\beta^\ast]_j=0$, we have that 
\[\sqrt{n}\left(\left\vert[\beta^\ast]_j+\frac{1}{\sqrt{n}}[\bm{u}]_j\right\vert-\vert [\beta^\ast]_j\vert\right)\to\vert[\bm{u}]_j\vert,\]
and $\sqrt{n}[\widehat{\beta}^n]_j=O_p(1),$ as $n\to\infty$,
which comes from the asymptotic normality of $\widehat{\beta}^n$.

In this case, it follows from $1-\gamma>0$ that
\begin{equation} 
\begin{aligned}\label{twocases}
&n^{1-\gamma}\vert[\widehat{\beta}^n]_j\vert^{-1}\left(\left\vert[\beta^\ast]_j+\frac{1}{\sqrt{n}}[\bm{u}]_j\right\vert-\vert [\beta^\ast]_j\vert\right)\\
=&n^{1-\gamma}  \vert \sqrt{n}[\widehat{\beta}^n]_j\vert^{-1}\sqrt{n}\left(\left\vert[\beta^\ast]_j+\frac{1}{\sqrt{n}}[\bm{u}]_j\right\vert-\vert [\beta^\ast]_j\vert\right)
\to_p\begin{cases}
    \infty,&[\bm{u}]_j\not=0\\
    0,&[\bm{u}]_j=0
\end{cases},
\end{aligned}
\end{equation}
as $n\to\infty$.

\textbf{For the third term in    \eqref{defofv3}},  we have that that
\begin{equation}\label{thirdadp}
\begin{aligned}
   &\eta^2 \left(\frac{1}{2}\left\Vert(\widehat{\beta}^n)^{-1}\otimes\left(\beta^\ast+\frac{1}{\sqrt{n}}\bm{u}\right)\right\Vert_1^2 \mathbb{E}_{\mathbb{P}_n}\left[L^{\prime\prime}(\langle \bm{X},\beta^\ast+\frac{1}{\sqrt{n}}\bm{u}\rangle,Y)\right]\right.\\
&\left.-\frac{1}{2}\left\Vert(\widehat{\beta}^n)^{-1}\otimes\beta^\ast\right\Vert_1^2 \mathbb{E}_{\mathbb{P}_n}\left[L^{\prime\prime}(\langle \bm{X},\beta^\ast\rangle,Y)\right]\right)\to_p0,
\end{aligned}
\end{equation}
as $n\to\infty$.

Combining \eqref{firsterm3}-\eqref{thirdadp}, we could obtain that 
\[ V_n(\bm{u})\Rightarrow V(\bm{u}),\]
as $n\to\infty$, where \[ V(\bm{u})=\begin{cases}
 -[\mathbf{G}]_{\mathcal{A}}^\top [\bm{u}]_{\mathcal{A}}+\frac{1}{2}[\bm{u}]_{\mathcal{A}}^\top H_{11}[\bm{u}]_{\mathcal{A}},& \text{if } [\bm{u}]_j=0, \forall j\not\in\mathcal{A}\\
 \infty,&\text{otherwise}
\end{cases}.\]

It is straightforward to check that $\Psi_n(\beta)$ is convex following steps in Lemma \ref{convexity}. 
In this way, we further have that  $V_n(\bm{u})$ is convex.
Also, notice that the unique minimizer of $V(\bm{u})$ is $( H_{11}^{-1}[\mathbf{G}]_{\mathcal{A}},0)^\top$. 
It follows from the epi-convergence results in \citet{geyer1994asymptotics,fu2000asymptotics,zou2006adaptive} that 
 \[ \sqrt{n}\left( [\widetilde{\beta}^n]_{\mathcal{A}}-[\beta^\ast]_{\mathcal{A}}\right)\Rightarrow H_{11}^{-1}[\mathbf{G}]_{\mathcal{A}},\quad \sqrt{n}\left( [\widetilde{\beta}^n]_{\mathcal{A}^c}-[\beta^\ast]_{\mathcal{A}^c}\right)\Rightarrow \bm{0},\]
 indicating 
 \[ \sqrt{n}[\widetilde{\beta}^n]_{\mathcal{A}^c}\to_p \bm{0},\]
as $n\to\infty$, where $\mathbf{G}\sim \mathcal{N}(\bm{0},\Sigma)$ with covariance matrix $\Sigma=\Cov_{P_\ast}(\nabla_\beta L(\langle \bm{X},\beta^\ast\rangle,Y))$ and $H=\mathbb{E}_{P_\ast}\left[ \nabla_{\beta}^2 L(\langle \bm{X},\beta^\ast\rangle,Y)\right]$.

 \textbf{CASE 2: $\bm{\gamma=1/2}$}

When $\gamma=1/2$, we also focus on the function defined in    \eqref{defofv3}. Compared with the proof for the case $1/2<\gamma <1$, the analysis differs from    \eqref{decompose3}.

For the second term in \eqref{decompose3},  it follows from \eqref{usedinadaptive} and \eqref{taylorexpansion3} that
\begin{equation*}
\begin{aligned}
    &\sqrt{n}\left\Vert(\widehat{\beta}^n)^{-1}\otimes\beta^\ast\right\Vert_1\left(\mathbb{E}_{\mathbb{P}_n}\left[ \left\vert L^\prime(\langle\bm{X},\beta^\ast+\frac{1}{\sqrt{n}}\bm{u}\rangle,Y)\right\vert\right]-\mathbb{E}_{\mathbb{P}_n}\left[ \left\vert L^\prime(\langle\bm{X},\beta^\ast\rangle,Y)\right\vert\right]\right)\\
    &=\left(\sum_{j\in \mathcal{A}} \left\vert\frac{[\beta^{\ast}]_j}{[\widehat{\beta}^n]_j}\right\vert\right) \sqrt{n}\left(\mathbb{E}_{\mathbb{P}_n}\left[ \left\vert L^\prime(\langle\bm{X},\beta^\ast+\frac{1}{\sqrt{n}}\bm{u}\rangle,Y)\right\vert\right]-\mathbb{E}_{\mathbb{P}_n}\left[ \left\vert L^\prime(\langle\bm{X},\beta^\ast\rangle,Y)\right\vert\right]\right)\\
    &\to_p s \mathbb{E}_{P_\ast}\left[\sign(L^\prime (\langle \bm{X},\beta^\ast\rangle,Y)) \nabla_\beta  L^\prime (\langle \bm{X},\beta^\ast\rangle,Y)\right]^\top \bm{u},
    \end{aligned}\end{equation*}
        as $n\to\infty$,
    where $s$ is the number of nonzero components in $\beta^\ast$.

For the first term in    \eqref{decompose3}, we have that \begin{equation*}\begin{aligned}
    &\sqrt{n}\left(\left\Vert(\widehat{\beta}^n)^{-1}\otimes\left(\beta^\ast+\frac{1}{\sqrt{n}}\bm{u}\right)\right\Vert_1-\left\Vert(\widehat{\beta}^n)^{-1}\otimes\beta^\ast\right\Vert_1\right)\\
    &=\sum_{j=1}^{d}\sqrt{n}\vert[\widehat{\beta}^n]_j\vert^{-1}\left(\left\vert[\beta^\ast]_j+\frac{1}{\sqrt{n}}[\bm{u}]_j\right\vert-\vert [\beta^\ast]_j\vert\right).\end{aligned}\end{equation*}
    For $[\beta^\ast]_j\not=0$, since we have that 
\[ \vert [\widehat{\beta}^n]_j\vert^{-1}\to_p \vert[\beta^\ast]_j\vert^{-1},\]
and
\[\sqrt{n}\left(\left\vert[\beta^\ast]_j+\frac{1}{\sqrt{n}}[\bm{u}]_j\right\vert-\vert [\beta^\ast]_j\vert\right)\to [\bm{u}]_j \sign([\beta^\ast]_j),\]
    as $n\to\infty$,
then it follows from $\gamma=1/2$ that 
\begin{equation*}\begin{aligned}
     &\sqrt{n}\vert[\widehat{\beta}^n]_j\vert^{-1}\left(\left\vert[\beta^\ast]_j+\frac{1}{\sqrt{n}}[\bm{u}]_j\right\vert-\vert [\beta^\ast]_j\vert\right)\\
     &= \vert [\widehat{\beta}^n]_j\vert^{-1}\sqrt{n}\left(\left\vert[\beta^\ast]_j+\frac{1}{\sqrt{n}}[\bm{u}]_j\right\vert-\vert [\beta^\ast]_j\vert\right)\to_p \sign([\beta^\ast]_j) \vert[\beta^\ast]_j\vert^{-1}[\bm{u}]_j=[\beta^\ast]_j^{-1}[\bm{u}]_j.\end{aligned}\end{equation*}
For $[\beta^\ast]_j=0$, we have that 
\[\sqrt{n}\left(\left\vert[\beta^\ast]_j+\frac{1}{\sqrt{n}}[\bm{u}]_j\right\vert-\vert [\beta^\ast]_j\vert\right)\to\vert[\bm{u}]_j\vert,\]
\[[\beta^\ast]_j\to_p 0,\]
as $n\to\infty$.

In this case, we have that
\begin{equation} 
\begin{aligned}\label{twocasesaat}
\sqrt{n}\vert[\widehat{\beta}^n]_j\vert^{-1}\left(\left\vert[\beta^\ast]_j+\frac{1}{\sqrt{n}}[\bm{u}]_j\right\vert-\vert [\beta^\ast]_j\vert\right)
\to_p\begin{cases}
    \infty,&[\bm{u}]_j\not=0\\
    0,&[\bm{u}]_j=0
\end{cases},
\end{aligned}
\end{equation}
as $n\to\infty$.

In this way, we could conclude that 
\[ V_n(\bm{u})\Rightarrow V(\bm{u}),\]
as $n\to\infty$, where \[ V(\bm{u})=\begin{cases}
 (-[\mathbf{G}]_{\mathcal{A}}+\eta r[{\beta^\ast}^{-1}]_{\mathcal{A}}+\eta s[K]_{\mathcal{A}})^\top [\bm{u}]_{\mathcal{A}}+\frac{1}{2}[\bm{u}]_{\mathcal{A}}^\top H_{11}[\bm{u}]_{\mathcal{A}},& \text{if } [\bm{u}]_j=0, \forall j\not\in\mathcal{A}\\
 \infty,&\text{otherwise}
\end{cases}.\]
where $r=\mathbb{E}_{P_\ast}\left[ \left\vert L^\prime(\langle\bm{X},\beta^\ast\rangle,Y)\right\vert\right], K=\mathbb{E}_{P_\ast}\left[\sign(L^\prime (\langle \bm{X},\beta^\ast\rangle,Y)) \nabla_\beta  L^\prime (\langle \bm{X},\beta^\ast\rangle,Y)\right]$.

Then, we could obtain the following:
 \[\sqrt{n}\left( [\widetilde{\beta}^n]_{\mathcal{A}}-[\beta^\ast]_{\mathcal{A}}\right)\Rightarrow H_{11}^{-1}(-\eta r[{\beta^\ast}^{-1}]_{\mathcal{A}}-\eta s[K]_{\mathcal{A}}+[\mathbf{G}]_{\mathcal{A}}),\]
 \[\sqrt{n}\left( [\widetilde{\beta}^n]_{\mathcal{A}^c}-[\beta^\ast]_{\mathcal{A}^c}\right)\Rightarrow \bm{0},\]
 indicating
 \[ \sqrt{n}[\widetilde{\beta}^n]_{\mathcal{A}^c}\to_p \bm{0},\]
 as $n\to\infty$.

 \textbf{CASE 3: $\bm{\gamma<1/2}$}
 
We have that 
 \begin{equation}\label{case3split}
\begin{aligned}
V_n(\bm{u})
\overset{\vartriangle}{=}&n^{2\gamma}\left(\Phi_n(\beta^\ast+\frac{1}{n^\gamma}\bm{u})-\Phi_n(\beta^\ast)\right)\\
=&n^{2\gamma}\left( \mathbb{E}_{\mathbb{P}_n}\left[ L(\langle\bm{X},\beta^\ast+\frac{1}{n^\gamma}\bm{u}\rangle,Y)\right]-\mathbb{E}_{\mathbb{P}_n}\left[ L(\langle\bm{X},\beta^\ast\rangle,Y)\right]\right)\\
&+\eta n^\gamma\left(\left\Vert(\widehat{\beta}^n)^{-1}\otimes\left(\beta^\ast+\frac{1}{n^\gamma}\bm{u}\right)\right\Vert_1\mathbb{E}_{\mathbb{P}_n}\left[ \left\vert L^\prime(\langle\bm{X},\beta^\ast+\frac{1}{n^\gamma}\bm{u}\rangle,Y)\right\vert\right]\right.\\
&\left.-\left\Vert(\widehat{\beta}^n)^{-1}\otimes\beta^\ast\right\Vert_1\mathbb{E}_{\mathbb{P}_n}\left[ \left\vert L^\prime(\langle\bm{X},\beta^\ast\rangle,Y)\right\vert\right]\right)\\
&+\eta^2 \left(\mathbb{E}_{\mathbb{P}_n}\left[P(\bm{X},\beta^\ast+\frac{1}{n^\gamma}\bm{u},Y)\right]-\mathbb{E}_{\mathbb{P}_n}[P(\bm{X},\beta^\ast,Y)]\right)+o(1),
\end{aligned}
\end{equation}
as $n\to\infty$.

\textbf{We analyze terms in \eqref{case3split} as follows.}

\textbf{For the first term} in    \eqref{case3split}, we have that 
\begin{equation*}\begin{aligned}
    n^{2\gamma}\left( \mathbb{E}_{\mathbb{P}_n}\left[ L(\langle\bm{X},\beta^\ast+\frac{1}{n^\gamma}\bm{u}\rangle,Y)\right]-\mathbb{E}_{\mathbb{P}_n}\left[ L(\langle\bm{X},\beta^\ast\rangle,Y)\right]\right)
\Rightarrow \frac{1}{2}\bm{u}^\top H\bm{u},
\end{aligned}\end{equation*}
as $n\to\infty$.

\textbf{The second term and the third term }in    \eqref{case3split} have the same limits as in \textbf{CASE 2}, 
In this way, we could conclude that 
\[ V_n(\bm{u})\Rightarrow V(\bm{u}),\]
where \[ V(\bm{u})=\begin{cases}
 (\eta r[{\beta^\ast}^{-1}]_{\mathcal{A}}+\eta s[K]_{\mathcal{A}})^\top [\bm{u}]_{\mathcal{A}}+\frac{1}{2}[\bm{u}]_{\mathcal{A}}^\top H_{11}[\bm{u}]_{\mathcal{A}},& \text{if } [\bm{u}]_j=0, \forall j\not\in\mathcal{A}\\
 \infty,&\text{otherwise}
\end{cases}.\]
Then, we could obtain the following:
 \[n^{\gamma}\left( [\widetilde{\beta}^n]_{\mathcal{A}}-[\beta^\ast]_{\mathcal{A}}\right)\Rightarrow H_{11}^{-1}(-\eta r[{\beta^\ast}^{-1}]_{\mathcal{A}}-\eta s[K]_{\mathcal{A}}),\]\[n^{\gamma}\left( [\widetilde{\beta}^n]_{\mathcal{A}^c}-[\beta^\ast]_{\mathcal{A}^c}\right)\Rightarrow \bm{0},\]
  indicating
 \[ n^{\gamma}[\widetilde{\beta}^n]_{\mathcal{A}^c}\to_p \bm{0},\]
 as $n\to\infty$.
\end{proof}

\subsection{Proof of Proposition \ref{inconsistency}}
\begin{proof}
$\mathcal{A}_n=\mathcal{A}$ implies $[\beta^n]_j=0, \forall j\not \in \mathcal{A}$, indicating \[P_\ast(\mathcal{A}_n=\mathcal{A})\leq P_\ast(\sqrt{n}[\beta^n]_j=0, \forall j \not\in \mathcal{A}).\] 

If we denote $\bm{u}^\ast$ as the minimizer of the function $V(\bm{u})$ defined in Theorem \ref{asyminfty}, then Theorem \ref{asyminfty} implies that \[\lim\inf_n P_\ast(\sqrt{n}[\beta^n]_j=0, \forall j \not\in \mathcal{A})\leq P_\ast([\bm{u}^\ast]_j=0,\forall j\not\in \mathcal{A}).\]

Thus, it suffices to show $c=P_\ast([\bm{u}^\ast]_j=0,\forall j\not\in \mathcal{A})<1$.

\textbf{CASE 1: $\bm{\gamma=1/2}$}

If $[\bm{u}^\ast]_j=0$ for all $j\not\in\mathcal{A}$,  it follows from the first-order condition that 
\[ -[\mathbf{G}]_{\mathcal{A}}+\eta\Vert\beta^\ast\Vert_1 [K]_{\mathcal{A}}+H_{11} [\bm{u}^\ast]_{\mathcal{A}}+\eta r \sign([\beta^\ast]_{\mathcal{A}})=\bm{0}, \]
and 
\[-\eta r \bm{1}\leq-[\mathbf{G}]_{\mathcal{A}^c}+\eta \Vert\beta^\ast\Vert_1[K]_{\mathcal{A}^c}+ H_{21} [\bm{u}^\ast]_{\mathcal{A}}\leq \eta r \bm{1}.\]

In this way, we have \begin{equation*}\begin{aligned}
    c&= P_\ast\left( -\eta r \bm{1}\leq-[\mathbf{G}]_{\mathcal{A}^c}+\eta\Vert\beta^\ast\Vert_1 [K]_{\mathcal{A}^c}\right.\\
    &\left.+ H_{21}H_{11}^{-1}([\mathbf{G}]_{\mathcal{A}}-\eta\Vert\beta^\ast\Vert_1 [K]_{\mathcal{A}}-\eta r \sign([\beta^\ast]_{\mathcal{A}}))  \leq \eta r \bm{1}\right)
    <1.\end{aligned}\end{equation*}

\textbf{CASE 2: $\bm{\gamma>1/2}$}

    It is straightforward to see $\bm{u}^\ast=H^{-1}\mathbf{G}$, so $c=0$.

\textbf{CASE 3: $\bm{0<\gamma<1/2}$}

It is straightforward to see $\bm{u}^\ast$ is a nonrandom quantity, so $c=0$.
  
\end{proof}

\subsection{Proof of Theorem \ref{consistency}}
\begin{proof}
\textbf{CASE 1: $\bm{1/2<\gamma<1}$}

We consider the following function
\begin{equation}\label{definitionofgamma}\Gamma(\bm{u})=\mathbb{E}_{\mathbb{P}_n}\left[\max _{\Vert\widehat{\beta}^n\otimes\Delta\Vert_\infty\leq \delta_n}L(\langle\bm{X}+\Delta,\beta^\ast+\frac{1}{\sqrt{n}}\bm{u}\rangle,Y)\right],\end{equation}
where $\Gamma(\bm{u})$ is minimized at $\sqrt{n}(\widetilde{\beta}^n-\beta^\ast)$.

It follows from Lemma \ref{reglemma1} that 
\begin{equation*}\begin{aligned}
    \Gamma(\bm{u}) =& \mathbb{E}_{\mathbb{P}_n}\left[ L(\langle\bm{X},\beta^\ast+\frac{1}{\sqrt{n}}\bm{u}\rangle,Y)\right]\\
&+\eta n^{-\gamma}\left\Vert(\widehat{\beta}^n)^{-1}\otimes\left(\beta^\ast+\frac{1}{\sqrt{n}}\bm{u}\right)\right\Vert_1\mathbb{E}_{\mathbb{P}_n}\left[ \left\vert L^\prime(\langle\bm{X},\beta^\ast+\frac{1}{\sqrt{n}}\bm{u}\rangle,Y)\right\vert\right]\\
&+\frac{1}{2}\eta^2 n^{-2\gamma}\left\Vert(\widehat{\beta}^n)^{-1}\otimes\left(\beta^\ast+\frac{1}{\sqrt{n}}\bm{u}\right)\right\Vert_1^2 \mathbb{E}_{\mathbb{P}_n}\left[L^{\prime\prime}(\langle\bm{X},\beta^\ast+\frac{1}{\sqrt{n}}\bm{u}\rangle,Y)\right]+o(n^{-2\gamma}), \end{aligned}\end{equation*}
as $n\to\infty$.

If $j\in\mathcal{A}$, Theorem \ref{unbiasedness} implies that $[\widetilde{\beta}^n]_j\to_p[\beta^\ast]_j$ as $n\to\infty$, indicating that $P_\ast(j\in \widetilde{\mathcal{A}}_n)\to 1$.

Thus, it suffices to show: if $j\not\in\mathcal{A}$, then 
$P_\ast(j\in\widetilde{\mathcal{A}}_n)\to 0$.

If $j\not\in\mathcal{A}, j\in\widetilde{\mathcal{A}}_n$, it follows from the the first-order condition of $\Gamma(\bm{u})$ that 
\begin{equation}\begin{aligned} \label{KKT}&\sqrt{n}\mathbb{E}_{\mathbb{P}_n}\left[ L^\prime(\langle\bm{X},\widetilde{\beta}^n\rangle,Y) [\bm{X}]_j\right]\\
&+\eta n^{1-\gamma}\left(\sum_{k\in\widetilde{\mathcal{A}}_n } \left\vert\frac{[\widetilde{\beta}^n]_k}{\sqrt{n} [\widehat{\beta}^n]_k}\right\vert\mathbb{E}_{\mathbb{P}_n} \left[ \sign(L^\prime (\langle \bm{X},\widetilde{\beta}^n\rangle,Y))  L^{\prime\prime} (\langle \bm{X},\widetilde{\beta}^n\rangle,Y)[\bm{X}]_j\right]\right.\\
&\left.+ \frac{1}{\vert\sqrt{n}[\widehat{\beta}^n]_j\vert}\sign ([\widetilde{\beta}^n]_j)
\mathbb{E}_{\mathbb{P}_n}\left[\left\vert L^\prime(\langle\bm{X},\widetilde{\beta}^n\rangle,Y)\right\vert\right]\right)+o(1)=0,\end{aligned}\end{equation}
as $n\to\infty$.

\textbf{We analyze the terms in  \eqref{KKT} as follows.}

\textbf{For the first term in  \eqref{KKT}}, we have that
\begin{equation}\begin{aligned}\label{expansionaa}
&\mathbb{E}_{\mathbb{P}_n}\left[ L^\prime(\langle\bm{X},\widetilde{\beta}^n\rangle,Y) [\bm{X}]_j\right]\\
=&\mathbb{E}_{\mathbb{P}_n}\left[ L^\prime(\langle\bm{X},\beta^\ast\rangle,Y) [\bm{X}]_j\right]+(\widetilde{\beta}^n-\beta^\ast)^\top\mathbb{E}_{\mathbb{P}_n}\left[ L^{\prime\prime}(\langle\bm{X},\beta^\ast\rangle,Y) [\bm{X}]_j\bm{X}\right]+o_p(\Vert \widetilde{\beta}^n-\beta^\ast\Vert_2),
\end{aligned}\end{equation}
as $n\to\infty$. It follows from the central limit theorem that
\begin{equation}\label{centrallimit}\sqrt{n}\mathbb{E}_{\mathbb{P}_n}\left[ L^\prime(\langle\bm{X},\beta^\ast\rangle,Y) [\bm{X}]_j\right]\Rightarrow \mathcal{N}\left(0, \Cov_{P_\ast} (L^\prime(\langle\bm{X},\beta^\ast\rangle,Y) [\bm{X}]_j)\right),\end{equation}
as $n\to\infty$. Due to  Theorem \ref{unbiasedness}, we know that $\sqrt{n}(\widetilde{\beta}^n-\beta^\ast)$ converges to some distribution, and \[\mathbb{E}_{\mathbb{P}_n}\left[ L^{\prime\prime}(\langle\bm{X},\beta^\ast\rangle,Y) [\bm{X}]_j\bm{X}\right]\to_p\mathbb{E}_{P_\ast}\left[ L^{\prime\prime}(\langle\bm{X},\beta^\ast\rangle,Y) [\bm{X}]_j\bm{X}\right],\] 
as $n\to\infty$. In this way, we have that $\sqrt{n}(\widetilde{\beta}^n-\beta^\ast)^\top\mathbb{E}_{\mathbb{P}_n}\left[ L^{\prime\prime}(\langle\bm{X},\beta^\ast\rangle,Y) [\bm{X}]_j\bm{X}\right]$ converges to some distribution as $n\to\infty$. Then, notice that $o_p(\sqrt{n}\Vert \widetilde{\beta}^n-\beta^\ast\Vert_2)=O_p(1)$. In conclusion, the first term in    \eqref{KKT} is bounded in probability.

\textbf{For second term in    \eqref{KKT}}, we have that $\gamma<1$ and $\sqrt{n}[\widehat{\beta}^n]_j=O_p(1)$. In this way, the second term is not bounded in probability.

Thus, 
\[P_\ast(j\in\widetilde{\mathcal{A}}_n)\leq P_\ast( \text{the equation }\eqref{KKT} \text{ holds})\to 0.\]

In this way, we have that 
\[ \lim_n P_\ast(\widetilde{\mathcal{A}}_n=\mathcal{A})=1\]

\textbf{CASE 2: $\bm{\gamma=1/2}$}

We also focus on the function $\Gamma(\bm{u})$ defined in  \eqref{definitionofgamma}.
Notice we have that
\begin{equation*}\begin{aligned}
    \Gamma(\bm{u}) =& \mathbb{E}_{\mathbb{P}_n}\left[ L(\langle\bm{X},\beta^\ast+\frac{1}{\sqrt{n}}\bm{u}\rangle,Y)\right]\\
&+\eta \frac{1}{\sqrt{n}}\left\Vert(\widehat{\beta}^n)^{-1}\otimes\left(\beta^\ast+\frac{1}{\sqrt{n}}\bm{u}\right)\right\Vert_1\mathbb{E}_{\mathbb{P}_n}\left[ \left\vert L^\prime(\langle\bm{X},\beta^\ast+\frac{1}{\sqrt{n}}\bm{u}\rangle,Y)\right\vert\right]\\
&+\frac{1}{2}\eta^2 n^{-1}\left\Vert(\widehat{\beta}^n)^{-1}\otimes\left(\beta^\ast+\frac{1}{\sqrt{n}}\bm{u}\right)\right\Vert_1^2 \mathbb{E}_{\mathbb{P}_n}\left[L^{\prime\prime}(\langle\bm{X},\beta^\ast+\frac{1}{\sqrt{n}}\bm{u}\rangle,Y)\right]+o(n^{-1}),\end{aligned}\end{equation*}
as $n\to\infty$.

If $j\in\mathcal{A}$, Theorem \ref{unbiasedness} implies that $[\widetilde{\beta}^n]_j\to_p[\beta^\ast]_j$, indicating that $P_\ast(j\in \widetilde{\mathcal{A}}_n)\to 1$.

Thus, it suffices to show: if $j\not\in\mathcal{A}$, then 
$P_\ast(j\in\widetilde{\mathcal{A}}_n)\to 0$.

Similar to \textbf{CASE 1}, when $j\not\in\mathcal{A}, j\in\widetilde{\mathcal{A}}_n$, we check the first-order condition as follows,
\begin{equation}\begin{aligned} \label{KKT2}\sqrt{n}&\mathbb{E}_{\mathbb{P}_n}\left[ L^\prime(\langle\bm{X},\widetilde{\beta}^n\rangle,Y) [\bm{X}]_j\right]\\
&+\eta\left(\sum_{k\in\widetilde{\mathcal{A}}_n } \left\vert\frac{[\widetilde{\beta}^n]_k}{ [\widehat{\beta}^n]_k}\right\vert\mathbb{E}_{\mathbb{P}_n} \left[ \sign(L^\prime (\langle \bm{X},\widetilde{\beta}^n\rangle,Y))  L^{\prime\prime} (\langle \bm{X},\widetilde{\beta}^n\rangle,Y)[\bm{X}]_j\right]\right.\\
&\left.+ \frac{1}{\vert[\widehat{\beta}^n]_j\vert} \sign ([\widetilde{\beta}^n]_j)
\mathbb{E}_{\mathbb{P}_n}\left[\left\vert L^\prime(\langle\bm{X},\widetilde{\beta}^n\rangle,Y)\right\vert\right]\right)+o(1)=0,\end{aligned}\end{equation}
as $n\to\infty$.

Since $[\widehat{\beta}^n]_j\to_p 0$ as $n\to\infty$, we know the second term of  \eqref{KKT2} is not bounded in probability. It follows the calculations in \textbf{CASE 1} that the first term is bounded in probability. Thus, the same conclusion could be made, i.e., 
\[ \lim_n P_\ast(\widetilde{\mathcal{A}}_n=\mathcal{A})=1.\]

\textbf{CASE 3: $\bm{0<\gamma<1/2}$}

We consider the following function
\begin{equation}\label{definitionofgamma2}\Gamma(\bm{u})=\mathbb{E}_{\mathbb{P}_n}\left[\max _{\Vert\widehat{\beta}^n\otimes\Delta\Vert_\infty\leq \delta_n}L(\langle\bm{X}+\Delta,\beta^\ast+\frac{1}{n^{\gamma}}\bm{u}\rangle,Y)\right],\end{equation}
where $\Gamma(\bm{u})$ is minimized at $n^{\gamma}(\widetilde{\beta}^n-\beta^\ast)$.

It follows from Lemma \ref{reglemma1} that 
\begin{equation*}\begin{aligned}
    \Gamma(\bm{u}) =& \mathbb{E}_{\mathbb{P}_n}\left[ L(\langle\bm{X},\beta^\ast+\frac{1}{n^\gamma}\bm{u}\rangle,Y)\right]\\
&+\eta n^{-\gamma}\left\Vert(\widehat{\beta}^n)^{-1}\otimes\left(\beta^\ast+\frac{1}{n^\gamma}\bm{u}\right)\right\Vert_1\mathbb{E}_{\mathbb{P}_n}\left[ \left\vert L^\prime(\langle\bm{X},\beta^\ast+\frac{1}{n^\gamma}\bm{u}\rangle,Y)\right\vert\right]\\
&+\frac{1}{2}\eta^2 n^{-2\gamma}\left\Vert(\widehat{\beta}^n)^{-1}\otimes\left(\beta^\ast+\frac{1}{n^\gamma}\bm{u}\right)\right\Vert_1^2 \mathbb{E}_{\mathbb{P}_n}\left[L^{\prime\prime}(\langle\bm{X},\beta^\ast+\frac{1}{n^\gamma}\bm{u}\rangle,Y)\right]+o(n^{-2\gamma}), \end{aligned}\end{equation*}
as $n\to\infty$.

If $j\in\mathcal{A}$, Theorem \ref{unbiasedness} implies that $[\widetilde{\beta}^n]_j\to_p[\beta^\ast]_j$, indicating that $P_\ast(j\in \widetilde{\mathcal{A}}_n)\to 1$.

Thus, it suffices to show: if $j\not\in\mathcal{A}$, then 
$P_\ast(j\in\widetilde{\mathcal{A}}_n)\to 0$.

Similar to \textbf{CASE 1} and \textbf{CASE 2}, when $j\not\in\mathcal{A}, j\in\widetilde{\mathcal{A}}_n$, we check the first-order condition as follows,

\begin{equation}\begin{aligned} \label{KKT3}n^{\gamma}&\mathbb{E}_{\mathbb{P}_n}\left[ L^\prime(\langle\bm{X},\widetilde{\beta}^n\rangle,Y) [\bm{X}]_j\right]\\
&+\eta \left(\sum_{k\in\widetilde{\mathcal{A}}_n } \left\vert\frac{[\widetilde{\beta}^n]_k}{ [\widehat{\beta}^n]_k}\right\vert\mathbb{E}_{\mathbb{P}_n} \left[ \sign(L^\prime (\langle \bm{X},\widetilde{\beta}^n\rangle,Y))  L^{\prime\prime} (\langle \bm{X},\widetilde{\beta}^n\rangle,Y)[\bm{X}]_j\right]\right.\\
&\left.+ \frac{1}{\vert[\widehat{\beta}^n]_j\vert}
\mathbb{E}_{\mathbb{P}_n}\left[\left\vert L^\prime(\langle\bm{X},\widetilde{\beta}^n\rangle,Y)\right\vert\right]\right)+o(1)=0,\end{aligned}\end{equation}
as $n\to\infty$.

Recall that we have the expansion as shown \eqref{expansionaa} for the first term in \eqref{KKT3}. It follows from \eqref{centrallimit} that $n^{\gamma}\mathbb{E}_{\mathbb{P}_n}\left[ L^\prime(\langle\bm{X},\beta^\ast\rangle,Y) [\bm{X}]_j\right]$ converges to zero in probability.
Due to  Theorem \ref{unbiasedness}, we know that $n^{\gamma}(\widetilde{\beta}^n-\beta^\ast)$ converges in distribution.
In this way, we have that $n^{\gamma}(\widetilde{\beta}^n-\beta^\ast)^\top\mathbb{E}_{\mathbb{P}_n}\left[ L^{\prime\prime}(\langle\bm{X},\beta^\ast\rangle,Y) [\bm{X}]_j\bm{X}\right]$ converges in distribution.
Then, notice that $o_p(n^{\gamma}\Vert \widetilde{\beta}^n-\beta^\ast\Vert_2)=O_p(1)$. In conclusion, the first term in \eqref{KKT3} is bounded in probability.
For the second term in    \eqref{KKT3}, the analysis is the same as that of \textbf{CASE 2}. Thus, the same conclusion could be made, i.e., 
\[ \lim_n P_\ast(\widetilde{\mathcal{A}}_n=\mathcal{A})=1.\]

\end{proof}
\subsection{Proof of Corollary \ref{asymcorolaarylinear}}
\begin{proof}
We have that
\[\Sigma=\Cov_{P_\ast}(\nabla_\beta L(\langle \bm{X},\beta^\ast\rangle,Y))=\Cov_{P_\ast}\left(\bm{X}(\langle\bm{X},\beta^\ast\rangle-Y)\right)=\sigma^2 \mathbb{E}_{P_\ast}[\bm{X}\bm{X}^\top]=\sigma^2   \bm{I}_d,\]
\[H=\mathbb{E}_{P_\ast}\left[\nabla_{\beta}^2 L(\langle \bm{X},\beta^\ast\rangle,Y)\right]=\mathbb{E}_{P_\ast}[\bm{X}\bm{X}^\top]=\bm{I}_d,\]
\[r=\mathbb{E}_{P_\ast}\left[\left\vert L^\prime (\langle \bm{X},\beta^\ast\rangle,Y)\right\vert\right]=\mathbb{E}_{P_\ast}\left[\left\vert \langle\bm{X},\beta^\ast\rangle-Y\right\vert\right]=\sigma\sqrt{\frac{2}{\pi}},\]
\begin{equation*}\begin{aligned}
    K&=\mathbb{E}_{P_\ast}\left[\sign(L^\prime (\langle \bm{X},\beta^\ast\rangle,Y)) \nabla_\beta  L^\prime (\langle \bm{X},\beta^\ast\rangle,Y)\right]\\
    &=\mathbb{E}_{P_\ast}\left[\sign(\langle\bm{X},\beta^\ast\rangle-Y) \bm{X}\right]=\bm{0}.\end{aligned}\end{equation*}

The results are obtained by plugging the expressions above into Theorem \ref{unbiasedness}.
\end{proof}
\subsection{Proof of Corollary \ref{tractable2}}
\begin{proof}
    Motivated by the proof of Theorem 4 in \citet{ribeiro2023regularization}, we define the Fenchel conjugate of $L$ as
    \[ L^\ast(u,y)\overset{\vartriangle}{=}\max_{z}\{uz-L(z,y)\}.\]

    It follows from the Fenchel-Moreau theorem that 
    \begin{equation*}
    \begin{aligned}
        \max_{\Vert\bm{w}\otimes\Delta\Vert\leq \delta} L(\langle \bm{x}+\Delta,\beta\rangle,y)&=\max_{\Vert\bm{w}\otimes\Delta\Vert\leq \delta} \max_{u}\left\{u\langle\bm{x},\beta\rangle+u\langle\Delta,\beta\rangle-L^\ast(u,y)\right\}\\
        &=\max_{u}\left\{u\langle\bm{x},\beta\rangle+\max_{\Vert\bm{w}\otimes\Delta\Vert\leq \delta}u\langle\Delta,\beta\rangle-L^\ast(u,y)\right\}\\
        &=\max_{u}\left\{u\langle\bm{x},\beta\rangle+\max_{\Vert\Delta\Vert\leq \delta}u\langle\Delta,\bm{w}^{-1}\otimes\beta\rangle-L^\ast(u,y)\right\}\\
        &=\max_{u}\left\{u\langle\bm{x},\beta\rangle+\max_{s=\left\{1,-1\right\}}\left\{su\delta\Vert\bm{w}^{-1}\otimes\beta\Vert_\ast\right\}-L^\ast(u,y)\right\}\\
        &=\max_{u}\max_{s=\left\{1,-1\right\}}\left\{u\langle\bm{x},\beta\rangle+su\delta\Vert\bm{w}^{-1}\otimes\beta\Vert_\ast-L^\ast(u,y)\right\}\\
        &=\max_{s=\left\{1,-1\right\}}\max_{u}\left\{u\left(\langle\bm{x},\beta\rangle+s\delta\Vert\bm{w}^{-1}\otimes\beta\Vert_\ast\right)-L^\ast(u,y)\right\}\\
        &=\max_{s\in\{-1,1\}}L\left(\langle \bm{x},\beta\rangle+\delta s\Vert\bm{w}^{-1}\otimes \beta\Vert_{\ast},y\right).
    \end{aligned}
    \end{equation*}
    \end{proof}
  
\section{Asymptotic Distribution of Adversarial Training Estimator under $\ell_p$-perturbation}\label{pperturbation}
This section will focus on the asymptotic distribution of adversarial training estimator under $\ell_p$-perturbation, where $p\in(1,\infty)$.
The adversarial training estimator under $\ell_p$-perturbation is defined by the following optimization problem:
\begin{equation}\label{advestimator2}\beta^n\in\arg\min_{\beta\in B}\mathbb{E}_{\mathbb{P}_n}\left[\max _{\Vert\Delta\Vert_p\leq \delta_n}L(\langle \bm{X}+\Delta,\beta\rangle,Y)\right],\end{equation}
where $p\in(1,\infty)$, $B$ is a convex compact subset of $\mathbb{R}^d$, and $\delta_n=\eta/n^\gamma$, $\eta,\gamma>0$.

We present the asymptotic distribution of the estimator $\beta^n$ along with its proof below.

\begin{theorem}[Asymptotic Distribution] \label{asymfinite}
Under Assumption \ref{assume1main}, for the adversarial training estimator $\beta^n$ defined in \eqref{advestimator2}, if the perturbation magnitude is chosen as $\delta_n=\eta/n^{\gamma}$, $\eta,\gamma>0$,
we have the following convergence as $n\to\infty$:
 \begin{itemize}[itemsep=-2ex]
        \item If $\gamma>1/2$, then 
        \begin{equation*} \sqrt{n}(\beta^n-\beta^\ast)\Rightarrow H^{-1}\mathbf{G};\end{equation*}
        \item If $\gamma=1/2$, then 
        \begin{equation*}\begin{aligned}
             \sqrt{n}(\beta^n-\beta^\ast)
             \Rightarrow H^{-1}\left(-\eta \nabla_{\beta}\left( \Vert\beta^\ast\Vert_q\mathbb{E}_{P_\ast}[\vert L^\prime (\langle \bm{X},\beta^\ast\rangle,Y)\vert]\right)+\mathbf{G}\right);\end{aligned}\end{equation*}
        \item If $0<\gamma<1/2$, then 
        \begin{equation*}\begin{aligned} n^{\gamma}(\beta^n-\beta^\ast)
        \Rightarrow&-\eta H^{-1}\nabla_{\beta}\left( \Vert\beta^\ast\Vert_q\mathbb{E}_{P_\ast}[\vert L^\prime (\langle \bm{X},\beta\rangle,Y)\vert]\right),\end{aligned}\end{equation*}
    \end{itemize}
    where $1/p+1/q=1$, $\mathbf{G}\sim \mathcal{N}(\bm{0},\Sigma)$ with the covariance matrix $\Sigma=\Cov_{P_\ast}(\nabla_\beta L(\langle \bm{X},\beta^\ast\rangle,Y))$, 
    and $H=\mathbb{E}_{P_\ast}\left[ \nabla_{\beta}^2L(\langle \bm{X},\beta^\ast\rangle,Y)\right]$. \end{theorem}
\begin{proof}
Lemma \ref{reglemma1} implies that
\begin{equation}
\begin{aligned}
&\mathbb{E}_{\mathbb{P}_n}\left[\max _{\Vert\Delta\Vert_{p}\leq \delta_n}L(\langle \bm{X}+\Delta,\beta\rangle,Y)\right]\\
=&\mathbb{E}_{\mathbb{P}_n}\left[ L(\langle\bm{X},\beta\rangle,Y)\right]+\delta_n \Vert\beta\Vert_q\mathbb{E}_{\mathbb{P}_n}\left[ \vert L^{\prime}(\langle \bm{X},\beta\rangle,Y)\vert\right]+\frac{1}{2}\delta^2_n\Vert\beta\Vert_q^2 \mathbb{E}_{\mathbb{P}_n}[L^{\prime\prime}(\langle \bm{X},\beta\rangle,Y)]+
    o(\delta^2_n),
\end{aligned}\end{equation}
as $\delta\to 0$.

The proof of Theorem \ref{asymfinite} follows a similar structure to that of Theorem \ref{asyminfty}. As an illustrative example, we present the case where $\gamma=1/2$.

We define the following function:
\[ \Psi_n(\beta)=\mathbb{E}_{\mathbb{P}_n}\left[\max _{\Vert\Delta\Vert_p\leq \delta_n}L(\langle\bm{X}+\Delta,\beta\rangle,Y)\right],\]
based on which we further have that 
\begin{equation}
\begin{aligned}\label{defofvfinte}
&V_n(\bm{u})\\
\overset{\vartriangle}{=}&n\left(\Psi_n(\beta^\ast+\frac{1}{\sqrt{n}}\bm{u})-\Psi_n(\beta^\ast)\right)\\
=&n\left( \mathbb{E}_{\mathbb{P}_n}\left[ L(\langle\bm{X},\beta^\ast+\frac{1}{\sqrt{n}}\bm{u}\rangle,Y)\right]-\mathbb{E}_{\mathbb{P}_n}\left[ L(\langle\bm{X},\beta^\ast\rangle,Y)\right]\right)\\
&+\eta\sqrt{n}\left(\left\Vert\beta^\ast+\frac{1}{\sqrt{n}}\bm{u}\right\Vert_q\mathbb{E}_{\mathbb{P}_n}\left[ \left\vert L^\prime(\langle\bm{X},\beta^\ast+\frac{1}{\sqrt{n}}\bm{u}\rangle,Y)\right\vert\right]-\left\Vert\beta^\ast\right\Vert_q\mathbb{E}_{\mathbb{P}_n}\left[ \left\vert L^\prime(\langle\bm{X},\beta^\ast\rangle,Y)\right\vert\right]\right)\\
&+\eta^2 \left(\frac{1}{2}\left\Vert\beta^\ast+\frac{1}{\sqrt{n}}\bm{u}\right\Vert_q^2 \mathbb{E}_{\mathbb{P}_n}\left[L^{\prime\prime}(\langle \bm{X},\beta^\ast+\frac{1}{\sqrt{n}}\bm{u}\rangle,Y)\right]-\frac{1}{2}\left\Vert\beta^\ast\right\Vert_q^2 \mathbb{E}_{\mathbb{P}_n}\left[L^{\prime\prime}(\langle \bm{X},\beta^\ast\rangle,Y)\right]\right)+o(1),
\end{aligned}
\end{equation}
as $n\to\infty$.

For the first term in \eqref{defofvfinte}, we have that
\begin{equation}\label{firstequation1}n\left( \mathbb{E}_{\mathbb{P}_n}\left[ L(\langle\bm{X},\beta^\ast+\frac{1}{\sqrt{n}}\bm{u}\rangle,Y)\right]-\mathbb{E}_{\mathbb{P}_n}\left[ L(\langle\bm{X},\beta^\ast\rangle,Y)\right]\right)\Rightarrow -\mathbf{G}^\top \bm{u}+\frac{1}{2}\bm{u}^\top H\bm{u}. \end{equation}

For the second term in  \eqref{defofvfinte}, we have that
\begin{equation}\label{decompose1}
\begin{aligned}
&\sqrt{n}\left(\left\Vert\beta^\ast+\frac{1}{\sqrt{n}}\bm{u}\right\Vert_q\mathbb{E}_{\mathbb{P}_n}\left[ \left\vert L^\prime(\langle\bm{X},\beta^\ast+\frac{1}{\sqrt{n}}\bm{u}\rangle,Y)\right\vert\right]-\left\Vert\beta^\ast\right\Vert_q\mathbb{E}_{\mathbb{P}_n}\left[ \left\vert L^\prime(\langle\bm{X},\beta^\ast\rangle,Y)\right\vert\right]\right)\\
=& \left(\nabla_{\beta}(\left\Vert\beta^\ast\right\Vert_q\mathbb{E}_{\mathbb{P}_n}\left[ \left\vert L^\prime(\langle\bm{X},\beta^\ast\rangle,Y)\right\vert\right])\right)^\top \bm{u}.
\end{aligned}
\end{equation}

For the third term in    \eqref{defofvfinte}, we have that
\begin{equation}\label{third1}
   \eta^2 \left(\frac{1}{2}\left\Vert\beta^\ast+\frac{1}{\sqrt{n}}\bm{u}\right\Vert_q^2 \mathbb{E}_{\mathbb{P}_n}\left[L^{\prime\prime}(\langle \bm{X},\beta^\ast+\frac{1}{\sqrt{n}}\bm{u}\rangle,Y)\right]-\frac{1}{2}\left\Vert\beta^\ast\right\Vert_q^2 \mathbb{E}_{\mathbb{P}_n}\left[L^{\prime\prime}(\langle \bm{X},\beta^\ast\rangle,Y)\right]\right)\to_p0,
\end{equation}
as $n\to\infty$.

It follows from \eqref{firstequation1}, \eqref{decompose1} and \eqref{third1} that 
\[ V_n(\bm{u})\Rightarrow V(\bm{u}),\]
as $n\to\infty$, where 
\begin{equation}
\begin{aligned}
V(\bm{u})=&(-\mathbf{G}+\eta\nabla_{\beta}(\left\Vert\beta^\ast\right\Vert_q\mathbb{E}_{\mathbb{P}_n}\left[ \left\vert L^\prime(\langle\bm{X},\beta^\ast\rangle,Y)\right\vert\right]))^\top \bm{u}+\frac{1}{2}\bm{u}^\top H\bm{u}.
\end{aligned}
\end{equation}

Then, we have that \[ \sqrt{n}\left( \beta^n-\beta^\ast\right)\Rightarrow H^{-1}\left(-\eta \nabla_{\beta}\left( \Vert\beta^\ast\Vert_q\mathbb{E}_{P_\ast}[\vert L^\prime (\langle \bm{X},\beta^\ast\rangle,Y)\vert]\right)+\mathbf{G}\right),\]
as $n\to\infty$.

When $0<\gamma<1/2$ and $\gamma>1/2$, the proof approach takes the same way, and we could obtain that 
        \begin{equation*} \sqrt{n}(\beta^n-\beta^\ast)\Rightarrow H^{-1}\mathbf{G},\quad \text{when    } \gamma>1/2,\end{equation*}
           \begin{equation*}n^{\gamma}(\beta^n-\beta^\ast)
        \Rightarrow-\eta H^{-1}\nabla_{\beta}\left( \Vert\beta^\ast\Vert_q\mathbb{E}_{P_\ast}[\vert L^\prime (\langle \bm{X},\beta\rangle,Y)\vert]\right),\quad \text{when    } 0<\gamma<1/2.\end{equation*}
\end{proof}
\end{document}